\documentclass[12pt]{amsart}

\usepackage{amsmath}
\usepackage[curve]{xypic}
\usepackage{caption}
\usepackage{xspace}
\usepackage{cite}
\usepackage{enumitem}
\usepackage{color}
\usepackage{url}
\usepackage[usenames,dvipsnames]{xcolor}
\usepackage{graphicx}

\makeatletter 
\def\@cite#1#2{{\m@th\upshape\bfseries%
[{#1\if@tempswa{\m@th\upshape\mdseries, #2}\fi}]}}
\makeatother 

\theoremstyle{plain}
\newtheorem{thm}{Theorem}[section]

\newtheorem{prop}[thm]{Proposition}
\newtheorem{lem}[thm]{Lemma}
\theoremstyle{definition}

\newtheorem{prob}[thm]{Problem}

\theoremstyle{remark}
\newtheorem{rem}[thm]{Remark}

\numberwithin{equation}{subsection}
\captionsetup{figurewithin=section}

\renewcommand{\bold}[1]{\medskip \noindent {\bf #1 }\nopagebreak}


\newcommand{\nc}{\newcommand}
\newcommand{\rnc}{\renewcommand}


\newcommand{\e}{\varepsilon}


\newcommand{\eb}[1]{\emph{\textbf{#1}}}

\nc\bA{\mathbb{A}}
\nc\bB{\mathbb{B}}
\nc\bC{\mathbb{C}}
\nc\bD{\mathbb{D}}
\nc\bE{\mathbb{E}}
\nc\bF{\mathbb{F}}
\nc\bG{\mathbb{G}}
\nc\bH{\mathbb{H}}
\nc\bI{\mathbb{I}}
\nc{\bJ}{\mathbb{J}}
\nc\bK{\mathbb{K}}
\nc\bL{\mathbb{L}}
\nc\bM{\mathbb{M}}
\nc\bN{\mathbb{N}}
\nc\bO{\mathbb{O}}
\nc\bP{\mathbb{P}}
\nc\bQ{\mathbb{Q}}
\nc\bR{\mathbb{R}}
\nc\bS{\mathbb{S}}
\nc\bT{\mathbb{T}}
\nc\bU{\mathbb{U}}
\nc\bV{\mathbb{V}}
\nc\bW{\mathbb{W}}
\nc\bY{\mathbb{Y}}
\nc\bX{\mathbb{X}}
\nc\bZ{\mathbb{Z}}
\nc\cA{\mathcal{A}}
\nc\cB{\mathcal{B}}
\nc\cC{\mathcal{C}}
\rnc\cD{\mathcal{D}}
\nc\cE{\mathcal{E}}
\nc\cF{\mathcal{F}}
\nc\cG{\mathcal{G}}
\rnc\cH{\mathcal{H}}
\nc\cI{\mathcal{I}}
\nc{\cJ}{\mathcal{J}}
\nc\cK{\mathcal{K}}
\rnc\cL{\mathcal{L}}
\nc\cM{\mathcal{M}}
\nc\cN{\mathcal{N}}
\nc\cO{\mathcal{O}}
\nc\cP{\mathcal{P}}
\nc\cQ{\mathcal{Q}}
\rnc\cR{\mathcal{R}}
\nc\cS{\mathcal{S}}
\nc\cT{\mathcal{T}}
\nc\cU{\mathcal{U}}
\nc\cV{\mathcal{V}}
\nc\cW{\mathcal{W}}
\nc\cY{\mathcal{Y}}
\nc\cX{\mathcal{X}}
\nc\cZ{\mathcal{Z}}

\nc\Gp{\Gamma_{\negmedspace\perp}}
\nc\oGp{\overline{\Gamma}_{\negmedspace\perp}}

\nc{\dmo}{\DeclareMathOperator}
\rnc{\Re}{\operatorname{Re}}
\rnc{\Im}{\operatorname{Im}}
\nc{\dM}{\overline{\cM}}
\nc{\spn}{\operatorname{span}} 
\dmo{\rank}{rank}
\dmo{\End}{End}
\dmo{\Jac}{Jac}
\dmo{\Id}{Id}
\dmo{\lcm}{lcm}
\dmo{\Aff}{Aff}
\dmo{\tr}{tr}
\dmo{\Aut}{Aut}


\nc{\good}{Hodge-Teichm\"uller\xspace}
\nc{\newh}{h}

\begin{document}

\title[Hodge-Teichm\"uller planes]{Hodge-Teichm\"uller planes and finiteness results for Teichm\"uller curves}
%
\author[C.Matheus]{Carlos~Matheus}
\address{Universit\'e Paris 13, Sorbonne Paris Cit\'e, LAGA, CNRS (UMR 7539), F-93430, Villetaneuse, France}
\email{matheus.cmss@gmail.com}
\author[A.Wright]{Alex~Wright}
\address{Math\ Department\\University of Chicago\\
5734 South University Avenue\\
Chicago, IL 60637}
\email{alexmwright@gmail.com}
%
\date{July 4, 2014.}

\begin{abstract}
We prove that there are only finitely many algebraically primitive Teichm\"uller curves in the minimal stratum in each prime genus at least 3. The proof is based on the study of certain special planes in the first cohomology of a translation surface which we call \good planes.

We also show that algebraically primitive Teichm\"uller curves are not dense in any connected component of any stratum in genus at least 3; the closure of the union of all such curves (in a fixed stratum) is equal to a finite union of affine invariant submanifolds with unlikely properties. Results of this type hold even without the assumption of algebraic primitivity.

Combined with work of Nguyen and the second author, a corollary of our results is that there are at most finitely many non-arithmetic Teichm\"uller curves in $\cH(4)^{hyp}$.
%
%
%
\end{abstract}

\maketitle
\thispagestyle{empty}



\section{Introduction}\label{s.introduction}


\bold{Context.} There is an analogy between the dynamics of the natural $SL(2,\mathbb{R})$--action on the moduli space of unit area translation surfaces and homogenous space dynamics. This analogy has been very fruitful in the study of rational billiards, interval exchange transformations,  and translation flows, as well as related problems in physics.  From this point of view, a key goal is to obtain a classification of $SL(2,\mathbb{R})$--orbit closures, in partial analogy with Ratner's Theorem for homogeneous spaces.

Quite a lot of progress has occurred on this problem. McMullen has classified orbit closures in genus 2 \cite{Mc5}, and Eskin-Mirzakhani-Mohammadi have proved that all orbit closures are \emph{affine invariant submanifolds} \cite{EM, EMM}. Progress on understanding affine invariant submanifolds is ongoing \cite{W4, W3}. Despite all this, a classification even just of closed orbits is not available in any genus greater than 2.

As is common practice, we will call closed $SL(2,\mathbb{R})$--orbits \emph{Teichm\"uller curves}. (The projections of these orbits to the moduli space of Riemann surfaces are also called Teichm\"uller curves; these projections are in fact complex algebraic curves which are isometrically immersed with respect to the Teichm\"uller metric.) Teichm\"uller curves yield examples of translation surfaces and billiards with optimal dynamical properties \cite{V}, as well as families of algebraic curves with extraordinary algebro-geometric properties \cite{M, M2, Mc3}.

As is the case with lattices in Lie groups, Teichm\"uller curves come in commensurability classes, and are either arithmetic or not. In the non-arithmetic case, every commensurability class contains a unique minimal representative, which is called primitive \cite{M2}. Teichm\"uller curves that are primitive for algebraic reasons are called algebraically primitive, and it is precisely these curves which are ``as non-arithmetic as possible."

There are infinitely many algebraically primitive Teichm\"uller curves in $\cH(2)$, which were constructed independently by Calta and McMullen \cite{Ca, Mc} and classified by McMullen \cite{McM:spin}. McMullen showed that there is only one algebraically primitive Teichm\"uller curve in $\cH(1,1)$ \cite{Mc4}. Finiteness of algebraically primitive Teichm\"uller curves is known in $\cH(g-1,g-1)^{hyp}$ by work of M\"oller \cite{M3}, and in $\cH(3,1)$ by work of Bainbridge-M\"oller \cite{BaM}. Bainbridge and M\"oller have informed the second author that together with Habegger they have recently established new finiteness results that are complementary to those in this paper. Their methods are very different from ours.


The assumption of \emph{algebraic} primitivity is made primarily for two reasons. Firstly, the key tools include  M\"oller's algebro-geometric results \cite{M, M2}, and these results are most restrictive in the algebraically primitive case.  Secondly, in some situations finiteness is not true for primitive but not algebraically primitive curves. For example, McMullen \cite{Mc2} has constructed infinitely many primitive Teichm\"uller curves in the minimal stratum in genus 3 and 4 (none of which are algebraically primitive).


\bold{Statement of results.}  Our main result is

\begin{thm}\label{T:main}
There are only finitely many algebraically primitive Teichm\"uller curves in the minimal stratum in prime genus at least 3. Furthermore, algebraically primitive Teichm\"uller curves are not dense in any connected component of any stratum in genus at least 3.
\end{thm}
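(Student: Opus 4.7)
The plan is to exploit a rigidity principle: every algebraically primitive Teichm\"uller curve carries an unusually large supply of the special planes alluded to in the abstract, and this is only possible on very thin loci in moduli space. I would take a \good plane on a translation surface $(X,\omega)$ to be a two-dimensional symplectic subspace $P \subset H^1(X,\bR)$ which is invariant under the Gauss-Manin connection along a local piece of the $SL(2,\bR)$-orbit and whose intersection with the holomorphic subspace $H^{1,0}(X)$ stays one-dimensional under that transport. Using M\"oller's real multiplication theorems, on a translation surface generating an algebraically primitive Teichm\"uller curve in genus $g$ the Kontsevich-Zorich cocycle splits as a direct sum of $g$ pairwise Galois-conjugate rank-two pieces, each of which produces such a plane. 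Hence every surface on such a curve carries at least $g$ \good planes: one tautological, spanned by $\Re \omega$ and $\Im \omega$, and $g-1$ ``extra'' planes coming from the nontrivial Galois conjugates.

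Next I would establish a closure and persistence statement: the property of being a \good plane is closed under limits of surfaces together with their planes, so by Eskin-Mirzakhani-Mohammadi any infinite collection of algebraically primitive Teichm\"uller curves accumulates onto a finite union of affine invariant submanifolds $\cM$, each of whose generic surface still supports at least $g$ \good planes. The task thus reduces to constraining affine invariant submanifolds that carry this many \good planes.

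The structural step is to combine the cylinder deformation framework with the \good condition. Shearing any horizontal cylinder is an infinitesimal deformation lying in $T\cM$, so if $P$ is a \good plane it must remain a \good plane after the deformation, which forces $P$ to be compatible with the horizontal cylinder decomposition and hence to behave like a flat piece of $H^1$. In the minimal stratum, the existence of a single zero makes cylinder combinatorics rigid, and the primality of $g$ precludes intermediate invariant symplectic subbundles coming from proper subfields of the trace field; I expect these together to force $\cM$ to have rank one and hence to itself be a Teichm\"uller curve. Since a single Teichm\"uller curve cannot contain infinitely many others, finiteness follows.

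For the non-density statement in genus $g \geq 3$, the same reduction shows that if algebraically primitive Teichm\"uller curves were dense in a connected component $\cC$ of a stratum, every surface in $\cC$ would carry at least $g \geq 3$ \good planes; I would then exhibit an open nonempty subset of $\cC$ whose surfaces admit no non-tautological \good plane, for instance by a Galois-theoretic genericity argument for the monodromy of the Kontsevich-Zorich cocycle together with the absence of real multiplication on a generic Jacobian. The main obstacle I anticipate is the structural step just above: turning the purely linear-algebraic fact ``many \good planes'' into the sharp geometric conclusion ``$\cM$ has rank one'' in the minimal stratum, and correctly using prime genus to rule out the intermediate possibilities that could otherwise leave room for a positive-dimensional family.
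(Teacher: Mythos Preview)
Your first two paragraphs are correct and match the paper: M\"oller's real multiplication gives $g$ orthogonal \good planes on every surface on an algebraically primitive Teichm\"uller curve, limits of \good planes are \good, and Eskin--Mirzakhani--Mohammadi reduces the problem to analyzing finitely many affine invariant submanifolds $\cM_i$ on each of which every surface carries $g$ orthogonal \good planes.

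The gap is your ``structural step.'' You propose to deduce directly from ``every surface in $\cM$ has $g$ \good planes'' that $\cM$ has rank one, via cylinder deformations. But the \good condition is a condition along the $SL(2,\bR)$-orbit of a \emph{fixed} surface, not along paths in $\cM$; there is no reason the \good planes at nearby surfaces in $\cM$ are related by Gauss--Manin transport (they need not assemble into a flat subbundle over $\cM$), so the interaction with cylinder shears you sketch does not get started. The paper does not attempt to extract structural information about $\cM$ from the \good planes. Instead it decouples the two halves of the argument. For non-density it proves Theorem~\ref{T:notmanygood}: in every component of every stratum in genus $g\ge 3$ there exists \emph{one} surface without even $g-1$ orthogonal \good planes, so no $\cM_i$ can be the whole component. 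This is done by an explicit monodromy computation on two specific square-tiled surfaces in $\cH(4)^{hyp}$ and $\cH(4)^{odd}$ showing $\oGp=Sp(4,\bR)$ (which forces the tautological plane to be the only \good plane there), followed by an inductive degeneration (collide zeros, then shrink a bubbled handle) to reach genus~$3$ from any higher stratum. For finiteness in the minimal stratum in prime genus, the paper simply quotes an independent result (Theorem~\ref{T:dense_in_stratum}, from \cite{W3}, proved via field-of-definition arguments and making no use of \good planes) that any affine invariant submanifold there which properly contains an algebraically primitive Teichm\"uller curve is the whole component; combined with non-density this forces each $\cM_i$ to be a single Teichm\"uller curve. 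Your ``generic KZ monodromy'' idea for non-density is in the right spirit, but making it rigorous requires exactly the kind of explicit Zariski-density computation the paper carries out for its base case.
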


A purely algebro-geometric reformulation of the first statement of Theorem \ref{T:main} is possible: the locus of all eigenforms in the projectivized minimal stratum in prime genus at least 3 is a countable set together with at most a finite number of algebraic curves.\footnote{{Teichm\"uller curves can be characterized by the torsion and real multiplication conditions discovered by M\"oller \cite{M, M2}. In particular, any curve in the projectivized minimal stratum, all of whose points $(X, \bC\cdot\omega)$ have the property that $\Jac(X)$ has real multiplication with $\omega$ as an eigenform, must be the projectivization of  a Teichm\"uller curve. Added in proof: A generalization of this unpublished observation of the second author has been very recently used by Filip, in his work showing that all $SL(2,\bR)$ orbit closures are varieties \cite{Fi, Fi2}.}} 

 A key new idea in the proof of Theorem \ref{T:main} is the study of \good planes. A \emph{\good plane} at a translation surface $M$ is a plane $P\subset H^1(M,\bR)$ such that for any $h\in SL(2,\bR)$, the plane\footnote{See Section \ref{S:HTdefn} for details.} $hP\subset H^1(hM, \bR)$ satisfies
$\dim_\bC (hP)^{1,0}=1$. It is then automatic that $\dim_\bC (hP)^{0,1}=1$. Here
$$(hP)^{1,0} = H^{1,0}(hM)\cap (hP\otimes \bC) \quad\text{and}\quad (hP)^{0,1} = H^{0,1}(hM)\cap (hP\otimes \bC).$$
\good planes are thusly named because they respect the Hodge decomposition along the entire Teichm\"uller disk of a translation surface. (The Teichm\"uller disk of a translation surface is the projection of the $SL(2,\bR)$--orbit to the moduli space of Riemann surfaces.) The consideration of these planes is motivated by M\"oller's description of the variation of Hodge structure over a Teichm\"uller curve \cite{M}.

\begin{thm}\label{T:manygood}
Suppose $\cM$ is an affine invariant submanifold that contains a dense set of Teichm\"uller curves whose trace fields have degree at least $k$. Then every translation surface in $\cM$ has $k$ orthogonal \good planes.

In particular, suppose $\cM$ is an affine invariant submanifold of genus $g$ translation surfaces, and algebraically primitive Teichm\"uller curves are dense in $\cM$. Then every translation surface in $\cM$ has $g$ orthogonal \good planes.
\end{thm}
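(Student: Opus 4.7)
The plan is to prove the theorem by a closedness-and-density argument: define $\cM_k \subset \cM$ to be the set of surfaces carrying $k$ mutually orthogonal \good planes, show $\cM_k$ is closed, show it contains every Teichm\"uller curve in $\cM$ whose trace field has degree at least $k$, and conclude $\cM_k = \cM$ from the density hypothesis.

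The input from Teichm\"uller curves is essentially M\"oller's theorem. If $C \subset \cM$ is a Teichm\"uller curve generated by $(X,\omega)$ with trace field $K$ of degree $r \geq k$, then real multiplication by $K$ on a sub-Hodge structure of $H^1(X,\bQ)$ produces an orthogonal decomposition with respect to the symplectic form
\[
H^1(X,\bR) = P_1 \oplus P_2 \oplus \cdots \oplus P_r \oplus W,
\]
where the $P_i$ are the real $2$-dimensional $\sigma_i$-eigenspaces of $K$ for the distinct embeddings $\sigma_i : K \hookrightarrow \bR$, each carrying a sub-VHS with $\dim_\bC P_i^{1,0} = 1$; one of the $P_i$ is the tautological plane $\spn\{\Re\omega, \Im\omega\}$ and the others are its Galois conjugates. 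Since each $P_i$ is flat for the Gauss-Manin connection along $C$, the condition $\dim_\bC(hP_i)^{1,0} = 1$ persists for every $h \in SL(2,\bR)$, so each $P_i$ is a \good plane.

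For closedness, take $M_n \to M_\infty$ in $\cM$ with $M_n$ carrying orthogonal \good planes $P_1^{(n)}, \dots, P_k^{(n)}$. Using a local Gauss-Manin trivialization of the cohomology local system near $M_\infty$, pass to a subsequence so that $P_i^{(n)}$ converges in the Grassmannian of $2$-planes to a plane $P_i^\infty \subset H^1(M_\infty, \bR)$; orthogonality persists by continuity of the symplectic form. Now fix $h \in SL(2,\bR)$. The flat transport along the $SL(2,\bR)$-action gives $hP_i^{(n)} \to hP_i^\infty$ at $hM_\infty$, and the Hodge filtration $H^{1,0}(hM_n) \to H^{1,0}(hM_\infty)$ varies continuously. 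The condition that a complex $2$-plane meets a complex $g$-plane in at least a complex line inside the $2g$-dimensional ambient space is a closed Schubert condition, so
\[
\dim_\bC (hP_i^\infty)^{1,0} \geq 1,
\]
with equality forced by reality of $P_i^\infty$ (its complex conjugate contributes an independent direction in $(hP_i^\infty)^{0,1}$). Since $h$ was arbitrary, $P_i^\infty$ is \good.

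Closedness combined with density yields $\cM_k = \cM$; the second assertion of the theorem is the specialization $k = g$, since algebraic primitivity is exactly the condition that the trace field has full degree $g$. I expect the closedness step to be the main obstacle, because the \good condition is quantified over all $h \in SL(2,\bR)$ and therefore forces the limiting plane to satisfy the Hodge condition along the entire Teichm\"uller disk of $M_\infty$. This works only because closedness is preserved under arbitrary intersection, so it suffices to handle each fixed $h$ individually, where the condition reduces to continuity of the Hodge filtration plus a closed Schubert condition on a Grassmannian; one must also take care to set up the local Gauss-Manin trivializations correctly to identify cohomology groups across the sequence.
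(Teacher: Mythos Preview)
Your proposal is correct and follows essentially the same approach as the paper: the paper splits the argument into the same two pieces, first using M\"oller's VHS decomposition to produce $k$ orthogonal \good planes on each Teichm\"uller curve with trace field of degree $\geq k$, and then proving that limits of \good planes are \good by fixing $h\in SL(2,\bR)$, observing that $hP_n\to hP$ (since the $SL(2,\bR)$--action is trivial on fibers of $H^1_\bR$), and invoking continuity of $H^{1,0}$ exactly as you do. The only cosmetic difference is that the paper phrases the second step as ``limits of \good planes are \good'' rather than ``$\cM_k$ is closed,'' and uses the Hodge inner product rather than the symplectic form for the continuity-of-orthogonality step; both pairings are continuous, so this is immaterial.
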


The orthogonality is either with respect to the Hodge inner product, or the symplectic form: for a \good plane, the orthogonal complement is the same for both. When we say that a translation surface ``has $k$ orthogonal \good planes" more precisely we mean that the translation surface has a set of $k$ pairwise orthogonal \good planes (there may also be more).

If $M=(X,\omega)$, then the tautological plane $P=\spn_\bR(\Re(\omega), \Im(\omega))$ is \good. In genus two, the orthogonal complement of this plane always gives a second \good plane. However,

\begin{thm}\label{T:notmanygood}
In any connected component of any stratum of unit area translation surfaces in genus $g\geq 3$, there is a translation surface which does not have $g-1$ orthogonal \good planes.
\end{thm}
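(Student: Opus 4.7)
The plan is to argue by contradiction: assume that every translation surface in a connected component $\cC$ of a stratum in genus $g\geq 3$ has $g-1$ orthogonal \good planes, and derive a contradiction. First I would upgrade this to a full orthogonal \good decomposition: if $P_1, \dots, P_{g-1}$ are mutually orthogonal \good planes at $M$, set $W = P_1\oplus\cdots\oplus P_{g-1}$; since each $P_i$ has $\dim_\bC (P_i)^{1,0} = 1$ and the Hodge decomposition is orthogonal for the Hodge inner product, $W^{1,0}$ has complex dimension $g-1$, and hence $\dim_\bC (W^\perp)^{1,0} = 1$. The same argument applies at every $hM$ because each $P_i$ is \good, so $W^\perp$ is itself a \good plane orthogonal to all the $P_i$. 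Thus the assumption becomes: every $M\in\cC$ admits a full orthogonal \good decomposition $H^1(M,\bR)=Q_1\oplus\cdots\oplus Q_g$.

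Next I would globalise. The \good condition is closed on the Grassmannian of $2$-planes, and a full orthogonal \good decomposition depends continuously (even real-analytically) on $M$, so after passing to a finite cover of $\cC$ that orders the summands these pointwise decompositions assemble into an $SL(2,\bR)$-equivariant continuous decomposition of the Hodge bundle over $\cC$ as a direct sum of $g$ flat orthogonal symplectic rank-$2$ subbundles, each with $1$-dimensional $(1,0)$-part. In particular, each summand is a $\pi_1^{\mathrm{orb}}(\cC)$-invariant subspace of $H^1(M,\bR)$.

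This is supposed to contradict the known structure of the Kontsevich-Zorich monodromy. For $g\geq 3$ the Zariski closure of the monodromy representation on the symplectic orthogonal complement of the tautological plane acts irreducibly on this $(2g-2)$-dimensional symplectic space on every connected component of every stratum, which precludes any continuous splitting of the complement into $g-1$ flat rank-$2$ subbundles.

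The main obstacle is this last step: uniformly establishing the required irreducibility of the Kontsevich-Zorich monodromy across every connected component of every stratum in genus $\geq 3$. If such a uniform monodromy statement is not available in the literature, the fallback is to exhibit an explicit surface in each stratum component (for example, a square-tiled surface with algebraically independent periods, or a surface with a carefully chosen cylinder decomposition) and rule out the existence of too many \good planes by a direct period computation, exploiting that each extra \good plane imposes nontrivial $SL(2,\bR)$-invariant linear conditions on the periods. Either route isolates the rigidity of the Hodge structure under the Teichm\"uller disk as the heart of the argument.
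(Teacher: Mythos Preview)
Your first step --- upgrading $g-1$ orthogonal \good planes to a full decomposition by adjoining $W^\perp$ --- is correct. The gap is in the globalisation. You assert that the pointwise decompositions assemble into a \emph{flat} splitting of $H^1_\bR$ over $\cC$, so that irreducibility of the Kontsevich--Zorich monodromy yields a contradiction. But the set of \good planes at $M$ is \emph{not} in general invariant under the monodromy of the stratum; it is invariant only under the Zariski closure of the image of the affine group $\Aff(M)$ in $Sp(H^1(M,\bR))$ (this is exactly Proposition~\ref{P:goodinv}). The \good condition on $P$ constrains the Hodge filtration along the $SL(2,\bR)$--orbit of $M$, and parallel transport of $P$ along a loop in the stratum that leaves this orbit has no reason to preserve that condition: the transported plane is $\phi_*P$ for a diffeomorphism $\phi$ that is typically not affine, and $\phi_*$ does not carry $H^{1,0}(hM)$ to itself. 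There is also no uniqueness of the decomposition at each $M$ (the \good locus is a possibly positive-dimensional subvariety of the Grassmannian), so even a continuous selection is not automatic. Consequently, irreducibility of the stratum monodromy --- even if you had it uniformly --- does not contradict the existence of a \good decomposition at every point.

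The paper's argument avoids stratum monodromy entirely. The base case (Theorem~\ref{T:basecase}) produces explicit square-tiled surfaces in each component of $\cH(4)$ for which the Zariski closure of $\Aff(M)$ acting on the orthogonal complement of the tautological plane is the full $Sp(4,\bR)$; since this group preserves the variety of \good planes and acts transitively on symplectic $2$-planes, only the tautological plane can be \good there. The inductive step (Theorem~\ref{T:induct}) then reduces any component to $\cH(4)$ by degenerations: first colliding zeros to reach the minimal stratum in the same genus, then shrinking a bubbled handle to drop the genus by one, using that limits of \good planes (even across the Deligne--Mumford boundary) remain \good. Your fallback of exhibiting a surface in each component is the right instinct, but the degeneration induction is precisely what makes this feasible without a case-by-case analysis in every stratum.
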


The final ingredients in the proof of Theorem \ref{T:main} are

\begin{thm}[Eskin-Mirzakhani-Mohammadi \cite{EMM}]\label{T:EMM}
Any closed $SL(2,\bR)$--invariant subset of a stratum is a finite union of affine invariant submanifolds.
\end{thm}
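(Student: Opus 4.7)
The plan is to decompose the result into three pieces: a measure classification, a topological rigidity result for orbit closures, and a Noetherian-type argument that upgrades the orbit-closure statement to arbitrary closed invariant subsets. First I would work in period coordinates, where each stratum is locally modeled on an open set in $H^1(M,\Sigma;\bC)$ with $SL(2,\bR)$ acting linearly on the $\bC$--factor, so that ``affine invariant submanifold'' has a concrete meaning as a real-linear subvariety (with rational coefficients in relative periods).

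The heart of the argument is a measure classification: every ergodic $P$--invariant probability measure on the stratum, where $P\subset SL(2,\bR)$ is the upper-triangular subgroup, is affine, meaning it is supported on an affine invariant submanifold and given locally by Lebesgue measure on a real-affine subspace in period coordinates. I would attack this via an exponential drift / polynomial divergence argument in the spirit of Ratner's work: take two generic points close to each other along the weak-stable direction of the Teichm\"uller geodesic flow, flow both under a long geodesic segment composed with a unipotent time change, and extract from the limiting displacement a new direction of invariance for the measure. A quantitative non-divergence estimate (playing the role of Dani--Margulis in the homogeneous setting) ensures the relevant orbits do not escape to infinity, so such limits really exist. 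Iterating produces enough invariance to force the measure to be affine; this is the measure-rigidity content of Eskin--Mirzakhani.

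The second step promotes measures to orbits: equidistribution of long averages along $P$--orbits shows that every $P$--orbit closure is an affine invariant submanifold, and an averaging over circles yields the same for $SL(2,\bR)$--orbit closures. For the finiteness clause, the key auxiliary fact is that any stratum contains only countably many affine invariant submanifolds, because each is locally cut out by real-linear equations whose coefficients lie in a countable set determined by period ratios. Given a closed $SL(2,\bR)$--invariant subset $K$, I would select a point whose orbit closure $\cN_1 \subset K$ has maximal dimension, replace $K$ by $K \setminus \cN_1$, and iterate. Termination follows from dimension bookkeeping together with the fact that a Hausdorff limit of affine invariant submanifolds of bounded dimension is contained in a strictly larger affine invariant submanifold, which prevents any infinite accumulation.

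The decisive obstacle is the drift/divergence step in the measure classification. In the classical Ratner setting, the drift directions are furnished for free by a transitive group action on the homogeneous space; here there is no such group, and the putative new invariance directions have to be produced purely from the Teichm\"uller flow and then shown to be compatible with the integer-affine structure of period coordinates across mapping-class-group chart transitions. Controlling cancellation coming from this non-homogeneous structure, and upgrading an almost-invariance direction to a genuine linear invariance of the measure, is the principal technical difficulty and is what makes this theorem substantially harder than its homogeneous ancestors.
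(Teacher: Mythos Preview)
This theorem is not proved in the present paper at all: it is simply quoted from \cite{EMM} (building on \cite{EM}) and used as a black box in the proof of Theorem~\ref{T:main}. So there is no ``paper's own proof'' to compare against.

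Your outline is a reasonable high-level sketch of the actual Eskin--Mirzakhani and Eskin--Mirzakhani--Mohammadi strategy: measure classification via an exponential drift argument adapted from Benoist--Quint and Ratner-type ideas, then equidistribution and orbit-closure rigidity, then a descending-dimension argument for arbitrary closed invariant sets. You also correctly identify the main difficulty as producing and controlling the extra invariance directions in a non-homogeneous setting. That said, what you have written is not a proof but a roadmap; the actual execution occupies several hundred pages, and many of the intermediate steps (entropy comparison, the role of the Kontsevich--Zorich cocycle and its semisimplicity, the precise mechanism for upgrading almost-invariance to invariance, the ``Margulis function'' non-divergence estimates) are each substantial. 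For the purposes of this paper, though, nothing more than the citation is required.
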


\begin{thm}[Wright \cite{W3}]\label{T:dense_in_stratum}
Let $m\geq 2$ be prime. Any affine invariant submanifold of the minimal stratum in genus $m$ which properly contains an algebraically primitive Teichm\"uller curve must be equal to a connected component of the stratum.
\end{thm}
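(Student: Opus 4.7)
The plan is to combine Wright's cylinder-deformation field-of-definition machinery with M\"oller's Hodge-theoretic description of algebraically primitive Teichm\"uller curves. Let $C \subsetneq \cM \subset \cH(2m-2)$ with $C$ algebraically primitive of trace field $K$, so $[K:\bQ] = m$.

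First I would pin down the field of definition $\bk(\cM)$. By Wright's cylinder-deformation theorem, $\bk(\cM)$ embeds into the trace field of every surface in $\cM$, so $\bk(\cM) \subset K$; since $[K:\bQ] = m$ is prime, $\bk(\cM)$ is either $\bQ$ or $K$. In the minimal stratum there are no relative periods, whence $\dim_\bC \cM = 2\rank(\cM)$, and combined with the inequality $\rank(\cM) \cdot [\bk(\cM):\bQ] \leq g = m$ the case $\bk(\cM) = K$ would force $\rank(\cM) = 1$ and $\dim_\bC \cM = 2 = \dim_\bC C$, contradicting $\cM \supsetneq C$. Hence $\bk(\cM) = \bQ$.

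With $\bk(\cM) = \bQ$ the tangent space $T\cM \subset H^1(X, \bC)$ at any $(X, \omega) \in C$ equals $V \otimes_\bQ \bC$ for a $\bQ$-subspace $V \subset H^1(X, \bQ)$, and the inclusion $T_X C \subset T_X \cM$ places the tautological plane $L_{\mathrm{id}} = \spn_\bC(\omega, \bar\omega)$ inside $V \otimes \bC$. By M\"oller, the real multiplication of $K$ on $\Jac(X)$ makes $H^1(X, \bQ)$ a rank-two $K$-module; identifying $H^1(X, \bQ) \cong K^2$, the real tautological eigenspace $V_{\mathrm{id}} = L_{\mathrm{id}} \cap H^1(X, \bR)$ is the $\sigma_{\mathrm{id}}$-summand of $K^2 \otimes \bR \cong \bigoplus_\sigma \bR^2$. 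Any $\bQ$-linear functional on $K^2$ has the form $(v_1, v_2) \mapsto \tr_{K/\bQ}(c_1 v_1 + c_2 v_2)$; extending to $\bR$ and restricting to $V_{\mathrm{id}}$ reduces it to $(a, b) \mapsto c_1 a + c_2 b$, which vanishes identically iff $c_1 = c_2 = 0$. So no non-zero $\bQ$-functional vanishes on any $V$ with $V \otimes \bR \supset V_{\mathrm{id}}$, forcing $V = H^1(X, \bQ)$. This yields $\rank(\cM) = m$ and $\dim_\bC \cM = 2m$, so $\cM$ is a connected component of $\cH(2m-2)$.

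The main obstacle I anticipate is establishing Wright's field-of-definition theorem and the rank-field inequality in the required form; both rest on a careful analysis of cylinder deformations on surfaces in $\cM$, and the rank inequality is saturated by algebraically primitive Teichm\"uller curves themselves, so the argument must just barely distinguish the two cases above. By contrast, once those inputs are in hand, the Hodge-theoretic step reduces to the clean trace-functional computation sketched above.
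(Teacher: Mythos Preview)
This theorem is not proved in the present paper; it is quoted as an input from \cite{W3} and used as a black box in the proof of Theorem~\ref{T:main}. So there is no ``paper's own proof'' to compare against here.

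That said, your proposal accurately reconstructs the argument of \cite{W3}. The dichotomy $\bk(\cM)\in\{\bQ,K\}$ from primality of $m$, the rank--field inequality $\rank(\cM)\cdot[\bk(\cM):\bQ]\le g$ coming from symplectic orthogonality of the Galois conjugates of $p(T\cM)$, the absence of relative periods in the minimal stratum forcing $\dim_\bC\cM=2\rank(\cM)$, and the trace-pairing computation showing that no proper $\bQ$-subspace of $H^1(X,\bQ)\cong K^2$ can contain the real tautological eigenspace --- these are exactly Wright's steps. One small imprecision: the containment $\bk(\cM)\subset K$ is not literally ``$\bk(\cM)$ embeds into the trace field of every surface in $\cM$,'' but rather follows from the monotonicity $\bk(\cM)\subset\bk(\cN)$ for $\cN\subset\cM$ together with $\bk(C)=K$; the conclusion is the same. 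Your self-identified obstacle is well placed: the cylinder-deformation machinery establishing the field-of-definition theorem and the rank inequality is the real content of \cite{W3}, and the deduction you sketch from those results is short and correct.
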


\begin{proof}[\eb{Proof of Theorem \ref{T:main} using Theorems \ref{T:manygood}, \ref{T:notmanygood}, \ref{T:EMM} and \ref{T:dense_in_stratum}.}]
Let $\cH$ be a connected component of a stratum of genus $g\geq 3$ unit area translation surfaces. By Theorem \ref{T:EMM}, the closure of the union of all algebraically primitive Teichm\"uller curves in $\cH$ is a finite union of affine invariant submanifolds $\cM_{i}, i=1,\ldots,\ell$.

By Theorem \ref{T:manygood}, every translation surface in each $\cM_i$ must have $g$ orthogonal \good planes. Thus Theorem \ref{T:notmanygood} gives that no $\cM_i$ can be equal to $\cH$. In particular, algebraically primitive Teichm\"uller curves are not dense in $\cH$.

Now suppose $\cH$ is a connected component of the minimal stratum in prime genus. Then Theorem \ref{T:dense_in_stratum} gives that each $\cM_i$ must be a Teichm\"uller curve. In particular, there are only finitely many algebraically primitive Teichm\"uller curves in $\cH$.
\end{proof}

\bold{Additional results.}
In recent work, Nguyen and the second author have shown that there are no affine invariant submanifolds in $\cH(4)^{hyp}$, except Teichm\"uller curves and $\cH(4)^{hyp}$ itself \cite{NW}. From this we get

\begin{thm}[Matheus-Nguyen-Wright]\label{th:fin}
There are at most finitely many non-arithmetic Teichm\"uller curves in $\cH(4)^{hyp}$.
\end{thm}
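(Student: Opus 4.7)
The plan is to mimic the structure of the proof of Theorem \ref{T:main} just given, substituting the Nguyen-Wright classification for Theorem \ref{T:dense_in_stratum} and invoking the $k=2$ case of Theorem \ref{T:manygood} in place of the algebraically primitive (i.e.\ $k=g$) case. Concretely, by Theorem \ref{T:EMM} the closure in $\cH(4)^{hyp}$ of the union of all non-arithmetic Teichm\"uller curves is a finite union $\cM_1\cup\cdots\cup\cM_\ell$ of affine invariant submanifolds, and by the Nguyen-Wright classification \cite{NW} each $\cM_i$ is either a Teichm\"uller curve or all of $\cH(4)^{hyp}$. Once I rule out the latter possibility for every $i$, each $\cM_i$ will be a Teichm\"uller curve, and every non-arithmetic Teichm\"uller curve in $\cH(4)^{hyp}$ will have to coincide with some $\cM_i$, forcing their total number to be at most $\ell<\infty$.

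The remaining task is therefore to show that non-arithmetic Teichm\"uller curves are not dense in $\cH(4)^{hyp}$. Suppose for contradiction that they are. A non-arithmetic Teichm\"uller curve has trace field of degree at least $2$, so Theorem \ref{T:manygood} applied with $k=2$ forces every translation surface in $\cH(4)^{hyp}$ to carry $2$ orthogonal \good planes. But $\cH(4)^{hyp}$ is a connected component of a stratum in genus $g=3$, so Theorem \ref{T:notmanygood} produces a surface in $\cH(4)^{hyp}$ which does \emph{not} carry $g-1=2$ orthogonal \good planes, a contradiction.

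I do not anticipate a serious obstacle: all of the heavy lifting is already done in the results quoted in the excerpt, namely \cite{NW}, Theorem \ref{T:EMM}, and Theorems \ref{T:manygood} and \ref{T:notmanygood}. The one delicate numerical point, and the reason the argument works in $\cH(4)^{hyp}$ specifically, is the exact match between the input to Theorem \ref{T:manygood} available from mere non-arithmeticity ($k=2$) and the threshold at which Theorem \ref{T:notmanygood} produces an obstructing surface in genus $g=3$ ($g-1=2$); it is precisely this coincidence that lets one upgrade the conclusion from ``algebraically primitive'' to ``non-arithmetic'' in this stratum.
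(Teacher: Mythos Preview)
Your proposal is correct and follows essentially the same approach as the paper's own proof: apply Theorem~\ref{T:EMM} to get finitely many $\cM_i$, invoke \cite{NW} to force any non-closed $\cM_i$ to equal $\cH(4)^{hyp}$, and then use Theorem~\ref{T:manygood} with $k=2$ (from non-arithmeticity) together with Theorem~\ref{T:notmanygood} in genus $3$ to rule this out. The only difference is cosmetic: you make explicit the step that non-arithmeticity forces trace field degree at least $2$, and you highlight the numerical coincidence $k=2=g-1$, which the paper leaves implicit.
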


Theorem \ref{th:fin} is false in $\cH(4)^{\rm odd}$ by work of McMullen \cite{Mc2}. There are at present two known non-arithmetic Teichm\"uller curves in $\cH(4)^{\rm hyp}$, corresponding to the regular $7$-gon and $12$-gon. These examples are due to Veech \cite{V}. The first is algebraically primitive, and the second is not. Bainbridge and M\"oller  have conjectured that the regular $7$-gon is the only algebraically primitive Teichm\"uller curve in $\cH(4)^{\rm hyp}$ \cite[Ex. 14.4]{BaM}.

\begin{proof}[\eb{Proof using Theorems \ref{T:manygood}, \ref{T:notmanygood}, and \ref{T:EMM}.}]
By Theorem \ref{T:EMM}, the closure of the union of all non-arithmetic  Teichm\"uller curves in $\cH(4)^{hyp}$ is a finite union of affine invariant submanifolds $\cM_{i}, i=1,\ldots,\ell$. As above, it suffices to rule out the case that some $\cM_i$ properly contains a Teichm\"uller curve.

If this were the case, the work of Nguyen-Wright \cite{NW} would give that $\cM_i=\cH(4)^{hyp}$. Theorem \ref{T:manygood} would thus give that each translation surface in $\cH(4)^{hyp}$ has 2 orthogonal \good planes, contradicting Theorem \ref{T:notmanygood}.
\end{proof}

Added in proof: recently a similar argument has been used to give results in the other component of $\cH(4)$ \cite{ANW}.


\bold{More on the proofs.} We will see that the work of M\"oller \cite{M} shows that any translation surface on a Teichm\"uller curve has at least $k$ orthogonal \good planes, where $k$ is the degree of the trace field. Theorem \ref{T:manygood} will then follow from the observation that the limit of \good planes is a \good plane.

The proof of Theorem \ref{T:notmanygood} will be an induction on the genus and number of zeros. The base case is

\begin{thm}\label{T:basecase}
In both connected components $\cH(4)^{hyp}$ and $\cH(4)^{odd}$ of the minimal stratum in genus 3, there are translation surfaces that do not have 2 orthogonal \good planes.
\end{thm}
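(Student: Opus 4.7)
\bold{Proof proposal.} The plan is to exhibit, in each component of the minimal stratum in genus $3$, an explicit square-tiled (Veech) surface $M$ at which the $4$-dimensional symplectic orthogonal complement $V \subset H^1(M,\bR)$ of the tautological plane contains no \good plane. Since the tautological plane $\spn_\bR(\Re\omega,\Im\omega)$ is automatically \good, producing one such $M$ in each component proves the theorem.

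The first step is to translate the absence of a \good plane in $V$ into a representation-theoretic condition on the Veech group $\Gamma$ of $M$. Over the Teichm\"uller curve $SL(2,\bR)\cdot M$ the fibers of $V$ form a flat rank-$4$ local system whose monodromy is the image of $\Gamma$ in $Sp(V)$ under the Kontsevich-Zorich cocycle. A \good plane $P \subset V$ would, via Gauss-Manin transport, yield a rank-$2$ flat subbundle of this local system carrying a weight-$1$ sub-variation of Hodge structure; the fiber of this subbundle at $M$ is then a $\Gamma$-invariant $2$-plane in $V$. Hence it suffices to find, in both $\cH(4)^{hyp}$ and $\cH(4)^{odd}$, a Veech surface for which the $\Gamma$-action on $V$ preserves no $2$-plane.

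For the construction we enumerate origamis with a small number of squares in each component. For each candidate, the Veech group generators (read off from the $SL(2,\bZ)$-action on the origami) and their induced symplectic action on $V \cap H^1(M,\bQ) \cong \bQ^4$ are computable directly from the combinatorics of the tiling. Irreducibility of the resulting $\Gamma$-representation on $V$ is then certified by exhibiting an element $\gamma \in \Gamma$ whose characteristic polynomial on $V$ is irreducible over $\bQ$ together with a second generator not preserving the $\gamma$-eigenspace decomposition; equivalently, one checks Zariski density of the monodromy in $Sp(V) \cong Sp(4,\bR)$. In $\cH(4)^{hyp}$ the hyperelliptic involution acts as $-\Id$ on $H^1$ and is therefore a central element of the monodromy, so irreducibility should be tested in $PSp(V)$.

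The main obstacle is locating origamis small enough for a transparent computation yet free of extra affine symmetries (beyond the hyperelliptic involution in $\cH(4)^{hyp}$) that would produce a $\Gamma$-invariant splitting of $V$. Very small origamis often carry such symmetries, so some enumeration and filtering is needed. Once a suitable origami is identified, the irreducibility check reduces to a finite computation with explicit integer symplectic $4\times 4$ matrices, entirely parallel to calculations already carried out for Kontsevich-Zorich cocycles over origamis in $\cH(4)$ (in the style of Matheus-Yoccoz-Zmiaikou and Eskin-Kontsevich-Zorich), from which candidate surfaces and techniques can be drawn.
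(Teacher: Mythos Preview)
Your overall strategy --- find a square-tiled surface in each component and study the Veech group action on $V=H^1_\perp$ --- matches the paper. But the logical core of your argument has a gap.

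You claim: ``A \good plane $P \subset V$ would, via Gauss-Manin transport, yield a rank-$2$ flat subbundle \ldots\ the fiber of this subbundle at $M$ is then a $\Gamma$-invariant $2$-plane in $V$.'' This is not justified, and in fact is not how \good planes behave. The \good condition only says that the Gauss-Manin transport of $P$ along the Teichm\"uller \emph{disk} has a $(1,0)$-part at every point; it says nothing about what happens when you transport $P$ around a loop in the Teichm\"uller \emph{curve}. Transporting $P$ around $\gamma\in\Gamma$ gives $\gamma(P)$, which is again a \good plane (this is the content of the paper's Proposition~\ref{P:goodinv}), but there is no reason $\gamma(P)=P$. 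So a \good plane need not be $\Gamma$-invariant, and irreducibility of the $\Gamma$-module $V$ does not by itself rule out \good planes in $V$. Your ``equivalently, one checks Zariski density'' is also not an equivalence: Zariski density in $Sp(4,\bR)$ is strictly stronger than irreducibility over $\bQ$.

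The paper's route around this is to observe that the \emph{set} of \good planes in $H^1(M,\bR)$ is a projective subvariety of the Grassmannian, stable under $\Gamma$ and hence under its Zariski closure $\overline{\Gamma}$. For specific origamis $M_\ast\in\cH(4)^{odd}$ and $M_{\ast\ast}\in\cH(4)^{hyp}$ the paper computes three Dehn multitwists and checks, by a direct Lie-algebra dimension count, that $\overline{\Gamma}_{\!\perp}=Sp(4,\bR)$. Since $Sp(4,\bR)$ acts transitively on symplectic $2$-planes in $V$ and a generic symplectic plane fails the Hodge condition already at $M$, no \good plane can lie in $V$. Thus the paper genuinely needs Zariski density (transitivity on planes), not merely irreducibility. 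If you upgrade your computation from ``irreducible characteristic polynomial plus one more generator'' to an actual Zariski-density check, your outline becomes the paper's proof; as written, the reduction to irreducibility rests on a false premise.
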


Given a translation surface $M$, we will consider the action of its affine group on $H^1(M,\bR)$. We will show that the Zariski closure of the image of this action preserves the set of \good planes. Theorem \ref{T:basecase} will then be established by exhibiting specific square-tiled surfaces $M$ in $\cH(4)^{hyp}$ and $\cH(4)^{odd}$ for which the Zariski closure is large enough to imply that there are no \good planes besides the tautological plane.

The inductive step of the proof of Theorem \ref{T:notmanygood} is

\begin{thm}\label{T:induct}
Let $\cH$ be a connected component of a stratum of genus $g\geq 3$ translation surfaces. Suppose  that every translation surface in $\cH$ has $k$ orthogonal  \good planes.

Then there is a connected component $\cH'$ of the minimal stratum of genus $g$ translation surfaces in which every translation surface has at least $k$ orthogonal \good planes.

If $\cH$ is already a connected component of the minimal stratum $\cH(2g-2)$, then there is a connected component $\cH'$ of $\cH(2g-4)$ in which every translation surface has at least $k-1$ orthogonal \good planes.
\end{thm}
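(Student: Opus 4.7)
My overall strategy is \textbf{degeneration to the boundary}: I would produce surfaces in $\cH'$ as limits of surfaces in $\cH$ and argue that the \good plane property descends to the limit, at the cost of at most one plane when a cycle pinches. The closedness of the defining condition $\dim_\bC (hP)^{1,0}=1$ (for every $h\in SL(2,\bR)$) combined with compactness of the Grassmannian of $2$-planes lets me pass \good planes to sublimits, while $SL(2,\bR)$-invariance of the property, combined with the existence of dense orbits in stratum components (from Eskin--Mirzakhani--Mohammadi), lets me promote ``on a dense/open subset'' to ``on all of $\cH'$''.

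\textbf{Step 1 (collision of zeros, same genus).} For $\cH\subset\cH(\kappa_1,\ldots,\kappa_n)$ with $n\geq 2$, I would iteratively collide pairs of zeros to reduce to a component of the minimal stratum $\cH(2g-2)$ in genus $g$. Along such a degeneration no cycle is pinched, so the underlying Riemann surface and $H^1(M,\bR)$ deform smoothly; the principal boundary picture ensures that an open piece of some target component $\cH''\subset\cH(\kappa_1,\ldots,\kappa_{n-1}+\kappa_n)$ lies in the closure of $\cH$. Passing to limits of $k$ orthogonal \good planes on nearby surfaces gives $k$ orthogonal \good planes on each surface in this open piece, and then on all of $\cH''$ by the density/closedness argument above. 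Iterating the collisions reaches the desired $\cH'$ in the minimal stratum.

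\textbf{Step 2 (pinching a cycle, $\cH(2g-2)\to\cH(2g-4)$), and main obstacle.} Here the genus drops by one via pinching a non-separating loop at the unique zero; topologically one loses a $2$-dimensional symplectic subspace $V$ of $H^1$ spanned by the vanishing cycle and its symplectic dual, so that $H^1(M_n)\cong H^1(M_\infty)\oplus V$ near the boundary. Of the $k$ orthogonal \good planes $P_1,\ldots,P_k$ on a nearby surface, at most one can be ``trapped'' in $V$ (since distinct $P_i$ are symplectically orthogonal and $V$ is itself a $2$-dimensional symplectic subspace), and for the others I expect the projection $P_i\to H^1(M_\infty)$ to yield a $2$-dimensional symplectic subspace that is \good in the limit, with pairwise orthogonality inherited. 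The existence of a target component $\cH'\subset\cH(2g-4)$ approached from $\cH$ follows from the principal boundary theory for non-separating saddle connections. I expect the main obstacle to be the Hodge-theoretic analysis: verifying rigorously that (i) the projected planes remain symplectic rather than becoming isotropic, (ii) the limit preserves the Hodge condition $\dim_\bC (hP)^{1,0}=1$ rather than dropping to $0$ or jumping to $2$ under the degeneration of the period matrix, and (iii) the combinatorics of which $P_i$ is absorbed by $V$ yield exactly $k-1$ survivors. For these I would rely on a careful analysis of the variation of Hodge structure along the degeneration, in the spirit of M\"oller's work already invoked elsewhere in the paper.
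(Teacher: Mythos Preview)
Your Step 1 is essentially the paper's argument, though you add an unnecessary layer: the paper invokes a result of Kontsevich--Zorich that some connected component $\cH'$ of the \emph{minimal} stratum lies in the closure of $\cH$, so \emph{every} $M\in\cH'$ is a limit of surfaces in $\cH$ and Proposition~\ref{P:limit_is_good} applies directly. Your density/EMM maneuver (incidentally, dense orbits come from Masur--Veech ergodicity, not EMM) would work but is not needed.

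Your Step 2, however, has a genuine gap in the choice of degeneration. You propose pinching a \emph{non-separating} loop. For such a degeneration: (a) the limiting stable differential on the normalization (a genus $g-1$ curve) acquires \emph{simple poles} at the two preimages of the node, so the limit is not a point of $\cH(2g-4)$ at all; (b) only the class of the vanishing cycle goes to zero---its symplectic dual survives and is acted on by unipotent monodromy (Picard--Lefschetz), so your ``$2$-dimensional symplectic $V$'' description is incorrect; and (c) the limiting Hodge structure is genuinely \emph{mixed}, which makes your items (i)--(ii) far harder than you suggest.

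The paper avoids all of this by running the Kontsevich--Zorich ``bubbling a handle'' construction in reverse. Given $(Y,\omega)\in\cH'\subset\cH(2g-4)$, one bubbles a square handle to obtain $(X_n,\omega_n)\in\cH$, with the handle size $\varepsilon_n\to 0$. The Deligne--Mumford limit is then $X_\infty=Y\cup_{\text{node}}T$ with $T$ a torus and the node \emph{separating}; the limiting differential is $\omega$ on $Y$ and $0$ on $T$. Now $H^1(X_\infty,\bR)=H^1(Y,\bR)\oplus H^1(T,\bR)$ with a \emph{pure} Hodge structure, $\ker(\iota^*)=H^1(T,\bR)$ is exactly $2$-dimensional, and the continuity of $H^{1,0}$ over the geometric-genus-$g$ locus gives the analogue of Proposition~\ref{P:limit_is_good} cleanly. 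At most one of the $k$ orthogonal limit planes can sit inside $\ker(\iota^*)$, yielding $k-1$ orthogonal \good planes on $(Y,\omega)$. The crucial point you are missing is that the degeneration must be arranged to be \emph{separating} so that the limit is a genuine holomorphic form on a smooth genus $g-1$ component.
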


The proof of Theorem \ref{T:induct} uses results developed by Kontsevich-Zorich for the classification of connected components of strata \cite{KZ}. Specifically, we will consider degenerations of translation surfaces $M_n\to M$ that arise from opening up a zero on $M$ or bubbling off a handle on $M$, and we will show that the limit of \good planes on $M_n$ is a \good plane on $M$.

\begin{proof}[\eb{Proof of Theorem \ref{T:notmanygood} using Theorems \ref{T:basecase} and \ref{T:induct}.}]
Suppose, in order to find a contradiction, that there is a connected component of a stratum of genus $g \geq 3$ translation surfaces in which every translation surface has $g-1$ orthogonal \good planes. By applying Theorem \ref{T:induct} a number of times, we eventually see that a connected component of $\cH(4)$ must consist entirely of translation surface with $2$ orthogonal \good planes. However, this contradicts Theorem \ref{T:basecase}.
\end{proof}


\bold{References.} For a flat geometry perspective on translation surfaces, see for example \cite{GJ}, \cite{HS2}, and \cite{SW}.

There are only very few examples of primitive Teichm\"uller curves known. They are the Prym curves in genus 2, 3 and 4 \cite{Mc, Mc4} (see also \cite{Ca} for the genus 2 case); the Veech-Ward-Bouw-M\"oller curves \cite{BM}; and two sporadic examples, one due to Vorobets in $\cH(6)$, and another due to Kenyon-Smillie in $\cH(1,3)$ \cite{HS, KS}. These sporadic examples correspond to billiards in the  $(\pi/5, \pi/3, 7\pi/15)$ and $(2\pi/9, \pi/3, 4\pi/9)$ triangles respectively, and both Teichm\"uller curves are algebraically primitive.

See \cite{W2} for an introduction to the Veech-Ward-Bouw-M\"oller curves and more background on Teichm\"uller curves, and Hooper for a flat geometry perspective \cite{H}.

The (primitive) Teichm\"uller curves in $\cH(2)$ have been extensively studied by McMullen \cite{McM:spin}, Bainbridge \cite{Ba}, and Mukamel \cite{Mu:orb, Mu:alg}. The Prym curves are not quite as well understood but have also been extensively studied by M\"oller \cite{Mo:prym} and Lanneau-Nguyen \cite{LN:prym}.

McMullen has shown that Teichm\"uller curves in moduli space are rigid \cite{Mc3}.


\bold{Organization.} In Section \ref{s.definitions}, we recall some basic material on translation surfaces and their moduli spaces. In particular, the notions of Teichm\"uller curves, affine invariant submanifolds and the Hodge inner product are quickly reviewed. In this section we give the precise definition of Hodge-Teichm\"uller planes.

In Section \ref{s.HTplanes}, we prove Theorem \ref{T:main}. In Section \ref{s.4}, we prove Theorem \ref{T:basecase}. In Section \ref{s.induct} we prove Theorem \ref{T:induct} modulo some technical results, which are proved in Section \ref{S:conv}.

We conclude the paper in Section \ref{S:open} by listing a few open problems that may now be accessible.


\bold{Acknowledgements.} The authors are grateful to Pascal Hubert, Erwan Lanneau, Jean-Christophe Yoccoz and Anton Zorich for organizing the excellent summer school ``Algebraic Geometry'' (from September 24 to 28, 2012) in Roscoff, France, where the authors began their collaboration. The authors  also thank Matt Bainbridge, Alex Eskin, Howard Masur, and Anton Zorich for useful conversations, and  Matt Bainbridge, Curtis McMullen for helpful comments on an earlier version of this paper.

We thank Anton Zorich for permission to reproduce figures \ref{F:split} and \ref{F:handle} from \cite{EMZ} and \cite{KZ}.

We thank the two referees for very helpful comments resulting in significant improvements to this paper. We thank Ronen Mukamel and Paul Apisa for helpful comments on Section 2.3. 

The first author was partially supported by the French ANR grant ``GeoDyM'' (ANR-11-BS01-0004) and by the Balzan Research Project of
J. Palis.


\section{Definitions}\label{s.definitions}

In this section we define all the terms used in the introduction, and provide some basic background. Additional background material will be recalled as necessary in the subsequent sections.


\subsection{Abelian differentials and translation surfaces.} Let $X$ be a Riemann surface of genus $g\geq 1$. A (non-trivial) Abelian differential $\omega$ on $X$ is simply a (non-zero) holomorphic $1$-form. Given an Abelian differential $\omega$ on a Riemann surface $X$, denote by $\Sigma$ the finite set of zeroes of $\omega$. By locally integrating $\omega$ outside $\Sigma$, we obtain a translation atlas on $X-\Sigma$, i.e., an atlas whose change of coordinates are translations of the plane $\mathbb{R}^2$. From the point of view of the translation atlas, a zero $p$ of $\omega$ is a conical singularity with total angle $2\pi (k+1)$ where $k$ is the order of zero $p$. Conversely, given a translation atlas with conical singularities as above, we recover an Abelian differential by pulling back the form $dz=dx+i dy$ on $\mathbb{R}^2$ via the charts of the translation atlas. In other words, there is a one-to-one correspondence between Abelian differentials and translation surfaces.

Note that $SL(2,\mathbb{R})$ acts on the set of translation surfaces by post-composition with the charts of translation atlas. That is, $SL(2,\mathbb{R})$ acts naturally on Abelian differentials. Moreover, this $SL(2,\mathbb{R})$-action on Abelian differential preserves the total area function $\textrm{Area}(\omega):=\frac{i}{2}\int_X \omega\wedge\overline{\omega}$.

We will typically denote a translation surface by $M=(X,\omega)$.


\subsection{Moduli spaces of Abelian differentials}  Let $\cH_g$ denote the moduli space of Abelian differentials $\omega$ with unit total area (i.e., $\textrm{Area}(\omega)=1$) on Riemann surfaces $X$ of genus $g\geq 1$. It is well known that $\cH_g$ is naturally stratified, where the strata $\cH(k_1,\dots,k_{\sigma})$ are obtained by collecting together Abelian differentials $(X,\omega)\in\cH_g$ whose orders of zeros is prescribed by the list $(k_1,\dots,k_{\sigma})$. The action of $SL(2,\mathbb{R})$ on $\cH_g$ preserves each stratum $\cH(k_1,\dots,k_{\sigma})$. For later use (in this section only), let us denote by $$\mathbb{R}\cH(k_1,\dots, k_{\sigma})=\{(M,t\omega): t\in\mathbb{R}, (M,\omega)\in\cH(k_1,\dots,k_{\sigma})\}.$$

Kontsevich and Zorich have classified the connected components of strata \cite{KZ}. In particular, each stratum has at most 3 connected components.

Work of Masur and Veech shows that the $SL(2,\bR)$--action on any connected component of a stratum is ergodic with respect to certain fully supported invariant probabilities called Masur-Veech measures \cite{Ma2, V2}. In particular, typical translation surfaces (with respect to Masur-Veech measures) have dense orbits.


\subsection{\good planes.}\label{S:HTdefn} We now give a precise definition of a \good plane. 

Let $\cT_g$ denote the Teichm\"uller space of genus $g$ Riemann surfaces. A point in $\cT_g$ is an isomorphism class of Riemann surface $X$ together with a marking $f: S\to X$, which is an isotopy class of homeomorphisms from a fixed topological surface $S$ to $X$. Let $H^{1,0}$ denote the bundle of unit area Abelian differentials over $\cT_g$. The fiber of $H^{1,0}$ over a marked Riemann surface $X$ consists of unit area Abelian differentials in $H^{1,0}(X)$, where $H^{1,0}(X)$ denotes the vector space of Abelian differentials on $X$. There is a map $H^{1,0}$ to $\cH_g$ which simply forgets the marking on the Riemann surface. Put differently, $\cH_g$ is the quotient of $H^{1,0}$ by the mapping class group. 

Let $H^1_{\mathbb{R}}$ be the bundle over $H^{1,0}$ whose fiber over a marked Riemann surface $X$ with Abelian differential $\omega$ is $H^1(X,\mathbb{R})$. Since $H^1(X,\mathbb{R})$ is canonically identified with $H^1(S,\mathbb{R})$ via the marking, this bundle is a trivial bundle. 

There is a natural $SL(2,\bR)$ action on the bundle $H^{1,0}$ over Teichm\"uller space, which projects to the $SL(2,\bR)$ action on $\cH_g$. There is also an action of $SL(2,\bR)$ on the bundle $H^1_\bR$, which is typically used to define the Kontzevich-Zorich cocycle (see the next remark). This action is defined to act trivially on the fibers. In other words, given a cohomology class $\eta$ on a marked $(X,\omega)$, and given $h\in SL(2,\bR)$, the cohomology class $h(\eta)\in H^1(h(X,\omega), \bR)$ is defined to be the image of $\eta$ under the isomorphism of cohomology groups provided by affine map induced by $h$. (Since the marking on $h(X,\omega)$ is defined by post-composition with this affine map, $h(\eta)$ is identified to the same cohomology class in $H^1(S,\bR)$ as $\eta$, which is why the action is said to be trivial on fibers.) 

Consider a plane $P\subset H^1(X,\bR)$, where $(X,\omega)$ is an (unmarked) translation surface. If we pick a marking $S\to X$, we may view $P$ as a plane in a fiber of the bundle $H^1_\bR$, and hence act on it by $SL(2,\bR)$.  Given a plane $P\subset H^1(X,\mathbb{R})$, we denote by $P^{1,0}=H^{1,0}(X)\cap P_{\mathbb{C}}$ and $P^{0,1}=H^{0,1}(X)\cap P_{\mathbb{C}}$ where $P_{\mathbb{C}}=P\otimes\mathbb{C}\subset H^1(X,\mathbb{C})$ is the complexification of $P$.

We define $P$ to be \good if for any (equivalently, all) choices of marking $S\to X$, and any $h\in SL(2,\bR)$, we have that $(hP)^{1,0}$ and $(hP)^{0,1}$ are both one complex dimensional. (In fact the second condition is guaranteed by the first: If $(hP)^{1,0}$ is one dimensional, then so is the complex conjugate subspace $(hP)^{0,1}$.)





The tautological plane at $M=(X,\omega)$ is defined to be $$\spn_\bR(\Re(\omega), \Im(\omega))\subset H^1(M,\bR).$$ It is a \good plane.

\begin{rem}\label{R:annoying1}
The action of $SL(2,\bR)$ on $H^1_\bR$ commutes with the action of the mapping class group, so induces an action on the quotient of $H^1_\bR$ by the mapping class group. This action is sometimes referred to as the Kontsevich-Zorich cocycle, although strictly speaking it is not a cocycle in the usual dynamical terms (unless it is considered to be only measurably defined, or the definitions are modified slightly), since the quotient of $H^1_\bR$ by the mapping class group is not strictly speaking a vector bundle in the non-orbifold sense; indeed, over points $(X,\omega)$ where $\Aut(X, \omega)$ is non-trivial, the fiber is $H^1(X, \bR)/\Aut(X,\omega)$, and this is not always a vector space. See \ref{ss.4.def} for a definition of $\Aut(X,\omega)$.
\end{rem}

\begin{rem}\label{R:annoying2}
It would also be possible to define \good planes using the orbifold bundle over moduli space obtained as the quotient of the bundle $H^1_\bR$ over Teichm\"uller space by the action of the mapping class group. Because of the issues indicated in previous remark we found it simpler to phrase the definition in Teichm\"uller space. 

Instead of using Teichm\"uller space, we could use the moduli spaces of translation surfaces with a level structure; these moduli spaces are fine moduli spaces and are smooth manifolds instead of orbifolds. Over them, the bundle $H^1_\bR$ is a typical (non-orbifold) bundle with an action of $SL(2,\bR)$. 

These alternate approaches are not substantively different from the approach we have followed.
\end{rem}


\subsection{Teichm\"uller curves.}

We will call closed (as subsets of the stratum) $SL(2,\mathbb{R})$--orbits \emph{Teichm\"uller curves}. The projections of these orbits to the moduli space of Riemann surfaces are also called Teichm\"uller curves; these projections are in fact algebraic curves which are isometrically immersed with respect to the Teichm\"uller metric.

Given such a curve $\cC$ in moduli space, at each $X\in \cC$ consider the initial quadratic differential $q$ of any Teichm\"uller geodesic segment in  $\cC$ starting at $X$. If $q=\omega^2$ is the square of an Abelian differential, then in fact the $SL(2,\mathbb{R})$--orbit of $(X,\omega)$ is closed, and its projection to moduli space is exactly $\cC$. If $q$ is not the square of an Abelian differential, a double cover construction nonetheless yields a corresponding closed orbit in a moduli space of higher genus translation surfaces. Thus the two perspectives, closed orbits and curves in moduli space, are equivalent.

Denote by $SL(X,\omega)\subset SL(2,\bR)$ the stabilizer of the translation surface $(X,\omega)$. If the $SL(2,\bR)$--orbit of $M=(X,\omega)$ is closed, then Smillie's Theorem asserts that $SL(X,\omega)$ is a lattice in $SL(2,\mathbb{R})$ \cite{V5, SW2}. In this case, the $SL(2,\mathbb{R})$-orbit of $(X,\omega)$ is isomorphic to $SL(2,\mathbb{R})/SL(X,\omega)$, that is, the unit cotangent bundle of a complete non-compact hyperbolic surface $\mathbb{H}/SL(X,\omega)$ immersed in  moduli space.

The field $\bQ[\tr(h): h\in SL(X,\omega)]$ is called the trace field of $(X,\omega)$. If $(X,\omega)$ lies on a Teichm\"uller curve, this field is also called the trace field of this Teichm\"uller curve. It is well known that the trace field of a genus $g$ translation surface is a number field of degree at most $g$ \cite{KS, Mc}.

If a Teichm\"uller curve has trace field equal to $\bQ$, then it is generated by a square-tiled surface \cite{Mc6, GJ}. In this case we say the Teichm\"uller curve is \emph{arithmetic}. A square-tiled surface is by definition a translation covering of the torus branched over one point (see below).


\subsection{Commensurability and primitivity.} A \emph{translation covering} from a translation surface $(X,\omega)$ to another translation surface $(X', \omega')$ is defined to be a branched covering of Riemann surfaces $f:X\to X'$ such that $f^*(\omega')=\omega$. A translation surface is called \emph{primitive} if it does not translation cover anything of smaller genus.

The following result was established by M\"oller \cite{M2}, and is also explained in \cite{Mc2}.

\begin{thm}[M\"oller]\label{T:prim}
Every translation surface $M$ is a translation covering of a primitive translation surface $M_{prim}$. If $M_{prim}$ has genus greater than 1, then it is unique. If $M$ lies on a Teichm\"uller curve, then so does $M_{prim}$, and in this case $SL(M)$ and $SL(M_{prim})$ are commensurable lattices in $SL(2,\bR)$.
\end{thm}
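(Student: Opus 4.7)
The existence of $M_{prim}$ follows by induction on $g(M)$: if $M$ is not primitive then by definition there is a translation covering $M \to M'$ with $g(M') < g(M)$, and by induction $M'$ covers a primitive $M_{prim}$, which $M$ then covers as well. The base case of genus one is tautological, as genus one surfaces are primitive.

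\textbf{Uniqueness when $g(M_{prim}) \geq 2$.} This is the heart of the proof. Suppose $f_i : M \to M_i$ ($i=1,2$) are two translation coverings onto primitive surfaces of genus at least $2$. Viewing each $\bC(M_i)$ inside $\bC(M)$, let $N$ be the Riemann surface corresponding to $\bC(M_1) \cap \bC(M_2)$, so each $M_i$ covers $N$ and the composite $M \to M_i \to N$ is independent of $i$. The key step is to produce an Abelian differential $\omega_N$ on $N$ whose pullback to $M_i$ recovers $\omega_i$. To do this I would pass to the Galois closure $\tilde N \to N$ of $M \to N$, with Galois group $G$, and pull $\omega$ back to a form $\tilde\omega$ on $\tilde N$. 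Then $\tilde\omega$ is invariant under each subgroup $H_i = \mathrm{Gal}(\bC(\tilde N)/\bC(M_i))$, and since $\bC(M_1) \cap \bC(M_2) = \bC(N)$ implies $\langle H_1, H_2\rangle = G$, the form $\tilde\omega$ is $G$-invariant and descends to $\omega_N$ on $N$. Then $M_i \to N$ is a translation cover, and primitivity of $(M_i, \omega_i)$ combined with $g(M_i) \geq 2$ forces $M_i \cong N$, yielding a canonical isomorphism $M_1 \cong M_2$ compatible with $f_1, f_2$. I expect the descent step to be the main obstacle: the covers $M_i \to N$ need not be Galois, so one cannot average $\omega_i$ over fibers directly, but the Galois closure argument handles it cleanly. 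The genus $\geq 2$ hypothesis is essential since a surface can translation cover several non-isogenous elliptic curves, and primitivity does not distinguish them.

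\textbf{Teichm\"uller curve assertion.} By Smillie's theorem, $SL(M)$ is a lattice in $SL(2,\bR)$. Given $\phi \in \Aff(M)$ with derivative $A \in SL(M)$, I would apply the uniqueness just proved to the two translation covers $f$ and $f \circ \phi^{-1}$ of $(M, A\omega)$ onto the primitive targets $(M_{prim}, A\omega_{prim})$ and $(M_{prim}, \omega_{prim})$: this produces a canonical affine map $\phi' \in \Aff(M_{prim})$ with derivative $A$, hence a homomorphism $SL(M) \to SL(M_{prim})$ whose kernel is the finite translation subgroup of $\Aff(M)$. Conversely, a finite-index subgroup of $\Aff(M_{prim})$ lifts to $\Aff(M)$ through the branched cover $f$, by the standard lifting criterion: an affine map of $M_{prim}$ lifts precisely when it preserves the conjugacy class of the finite-index subgroup of $\pi_1$ determining $f$, and there are only finitely many such conjugacy classes to permute. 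This exhibits commensurable finite-index subgroups of $SL(M)$ and $SL(M_{prim})$, so $SL(M_{prim})$ is itself a lattice and $M_{prim}$ lies on a Teichm\"uller curve.
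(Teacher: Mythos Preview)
The paper does not give a proof of this theorem: it is stated as a result of M\"oller, with the sentence ``The following result was established by M\"oller \cite{M2}, and is also explained in \cite{Mc2},'' and no argument is supplied. So there is no ``paper's own proof'' to compare against.

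Your argument is essentially the standard one found in those references, and it is correct. The Galois-closure step for uniqueness is exactly the right idea: the obstruction you anticipate (that $M_i\to N$ need not be Galois) is genuinely handled by passing to $\tilde N$, and the descent of $\tilde\omega$ to a holomorphic form on $N$ is a local computation at the ramification points. Your use of Riemann--Hurwitz (implicit in ``primitivity forces $M_i\cong N$'') is where the hypothesis $g(M_i)\ge 2$ enters, and your remark about non-isogenous elliptic quotients correctly identifies why the bound is sharp.

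One small gap: in the Teichm\"uller-curve step you invoke the uniqueness of $M_{prim}$ to transport affine automorphisms, but uniqueness was only established when $g(M_{prim})\ge 2$. When $g(M_{prim})=1$ the surface $M$ is square-tiled and the commensurability of $SL(M)$ with $SL(2,\bZ)$ is the classical theorem of Gutkin--Judge; you should cite that separately rather than appeal to uniqueness. Also, in the lifting direction, since $f$ is branched you should work with $\pi_1$ of the complement of the branch locus (and note that affine maps preserve the set of singular values, hence act on this fundamental group); your sketch is fine once this is said.
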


When a Teichm\"uller curve consists of primitive translation surfaces, we say this Teichm\"uller curve is primitive. When two Teichm\"uller curves $\cC_1$ and $\cC_2$ have a pair of translation surfaces $M_1\in \cC_1$ and $M_2\in \cC_2$ covering the same primitive translation surface, then we say that $\cC_1$ and $\cC_2$ are commensurable. Theorem \ref{T:prim} gives that any commensurability class of non-arithmetic Teichm\"uller curves contains a unique primitive Teichm\"uller curve.

Recall that the trace field of a genus $g$ translation surface is a number field of degree at most $g$. Recall also that the trace field is a commensurability invariant \cite{KS}. It follows that if the degree of the trace field of a Teichm\"uller curve of genus $g$ translation surfaces is $g$, then this Teichm\"uller curve is primitive. In this case we say that the Teichm\"uller curve is \emph{algebraically primitive}.

We think of the degree of the trace field as a measure of how non-arithmetic a Teichm\"uller curve is, and thus algebraically primitive curves are as far as possible from being arithmetic.


\subsection{Affine invariant submanifolds.} 
Let $\cH$ be a finite cover of a stratum $\cH(k_1,\dots,k_{\sigma})$ which is a a manifold instead of an orbifold.\footnote{This guarantees that there is a family of Riemann surfaces over $\cH$, such that fiber over a point $\cH$ is a Riemann surface isomorphic to this point. Over such $\cH$ there is a well defined (non-orbifold) flat vector bundle whose fiber over a point is its first real cohomology. $\cH$ may be taken to be the quotient of Teichm\"uller space by any torsion free finite index subgroup of the mapping class group.} For example, $\cH$ may be taken to be the moduli space of translation surfaces in some given stratum equipped with a level $3$ structure, see Remark \ref{R:annoying2} above. For the reasons expressed in Remark \ref{R:annoying1}, it is easier to work in $\cH$ than in the stratum. Let us denote by $\mathbb{R}\mathcal{H}$ the corresponding finite cover of $\mathbb{R}\cH(k_1,\dots,k_{\sigma})$.

Given a translation surface $(X,\omega)$, let $\Sigma\subset X$ denote the set of zeros of $\omega$. Pick any basis $\{\xi_1, \ldots, \xi_n\}$ for the relative homology group $H_1(X,\Sigma; \bZ)$. The map $\Phi:\mathbb{R}\cH\to \bC^n$ defined by
\[\Phi(X,\omega)=\left( \int_{\xi_1} \omega, \ldots, \int_{\xi_n}\omega \right)\]
defines local \emph{period coordinates} on a neighborhood $(X,\omega)\in \mathbb{R}\cH$. 

Let $H^1$ denote the flat bundle over $\mathbb{R}\cH$ whose fiber over $(X, \omega)$ is $H^1(X;\bC)$, and let $H^1_{rel}$ denote the flat bundle whose fiber over $(X,\omega)$ is $H^1(X,\Sigma; \bC)$, where $\Sigma$ is the set of singularities of $(X,\omega)$. Let $p:H^1_{rel}\to H^1$ denote the natural map from relative to absolute cohomology. Also let $H^1_\bR$ denote the flat bundle whose fiber over $(X,\omega)$ is $H^1(X;\bR)$.

From the description of the local period coordinates, it is clear that the tangent bundle of $\mathbb{R}\cH$ is naturally isomorphic to $H^1_{rel}$:  this follows from the fact that $H^1(X,\Sigma;\mathbb{C})\simeq \textrm{Hom}(H_1(X,\Sigma;\mathbb{Z}),\mathbb{C})$.

Period coordinates provide $\mathbb{R}\cH$ with a system of coordinate charts with values in $\bC^n$ and transition maps in $GL(n,\bZ)$. An \emph{affine invariant submanifold} of $\cH$ is  an immersed manifold $\cM\hookrightarrow \cH$ such that each point of $\cM$ has a neighborhood whose image is equal to the set of unit area surfaces satisfying a set of real linear equations in  period coordinates. For notational simplicity, generally we will treat affine invariant submanifold as subsets of strata, referring to the image of the immersion.

An affine invariant submanifold of a stratum (rather than of a finite cover) is the projection of an affine invariant submanifold via the finite cover map from $\cH$ as above to the stratum.


\subsection{Hodge and symplectic inner products.}\label{SS:hodge}

The complex bundle $H^1$ has a natural pseudo-Hermitian intersection form given by
$$(\alpha,\beta):=\frac{i}{2}\int_X\alpha\wedge\overline{\beta}$$
for $\alpha$ and $\beta$ in a fiber $H^1(X;\mathbb{C})$ of $H^1$. This form has signature $(g,g)$ because it is positive-definite on the subbundle $H^{1,0}\subset H^1$ whose fibers are $H^{1,0}(X)$ and it is negative-definite on the subbundle $H^{0,1}\subset H^1$ whose fibers are $H^{0,1}(X)$.

By Hodge's Representation Theorem, any $c\in H^1(X;\mathbb{R})$ is the real part of a unique $h(c)\in H^{1,0}$. In the literature, the \emph{imaginary part} of $h(c)$ is called the Hodge-star $\ast c$ of $c$ and the operator $c\mapsto\ast c$ is the \emph{Hodge-star operator}. This operator gives an isomorphism $H^{1,0}(X)\simeq H^1(X;\mathbb{R})$ and thus the restriction of the pseudo-Hermitian form $(\cdot,\cdot)$ to $H^{1,0}(X)$ induces an inner product  on the fibers of $H^1_{\mathbb{R}}$ called the \emph{Hodge inner product}.


{Given $V\subset H^1(X;\mathbb{R})$, we denote by $V^\perp$, resp. $V^{\dagger}$, the orthogonal complement of $V$ with respect to Hodge inner product, resp. symplectic intersection form. For later use, we note that $P^{\perp}=P^{\dagger}$ whenever $P$ is a \good plane. Indeed, this holds because, from the definition, a \good plane $P$ is Hodge-star invariant and hence the desired fact is a consequence of the following lemma in \cite{FMZsurvey}:

\begin{lem}[Lemma 3.4 in \cite{FMZsurvey}]\label{L:sameperp}For $V\subset H^1(X;\mathbb{R})$, one has $V^{\perp}=V^{\dagger}$ if and only if $V$ is Hodge-star invariant, and, in this case, the subspace $V^{\perp}=V^{\dagger}$ is Hodge-star invariant.
\end{lem}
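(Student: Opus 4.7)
The plan is to pivot on the elementary identity relating the two pairings via the Hodge star: for $a, b \in H^1(X;\mathbb{R})$,
$$\langle a, b\rangle_H = \int_X a\wedge *b,$$
where the right hand side equals $\omega(a, *b)$ with $\omega$ denoting the cup product / symplectic intersection form. This bridge identity follows from unraveling the construction in Subsection \ref{SS:hodge}: writing $h(c) = c + i\,*c$ (up to a normalization to be fixed by direct computation), the real part of the pseudo-Hermitian pairing $(h(a), h(b)) = \tfrac{i}{2}\int_X h(a)\wedge\overline{h(b)}$ expands to $\int_X a\wedge *b$ after using the pointwise identity $*a\wedge *b = a\wedge b$ valid for $1$-forms on a Riemann surface.

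With this identity in hand, the first equivalence is immediate. By the bridge,
$$V^{\perp} = \{w \in H^1(X;\mathbb{R}) : \omega(w, *v) = 0 \text{ for all } v \in V\} = (*V)^{\dagger},$$
so $V^\perp = V^\dagger$ if and only if $(*V)^\dagger = V^\dagger$. Since the symplectic form on $H^1(X;\mathbb{R})$ is nondegenerate, the symplectic orthogonal is an involution on subspaces (i.e.\ $(U^\dagger)^\dagger = U$), and therefore $(*V)^\dagger = V^\dagger$ holds exactly when $*V = V$.

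For the Hodge-star invariance of $V^\perp = V^\dagger$ under the assumption that $V$ is $*$-invariant, I would first verify that $*$ is an isometry of $\langle\cdot,\cdot\rangle_H$. Using $*^2 = -\Id$ on $1$-forms together with the identity $*a \wedge b = -\,a \wedge *b$ (itself a consequence of the symmetry $a \wedge *b = b \wedge *a$ and the antisymmetry of the wedge product on $1$-forms), one computes
$$\langle *a, *b\rangle_H = \int_X *a \wedge *(*b) = -\int_X *a \wedge b = \int_X a \wedge *b = \langle a, b\rangle_H.$$
Once $*$ is known to preserve the Hodge inner product, the Hodge orthogonal complement of any $*$-invariant subspace is automatically $*$-invariant, which combined with the first part yields that $V^\perp = V^\dagger$ is Hodge-star invariant.

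The only obstacle worth mentioning is bookkeeping the constant in the bridge identity $\langle a, b\rangle_H = \omega(a, *b)$ so that the factor of $i/2$ in the pseudo-Hermitian form and the normalization of $h(c)$ line up consistently with the induced inner product on real cohomology. This is purely a routine computation and carries no conceptual difficulty.
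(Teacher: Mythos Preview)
Your argument is correct and is the standard one. Note, however, that the paper does not supply its own proof of this lemma: it is quoted verbatim as Lemma~3.4 of \cite{FMZsurvey} and used as a black box, so there is no in-paper proof to compare against. Your approach---reducing everything to the bridge identity $\langle a,b\rangle_H=\int_X a\wedge *b=\omega(a,*b)$, whence $V^{\perp}=(*V)^{\dagger}$ and the equivalence follows from nondegeneracy of $\omega$---is exactly the argument one finds in the cited source, so there is no substantive divergence to discuss.

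One minor remark on your write-up: you derive the $*$-invariance of $V^{\perp}$ by first checking that $*$ is a Hodge isometry. This is fine, but once you have $V^{\perp}=(*V)^{\dagger}$ you can also get it more cheaply: if $*V=V$ then $V^{\dagger}=(*V)^{\dagger}=V^{\perp}$, and applying the same identity to $W=V^{\dagger}$ gives $W^{\perp}=(*W)^{\dagger}$; since $W^{\dagger}=V=*V$ is $*$-invariant, $(*W)^{\dagger}=W^{\dagger}$ forces $*W=W$. Either route works.
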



\section{Limits of \good planes}\label{s.HTplanes}

In this section, we prove Theorem \ref{T:manygood}. The proof is naturally divided into two steps in the following two propositions.

\begin{prop}\label{P:goodonT}
Let $M$ be a translation surface on a Teichm\"uller curve. Then $M$ has at least $k$ orthogonal \good planes, where $k$ is the degree of the trace field.
\end{prop}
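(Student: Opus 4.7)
The plan is to apply M\"oller's theorem on the variation of Hodge structure over a Teichm\"uller curve to produce the $k$ planes directly from his splitting of the Hodge bundle. Let $\cC$ denote the Teichm\"uller curve through $M=(X,\omega)$, let $K$ denote its trace field, so $[K:\bQ]=k$, and let $\sigma_1,\dots,\sigma_k:K\hookrightarrow\bR$ be the real embeddings. The starting point is the main theorem of \cite{M}: the tautological direction $\spn_\bR(\Re(\omega),\Im(\omega))$ generates under monodromy a sub-local-system $W\subset H^1_\bR|_\cC$ of rank $2k$ which carries a real multiplication by $\cO_K$, and which splits as a direct sum $W=L_1\oplus\cdots\oplus L_k$ of rank-$2$ flat subbundles, with $L_i$ the eigenspace for $\sigma_i$. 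The crucial Hodge-theoretic input is that this splitting refines the Hodge decomposition: each $L_i$ is a sub-variation of Hodge structure of weight one with $(L_i)^{1,0}$ of complex dimension $1$ at every point of $\cC$, and $L_1$ is the tautological bundle containing $\omega$.

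Next, I would let $P_i\subset H^1(X,\bR)$ be the fiber of $L_i$ at $M$ and check that each $P_i$ is a \good plane. For any $h\in SL(2,\bR)$ the surface $hM$ still lies on $\cC$, and the action $P_i\mapsto hP_i$ of the definition coincides with parallel transport of the flat bundle $L_i$ along the $SL(2,\bR)$-orbit through $M$. Thus $hP_i$ is the fiber of $L_i$ over $hM$, and M\"oller's assertion that each $L_i$ has one-dimensional $(1,0)$-part at every point of $\cC$ gives $\dim_\bC(hP_i)^{1,0}=1$. Hence $P_i$ is \good.

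For pairwise orthogonality, I would use that real multiplication is self-adjoint with respect to the symplectic intersection form: for each $r\in\cO_K$ the induced endomorphism $R_r$ of $W$ satisfies $\langle R_r v,w\rangle_{\mathrm{sym}}=\langle v,R_r w\rangle_{\mathrm{sym}}$. For $v\in P_i$ and $w\in P_j$ with $i\neq j$, picking $r\in K$ with $\sigma_i(r)\neq\sigma_j(r)$ yields $\sigma_i(r)\langle v,w\rangle_{\mathrm{sym}}=\sigma_j(r)\langle v,w\rangle_{\mathrm{sym}}$, forcing $\langle v,w\rangle_{\mathrm{sym}}=0$. Thus the $P_i$ are pairwise symplectically orthogonal. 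Since each $P_i$ is \good and therefore Hodge-star invariant, Lemma \ref{L:sameperp} gives $P_i^\perp=P_i^\dagger$, so symplectic orthogonality upgrades to Hodge orthogonality.

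The main obstacle is not computational but one of translation: M\"oller's theorem is stated as a property of the variation of Hodge structure on the family of Jacobians over the Teichm\"uller curve in moduli space, whereas the \good condition is phrased in terms of the $SL(2,\bR)$-action on the bundle $H^1_\bR$ over Teichm\"uller space used to define the Kontsevich-Zorich cocycle. The only real work in the argument is to verify that these two frameworks coincide on $\cC$, so that the flatness of $L_i$ in M\"oller's sense is precisely the $SL(2,\bR)$-invariance needed in the definition of a \good plane; once this is in place, the Hodge-type and orthogonality statements drop out immediately from M\"oller's theorem and the algebra of real multiplication.
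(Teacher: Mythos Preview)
Your proposal is correct and follows essentially the same approach as the paper: invoke M\"oller's decomposition of $H^1_\bR$ over the Teichm\"uller curve into flat rank-$2$ sub-VHS $\bL_1,\dots,\bL_k$, and observe that flatness together with the one-dimensionality of $(\bL_i)^{1,0}$ at every point makes each fiber a \good plane. The only difference is cosmetic: the paper packages the orthogonality of the $\bL_i$ directly into its statement of M\"oller's theorem (Theorem \ref{T:moller}), whereas you re-derive symplectic orthogonality from self-adjointness of real multiplication and then upgrade via Lemma \ref{L:sameperp}; this is a perfectly valid elaboration of what M\"oller's result already contains.
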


\begin{prop}\label{P:limit_is_good}
Suppose that $M_n\to M$ are translation surfaces, and that $P_n$ is a \good plane at $M_n$. Let $P$ be a limit of the $P_n$. Then $P$ is a \good plane at $M$.
\end{prop}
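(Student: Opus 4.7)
The plan is to fix $h\in SL(2,\bR)$ and show, by a matching upper and lower bound, that $\dim_\bC (hP)^{1,0}=1$. Since $h$ is arbitrary, this establishes that $P$ is a \good plane at $M$.

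For the upper bound, I would first observe that any real $2$-plane $Q\subset H^1(Y,\bR)$ on any Riemann surface $Y$ satisfies $\dim_\bC\bigl(Q\otimes \bC\cap H^{1,0}(Y)\bigr)\leq 1$. Indeed, if the intersection were $2$-dimensional, then $Q\otimes\bC$ would lie in $H^{1,0}(Y)$; but $Q\otimes \bC$ is closed under complex conjugation (as the complexification of a real subspace), so it would also lie in $H^{0,1}(Y)$, forcing $Q\otimes\bC\subseteq H^{1,0}(Y)\cap H^{0,1}(Y)=0$, which is absurd. Applied to $Q=hP$, this yields $\dim_\bC(hP)^{1,0}\leq 1$.

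For the lower bound I would appeal to upper semi-continuity of intersection dimension. Lift the convergent sequence $M_n\to M$ to nearby points $\widetilde M_n\to \widetilde M$ in Teichm\"uller space (or equivalently pass to a level-structure cover as in Remark \ref{R:annoying2}), so that $P_n$ lifts to a plane $\widetilde P_n$ in the now trivial bundle $H^1_\bR$, converging to a lift $\widetilde P$ of $P$. By continuity of the $SL(2,\bR)$-action on both $\cT_g$ and on $H^1_\bR$ we have $h\widetilde M_n\to h\widetilde M$ and $h\widetilde P_n\to h\widetilde P$, while the Hodge subbundle $H^{1,0}$ varies holomorphically and hence continuously in the base. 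A standard semi-continuity statement — equivalently, the rank of the composition $hP_n\otimes\bC\hookrightarrow H^1(hM_n,\bC)\twoheadrightarrow H^{0,1}(hM_n)$ is lower semi-continuous in $n$ — then gives
\[
\dim_\bC(hP)^{1,0}\;\geq\;\limsup_n\dim_\bC(hP_n)^{1,0}\;=\;1.
\]
Combined with the upper bound, $\dim_\bC(hP)^{1,0}=1$ exactly, and since $h$ was arbitrary, $P$ is \good at $M$.

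The only delicate point is the bookkeeping: making precise the sense in which $P_n\to P$ when they a priori live in different fibres of $H^1_\bR$, and checking that this convergence is preserved by the $SL(2,\bR)$-action. This is handled by working upstairs in Teichm\"uller space (or the level-structure cover), where $H^1_\bR$ becomes a genuine trivial bundle and the $SL(2,\bR)$-action on fibres is literally the identity via the marking, so that convergence of $\widetilde P_n\to\widetilde P$ in the Grassmannian of a fixed vector space is meaningful and automatically commutes with the $h$-action. Once that is arranged, both semi-continuity claims are routine, and no real geometric input beyond the definition of a \good plane is needed.
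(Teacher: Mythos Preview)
Your argument is correct and follows essentially the same route as the paper: the paper packages your two ingredients as Lemma~\ref{L:glimit} (that $hP_n\to hP$, handled exactly as you do by trivializing over Teichm\"uller space) and Lemma~\ref{L:planelimit} (continuity of $H^{1,0}$ forces nontrivial intersection in the limit), and then combines them just as you do. The only cosmetic difference is that the paper leaves your upper bound $\dim_\bC(hP)^{1,0}\le 1$ implicit, reducing the claim directly to ``$(hP)_\bC\cap H^{1,0}(hM)\neq 0$''.
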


We emphasize that convergence $M_n\to M$ is in the moduli space of Abelian differentials, not in any specific stratum. Convergence $M_n\to M$ is equivalent to the existence of maps (diffeomorphisms) $f_n:M_n\to M$ such that $(f_n)_*(\omega_n)\to \omega$. The maps $f_n$ may not be uniquely defined, even only up to homotopy, because $M$ may have affine self maps with identity derivative. We say that $P$ is a limit of the $P_n$ if, for some choice of maps $f_n$, the planes $(f_n)_*(P_n)$ converge to $P$ in the Grassmannian of planes of $H^1(X,\bR)$.

\begin{proof}[\eb{Proof of Theorem \ref{T:manygood} using Propositions \ref{P:goodonT} and \ref{P:limit_is_good}.}]
Let $M\in\cM$ be a translation surface. By the assumption of density, there are translation surfaces $M_n$ so that $M_n\to M$ and each $M_n$ lies on a Teichm\"uller curve whose trace field has degree at least $k$. By Proposition \ref{P:goodonT}, each $M_n$ has at least $k$ orthogonal \good planes, say $P_n^{(1)}, \ldots, P_n^{(k)}$. After passing to a subsequence, we may assume that for each $i=1, \ldots, k$, the planes $P_n^{(i)}$ converge to a plane $P^{(i)}$ at $M$. By continuity of the Hodge inner product, the $P^{(i)}, i=1, \ldots, k$ are Hodge orthogonal, and by  Proposition \ref{P:limit_is_good}, these planes are \good. Since $M\in\cM$ was arbitrary this proves the result.
\end{proof}


\subsection{M\"oller's description of the VHS over a Teichm\"uller curve.}  Recall that over any Teichm\"uller curve $\cM$ the bundle $H^1_\bR$ in fact has the structure of a real variation of Hodge structures (VHS). (For a concrete introduction to VHS in the context of Teichm\"uller curves, see for example \cite{W1}.)

\begin{thm}[M\"oller \cite{M}]\label{T:moller}
Suppose that $\cM$ is a lift of a Teichm\"uller curve to a finite cover $\cH$ of a stratum as above, and suppose the trace field has degree $k$. Then there is a decomposition of VHS over $\cM$
$$H^1_\bR=\bigoplus_{i=1}^k \bL_i \bigoplus \bW,$$
where $\bL_1$ is the tautological bundle and the $\bL_i, i=2, \ldots, k$ are Galois conjugate to $\bL_i$. The $\bL_i$'s are symplectically and Hodge orthogonal.
\end{thm}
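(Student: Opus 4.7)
The plan is to deduce this decomposition from M\"oller's theorem on real multiplication along Teichm\"uller curves, which states that for every $(X,\omega) \in \cM$ the Jacobian $\Jac(X)$ carries real multiplication by the trace field $\bK$ (a totally real number field of degree $k$), with $\omega$ as a $\bK$-eigenform. Moreover, this $\bK$-action is the restriction of a representation of $\bK$ into $\End(H^1_\bR)$ that is flat for the Gauss-Manin connection, commutes with the monodromy of the Teichm\"uller curve, is self-adjoint with respect to the symplectic intersection form, and preserves the Hodge filtration. These are the essential inputs and I would cite them from \cite{M}.

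Given this input, first I would decompose. Tensoring the $\bQ$-local subsystem cut out by the $\bK$-action with $\bR$ and using the $k$ real embeddings $\sigma_1,\dots,\sigma_k$ of $\bK$, we obtain simultaneous eigenspace bundles $\bL_i \subset H^1_\bR$ on which $\lambda \in \bK$ acts as multiplication by $\sigma_i(\lambda)$. Because the $\bK$-action is flat and monodromy-equivariant, each $\bL_i$ is a flat real subbundle of $H^1_\bR$. Because the $\bK$-action preserves the Hodge filtration, each $\bL_i^{1,0} := H^{1,0}\cap (\bL_i\otimes \bC)$ is a holomorphic subbundle; a rank count (each eigenline of $\bK$ acting on $H^{1,0}$ is one-complex-dimensional, since $H^{1,0}$ has complex dimension $g$ and $\bK$-eigenspaces are symmetric under complex conjugation) forces $\bL_i$ to have real rank $2$ and $\bL_i^{1,0}$ to have complex rank $1$, so each $\bL_i$ is a sub-VHS of weight $1$.

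Next I would identify $\bL_1$ with the tautological subbundle and the others as Galois conjugates. Taking $\sigma_1$ to be the identity embedding, the eigenform condition gives $\Re\omega, \Im\omega \in \bL_1$; since $\bL_1$ has real rank $2$, this containment is an equality. For $i \geq 2$, applying the Galois action $\sigma_i \circ \sigma_1^{-1}$ to the defining equations of the $\bK$-action sends $\bL_1$ to $\bL_i$, which gives the Galois-conjugation claim. To get the orthogonality statements, the symplectic self-adjointness of the $\bK$-action forces the eigenspaces $\bL_i$ and $\bL_j$ to be symplectically orthogonal when $i\neq j$ (since $\sigma_i(\lambda)\langle x,y\rangle = \langle \lambda x,y\rangle = \langle x,\lambda y\rangle = \sigma_j(\lambda)\langle x,y\rangle$ forces $\langle x,y\rangle = 0$). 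Because each $\bL_i$ is a sub-VHS of type $(1,0)+(0,1)$, it is Hodge-star invariant, so Lemma \ref{L:sameperp} gives that its symplectic and Hodge orthogonal complements coincide; hence the $\bL_i$'s are simultaneously symplectically and Hodge orthogonal. Finally I would set $\bW$ to be the orthogonal complement of $\bigoplus_i \bL_i$, which inherits a VHS structure as the orthogonal complement of a polarized sub-VHS.

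The principal obstacle is not the formal decomposition step but establishing the deep input that $\bK$ genuinely acts flatly and compatibly with the Hodge filtration over the \emph{entire} Teichm\"uller curve; this is the substantive content of M\"oller's work and the reason the statement requires $\cM$ to be a Teichm\"uller curve rather than an arbitrary affine invariant submanifold. Given this input, the remaining argument is essentially linear algebra of semisimple commutative actions on polarized Hodge structures.
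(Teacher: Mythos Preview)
The paper does not give a proof of this theorem; it is stated with attribution to M\"oller \cite{M} and then immediately used to prove Proposition~\ref{P:goodonT}. So there is no in-paper argument to compare your proposal against.

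Your outline is a correct account of how the decomposition is deduced from M\"oller's real multiplication theorem, and you have correctly isolated where the depth lies: the flat, Hodge-compatible, self-adjoint action of the trace field $\bK$ on $H^1_\bR$ over the Teichm\"uller curve is the substantive content of \cite{M}, and once granted, the eigenspace decomposition, the rank-$2$ and $(1,0)$-rank-$1$ counts, the identification of $\bL_1$ with the tautological plane, and the symplectic/Hodge orthogonality all follow by the linear-algebraic arguments you sketch. One minor point: your rank argument for $\dim_\bR \bL_i = 2$ is slightly loose as written (you invoke a symmetry of eigenspaces under complex conjugation before establishing the rank); a cleaner route is to note that the $\bL_i$ are Galois-conjugate over $\bQ$ and hence have equal real rank, while their direct sum has real rank $2k$ because it is $\cO_\bK \otimes_\bQ \bR$ acting on a rank-$2k$ local system. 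But this is cosmetic; the proposal is sound.
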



The statement that this is a decomposition of VHS means precisely that each of the $k+1$ subbundles in this decomposition is a flat subbundle of $H^1_\bR$ which is the sum of of it's $(1,0)$ and $(0,1)$ parts. In particular, each fiber of each $\bL_i$ contains a holomorphic one form.

\begin{proof}[\eb{Proof of Proposition \ref{P:goodonT}.}]
The fiber of $\bL_i$ over a point $(X,\omega)\in \cM$ is a plane $P_i$ in $H^1(X,\bR)$. Any parallel transport of this plane to any other translation surface in $\cM$ again gives a fiber $P'$ of $\bL_i$. The complexification of this fiber always contains a holomorphic one form. So we conclude that each fiber of each $\bL_i$ is a \good plane.
\end{proof}

\begin{rem}
Using the techniques of the next section, it is possible to show that for a Teichm\"uller curve the $\bL_i$ are the {only} \good planes in $\bigoplus_{i=1}^k \bL_i$. In particular, an algebraically primitive Teichm\"uller curve has no \good planes except those given by Proposition \ref{P:goodonT}. We will not need this result, so its proof is omitted.
\end{rem}


\subsection{Limits of \good planes.}\label{SS:lims}

\begin{lem}\label{L:planelimit}
If $M_n\to M$, and $P_n$ a plane at $M_n$ so that $(P_n)_\bC\cap H^{1,0}(M_n)\neq \{0\}$, and $P$ is a limit of the $P_n$ at $M$, then $P_\bC\cap H^{1,0}(M)\neq \{0\}$.
\end{lem}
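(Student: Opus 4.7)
The plan is to use a straightforward compactness argument, relying on the fact that the Hodge subbundle $H^{1,0}$ varies continuously over the moduli space. Fix diffeomorphisms $f_n: M_n \to M$ realizing the convergence $M_n \to M$, chosen so that $(f_n)_*(P_n) \to P$ in the Grassmannian of planes of $H^1(M,\bR)$. For each $n$, pick a nonzero element $\alpha_n \in (P_n)_\bC \cap H^{1,0}(M_n)$, and consider $\beta_n := (f_n)_*(\alpha_n) \in ((f_n)_*(P_n))_\bC \subset H^1(M;\bC)$. Normalize so that $\|\beta_n\| = 1$ in some fixed norm on $H^1(M;\bC)$; then by compactness of the unit sphere, after passing to a subsequence $\beta_n \to \beta$ with $\|\beta\| = 1$, so $\beta \neq 0$.

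Next I would verify the two containments. Since $\beta_n \in ((f_n)_*(P_n))_\bC$ and $(f_n)_*(P_n) \to P$, continuity of the complexification map on the Grassmannian gives $\beta \in P_\bC$. The key step is then to show $\beta \in H^{1,0}(M)$. For this, observe that under the diffeomorphisms $f_n$, the subspaces $(f_n)_*(H^{1,0}(M_n)) \subset H^1(M;\bC)$ converge to $H^{1,0}(M)$ as points in the Grassmannian of complex $g$-planes. This follows from the fact that $(f_n)_*(\omega_n) \to \omega$ (which is built into the definition of convergence $M_n \to M$) together with the continuous/holomorphic dependence of the Hodge decomposition on the underlying complex structure, as provided by standard Hodge theory over Teichm\"uller space. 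Since $\beta_n \in (f_n)_*(H^{1,0}(M_n))$ and these subspaces converge to $H^{1,0}(M)$, the limit $\beta$ lies in $H^{1,0}(M)$.

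Combining the two containments yields $\beta \in P_\bC \cap H^{1,0}(M)$ with $\beta \neq 0$, which is exactly the conclusion. The main obstacle, such as it is, is verifying the continuous dependence of $H^{1,0}$ on the complex structure in the precise sense used here; but this is a well-known fact, and can be phrased as the statement that $H^{1,0}$ defines a holomorphic subbundle of the flat bundle $H^1$ over Teichm\"uller space, whose fiber therefore depends continuously on the basepoint. Everything else is routine extraction of a convergent subsequence from a sequence on a compact Grassmannian.
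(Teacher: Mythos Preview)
Your proof is correct and follows exactly the same approach as the paper. The paper's proof consists of a single sentence invoking the continuous variation of $H^{1,0}$ inside $H^1$ with the complex structure; your argument simply unpacks this into the standard compactness/subsequence extraction, which is precisely what underlies the one-line justification.
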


\begin{proof}
This is true because the space of Abelian differentials inside of first cohomology varies continuously with the complex structure on the surface.
\end{proof}

\begin{lem}\label{L:glimit}
If $M_n\to M$ are marked translation surfaces, and $P_n$ are planes at $M_n$ and $P$ is a limit of the $P_n$ at $M$, then for any $\newh \in SL(2,\bR)$ $\newh (P)$ is a limit of the $\newh (P_n)$.
\end{lem}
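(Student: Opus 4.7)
The plan is to exploit the defining feature of the $SL(2,\bR)$-action on $H^1_\bR$ recalled in Section \ref{S:HTdefn}: this action is trivial on fibers when $H^1_\bR$ is trivialized by markings. Equivalently, $h$ sends a cohomology class $\eta\in H^1(M_n,\bR)$ to its image $h\eta\in H^1(hM_n,\bR)$ under the affine map $\mathrm{aff}_h:M_n\to hM_n$, which is literally the identity on underlying smooth surfaces. Hence, under this canonical identification of $H^1(hM_n,\bR)$ with $H^1(M_n,\bR)$, the plane $hP_n$ coincides with $P_n$; likewise $hP$ coincides with $P$.

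Given the diffeomorphisms $f_n:M_n\to M$ witnessing the convergence $M_n\to M$ and $(f_n)_*(P_n)\to P$, I would take $g_n:hM_n\to hM$ to be the same underlying set maps as $f_n$; this is well-defined because $hM_n,M_n$ (respectively $hM,M$) share an underlying smooth surface. Two things must then be verified to conclude that $hP$ is a limit of the $hP_n$. First, that $(g_n)_*(h\omega_n)\to h\omega$, so that $g_n$ genuinely witnesses $hM_n\to hM$: under $\mathrm{aff}_h$, the class of $h\omega_n$ corresponds to $(a\Re(\omega_n)+b\Im(\omega_n))+i(c\Re(\omega_n)+d\Im(\omega_n))$ where $a,b,c,d$ are the entries of $h$, which is an $\bR$-linear function of $\omega_n$; thus $\bR$-linearity of $(f_n)_*$ together with $(f_n)_*(\omega_n)\to\omega$ yields the claim. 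Second, that $(g_n)_*(hP_n)\to hP$: under the identifications above, this reduces to $(f_n)_*(P_n)\to P$, the given hypothesis.

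The proof is thus almost immediate from unwinding definitions, and there is no substantial obstacle. The only real care required is bookkeeping: distinguishing the natural $SL(2,\bR)$-action on holomorphic differentials (which genuinely transforms $\omega_n$) from the action on $H^1_\bR$ over Teichm\"uller space (which is trivial on fibers under markings), and keeping track of the various cohomology identifications provided by $\mathrm{aff}_h$ and the diffeomorphisms $f_n,g_n$.
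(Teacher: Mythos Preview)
Your proposal is correct and takes essentially the same approach as the paper. The paper defines $g_n=\phi_h\circ f_n\circ\phi_h^{-1}$ where $\phi_h:M\to hM$ is the affine map; since $\phi_h$ is the identity on underlying smooth surfaces (as you note), this is exactly your choice of $g_n$, and the paper's ``one checks that\ldots'' is precisely the two verifications you spell out.
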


The definition of convergence of planes in the cohomology of marked translation surfaces is identical to the unmarked cases discussed near the beginning of this section, except that the maps $f_n$ should be required to commute with the markings. 

\begin{proof}
By definition there are maps $f_n:(X_n, \Sigma_n)\to (X,\Sigma)$ so that $(f_n)_*[\omega_n]\to [\omega].$ Fix $\newh $. For any (marked) translation surface $M$, $\newh $ induces an affine map $\phi_{\newh }: M\to \newh M$ (and a marking on $\newh M$). One checks that the maps $\phi_{\newh }\circ f_n \circ \phi_{\newh }^{-1}: \newh M_n\to \newh M$ induce maps on cohomology which send $\newh P_n$ to a sequence of planes at $\newh M$ converging to $\newh P$.
\end{proof}

\begin{proof}[\eb{Proof of Proposition \ref{P:limit_is_good}.}]
Fix $\newh $. It suffices to show that $(\newh  P)_\bC$ intersects $H^{1,0}(\newh M)$. This follows directly from the previous two lemmas. By Lemma \ref{L:glimit}, $\newh  P$ is a limit of the $\newh  P_n$. The  complexifications of the $\newh  P_n$ each intersect $H^{1,0}(\newh M_n)$, so Lemma \ref{L:planelimit} may be applied.
\end{proof}


\section{Square-tiled surfaces with no non-trivial good planes}\label{s.4}

In this section, we prove Theorem \ref{T:basecase}.


\subsection{Definitions.}\label{ss.4.def} Recall that the Veech group $SL(M)$ of $M=(X,\omega)$ is the stabilizer of $M$ in $SL(2,\mathbb{R})$. Denote by $\textrm{Aff}(M)$ the set of affine diffeomorphisms of $M$, i.e., the set of homeomorphisms of $M$ preserving the set of zeroes  and have the form of an affine transformation in the charts of the translation atlas. The Veech group $SL(M)$ is the set of linear parts (derivatives) of the elements of $\textrm{Aff}(M)$. There is  the following exact sequence:
$$1\to \textrm{Aut}(M)\to \textrm{Aff}(M)\to SL(M)\to 1.$$
The affine group $\textrm{Aff}(M)$ acts naturally (i.e., preserving the natural symplectic intersection form) on the homology (resp. cohomology) group $H_1(M,\mathbb{R})$ (resp. $H^1(M,\mathbb{R})$) of $M$, inducing natural homomorphisms
$$\textrm{Aff}(M)\to \textrm{Sp}(H_1(M,\mathbb{R})) \quad (\textrm{resp.}, \, \textrm{Aff}(M)\to \textrm{Sp}(H^1(M,\mathbb{R})).$$

Given a translation surface $M=(X,\omega)$, denote by $\Gamma(M)$  the image
of $\textrm{Aff}(M)$ under the natural map $\textrm{Aff}(M)\to \textrm{Sp}(H^1(M,\mathbb{R}))$. Let $\overline{\Gamma}(M)$ denote the Zariski closure of $\Gamma(M)$.

Let $H^1_\perp(M, \bR)$ denote $\spn_\bR(\Re(\omega), \Im(\omega))^\perp$ in $H^1(M,\mathbb{R})$. The perp may be taken with either the symplectic form or the Hodge inner product, and the result is the same (Lemma \ref{L:sameperp}).
Let $\Gp(M)$ be the restriction of $\Gamma(M)$ to $H^1_\perp(M, \bR)$, and let $\oGp(M)$ be the Zariski closure of $\Gp(M)$.

\begin{rem}\label{R:monodromy-duality} Define $H_1^{\perp}(M,\mathbb{R})\subset H_1(M,\mathbb{R})$ to be the annihilator of the tautological plane $\spn_\bR(\Re(\omega), \Im(\omega))\subset H^1(M,\mathbb{R})$. Define the (homology) tautological plane $H_1^{st}(M,\mathbb{R})\subset H_1(M,\mathbb{R})$ to be the the orthogonal complement of $H_1^\perp(M,\mathbb{R})$ \cite{MY}. The affine group $\textrm{Aff}(M)$ respects  Poincar\'e duality: The  representations of $\textrm{Aff}(M)$ in homology and cohomology are in a precise sense dual. If $A\in \textrm{Aff}(M)$ acts through a symplectic matrix $M(A)$ in a given symplectic basis of $H_1(M,\mathbb{R})$, then $A\in\textrm{Aff}(M)$ acts through the matrix $(M(A)^{-1})^t$ in the dual basis in $H^1(M,\mathbb{R})$. It follows that we can compute the groups $\Gamma(M)$, $\Gamma_{\perp}(M)$ and their Zariski closures by considering the action of $\textrm{Aff}(M)$ on $H_1(M,\mathbb{R})$ and $H_1^{\perp}(M,\mathbb{R})$ (i.e., we can use homology instead of cohomology). We will use this fact later, in the proof of Proposition \ref{P:st} below.
\end{rem}


\subsection{Outline and strategy.}

\begin{prop}\label{P:goodinv}
Let $M$ be a translation surface. The set of \good planes at $M$ is invariant under  $\overline{\Gamma}(M)$.
\end{prop}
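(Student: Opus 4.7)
The plan is to reduce Proposition \ref{P:goodinv} to two ingredients: (i) that $\Gamma(M)$ itself already preserves the set of \good planes, and (ii) that this set is Zariski-closed inside the real Grassmannian of $2$-planes in $H^1(M,\bR)$. Given both, $\overline{\Gamma}(M)$-invariance will follow from the standard principle that the setwise stabilizer of a Zariski-closed subvariety under an algebraic action is itself a Zariski-closed subgroup, so any subgroup contained in this stabilizer has its Zariski closure contained in it as well.

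For ingredient (i), I would take $\phi\in\textrm{Aff}(M)$ with derivative $A\in SL(M)$ and observe that, although $\phi$ is not holomorphic as a self-map of $M$, it is nevertheless a holomorphic isomorphism of Riemann surfaces from $hAM$ to $hM$ for every $h\in SL(2,\bR)$. This is a direct chart computation: if $z$ is a translation chart of $M$ in which $\phi$ reads $z\mapsto Az+c$, then $hAz$ is a translation chart on $hAM$ and $hz$ is one on $hM$, and in these charts $\phi$ becomes the translation $hAz\mapsto hAz+hc$. Consequently, the pullback $\phi^*$, regarded as a self-map of the (topologically defined) space $H^1(M,\bC)$, sends $H^{1,0}(hM)$ onto $H^{1,0}(hAM)$. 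If $P$ is a \good plane, then for every $h\in SL(2,\bR)$,
\[
\phi^*(P)_{\bC}\cap H^{1,0}(hM) \;=\; \phi^*\!\bigl(P_{\bC}\cap H^{1,0}(hA^{-1}M)\bigr),
\]
which has complex dimension one because $P$ is \good; hence $\phi^*(P)$ is again \good.

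For ingredient (ii), I would first record an automatic upper bound: since $P$ is a real $2$-plane, $P_{\bC}$ is stable under complex conjugation, and conjugation interchanges $P_{\bC}\cap H^{1,0}(hM)$ with $P_{\bC}\cap H^{0,1}(hM)$. These two subspaces are transverse in $P_{\bC}$ (because $H^{1,0}(hM)\cap H^{0,1}(hM)=0$), forcing each to have complex dimension at most $1$. Hence being \good is equivalent to the condition $P_{\bC}\cap H^{1,0}(hM)\neq 0$ for every $h$, and for each fixed $h$ this non-vanishing is a single determinantal, hence Zariski-closed, condition on $P$. By Noetherianity of the Grassmannian, the intersection of these closed conditions over all $h\in SL(2,\bR)$ is still Zariski-closed.

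The main technical subtlety will be ingredient (i), specifically the intertwining identity $\phi^*(H^{1,0}(hM))=H^{1,0}(hAM)$: although intuitively clear from the chart computation, it requires being careful about the various identifications of cohomology along the $SL(2,\bR)$-orbit and about the fact that the single topological self-map $\phi$ of the underlying surface can simultaneously play the role of a map between two different complex structures. Once that identity is secured, ingredient (ii) and the final passage from $\Gamma(M)$ to $\overline{\Gamma}(M)$ are essentially formal.
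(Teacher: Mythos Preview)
Your proposal is correct and follows essentially the same route as the paper: show that the set of \good planes is a Zariski-closed subvariety of the Grassmannian (via a determinantal/rank condition for each $h\in SL(2,\bR)$), observe that $\Gamma(M)$ preserves it, and conclude that its Zariski closure $\overline{\Gamma}(M)$ does too. The paper simply asserts $\Gamma(M)$-invariance without justification, whereas you supply the explicit chart computation showing $\phi\in\Aff(M)$ is biholomorphic $hAM\to hM$; otherwise the arguments coincide.
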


\begin{prop}\label{P:st}
In each of $\cH(4)^{hyp}$ and $\cH(4)^{odd}$, there is a translation surface $M$ with $\oGp(M)=Sp(4,\bR)$.
\end{prop}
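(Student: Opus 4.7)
The plan is to exhibit, for each of $\cH(4)^{hyp}$ and $\cH(4)^{odd}$, a concrete square-tiled surface $M$ of low complexity (ideally tiled by a small number of unit squares), write down enough elements of $\Aff(M)$ to generate a subgroup of finite index in $\Gp(M)$, compute the matrices of these affine elements in an explicit symplectic basis of $H^1_\perp(M,\bR)$, and then check that these matrices Zariski-generate the full symplectic group $Sp(4,\bR)$. The two components are handled separately but by the same method; one well-known way to land in the correct component is to choose origamis arising from Thurston--Veech constructions with prescribed cylinder decompositions (one can match the hyperelliptic/odd invariant by fixing the monodromy type, e.g.\ using commutators of the horizontal and vertical permutations). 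Note that $\dim_\bR H^1_\perp(M,\bR)=4$ in genus $3$, so the target group $Sp(4,\bR)$ has dimension $10$, and the symplectic form on $H^1_\perp$ is the restriction of the cup product pairing.

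\textbf{Getting affine elements.} For a square-tiled surface $M$ described by a pair of permutations $(\sigma_h,\sigma_v)$ on $\{1,\ldots,N\}$, the horizontal and vertical cylinder decompositions give parabolic elements of $\Aff(M)$ whose derivatives are powers of the standard horizontal and vertical unit parabolics (the powers being the lcms of the corresponding cylinder lengths). I would choose the examples so that these parabolics are directly computable and, together with an appropriate automorphism (if present) or a product thereof, give matrices $A_1,\ldots,A_r\in Sp(H^1_\perp(M,\bR))\cong Sp(4,\bR)$ whose entries can be written down explicitly. This reduces the problem to a purely linear-algebraic question about a small set of explicit $4\times 4$ symplectic integer matrices.

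\textbf{Zariski density.} To prove that the group generated by $A_1,\ldots,A_r$ has Zariski closure equal to $Sp(4,\bR)$, I would use the standard classification of connected proper algebraic subgroups of $Sp(4,\bR)$: such a subgroup is either reducible on the standard representation $\bR^4$, or it preserves a nonzero tensor beyond the symplectic form (equivalently, after extending scalars to $\bC$, it is contained up to conjugation in one of $GL(2,\bR)$, $SL(2,\bR)\times SL(2,\bR)$, $Sp(2,\bC)$, $SU(2)$-type compact forms, or a Levi of a parabolic). Concretely, I would
\begin{enumerate}[leftmargin=*]
  \item rule out a common invariant proper subspace by showing that no nonzero vector is simultaneously fixed (up to scalar) by all $A_i$, and that the $A_i$ do not share an invariant Lagrangian or an invariant $2$-plane; this can be checked directly by intersecting kernels of $A_i-\lambda\,\Id$ for the relevant eigenvalues;
  \item rule out a compact Zariski closure by exhibiting one of the $A_i$ with an eigenvalue off the unit circle (the parabolic-from-a-cylinder elements are unipotent, so one instead combines them: a product of a horizontal and a vertical multi-twist of the form $\left(\begin{smallmatrix}1&n\\0&1\end{smallmatrix}\right)\left(\begin{smallmatrix}1&0\\m&1\end{smallmatrix}\right)$ with $nm>4$ is hyperbolic on the tautological plane, and I expect the analogous elements to be hyperbolic on $H^1_\perp$ as well);
  \item rule out reducible but semisimple closures (e.g.\ the $SL(2)\times SL(2)$-case or the $GL(2)$-in-$Sp(4)$-case) by finding an element whose characteristic polynomial on $H^1_\perp$ is irreducible over $\bQ$, or, failing that, by producing two elements $A_i,A_j$ such that $[A_i,A_j]$ does not preserve any decomposition $\bR^4=V_1\oplus V_2$ that $A_i$ or $A_j$ preserves.
\end{enumerate}
An efficient bookkeeping device is to compute the Lie algebra spanned by $\{\log(\text{unipotent } A_i)\}\cup\{A_jXA_j^{-1}-X\}$ directly and check it has dimension $10$; this avoids any case analysis of proper algebraic subgroups.

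\textbf{Main obstacle.} The conceptual steps above are standard, so the real work is computational: finding origamis small enough that the matrices and their commutators can be written down cleanly, yet rich enough that $\oGp(M)$ is all of $Sp(4,\bR)$ and not one of the exceptional closures listed above. In particular, for the hyperelliptic component one must take care that the hyperelliptic involution, which acts as $-\Id$ on $H^1_\perp$, does not force the Zariski closure into a smaller group — this is harmless since $-\Id\in Sp(4,\bR)$, but it does mean that $\Gp(M)$ will often contain $-\Id$ and one must supplement it with genuinely new elements. I expect that an origami with $6$--$9$ squares in each component, chosen so that the horizontal and vertical cylinder decompositions have coprime lengths and incommensurable moduli, will suffice; if necessary, one can fall back on a slightly larger example and verify the Lie algebra computation by machine.
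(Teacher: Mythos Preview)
Your proposal is correct and essentially matches the paper's approach. The paper uses two explicit $6$-square origamis (one in each component), computes three Dehn multitwists on each (in the horizontal, vertical, and one diagonal direction), and then verifies Zariski density via exactly your ``efficient bookkeeping device'': it takes the nilpotent $\log$ of one unipotent twist matrix, writes down nine conjugates of it by short words in the other generators, and checks by direct computation that these ten elements of $\mathfrak{sp}(4,\bR)$ are linearly independent.
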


\begin{proof}[\eb{Proof of Theorem \ref{T:basecase} using Propositions \ref{P:goodinv} and \ref{P:st}.}]
Let $M$ be either of the translation surfaces given by Proposition \ref{P:st}. Our goal is to show that the only \good plane at $M$ is the tautological plane.

Suppose, in order to find a contradiction, that there is a \good plane $P_0$ at $M$ which is not the tautological plane. Consider the projection map $\rho: H^1(M,\bR)\to H^1_\perp(M,\bR)$. Set $P=\rho(P_0)$. Since the kernel of $\rho$ is exactly the tautological plane, we see that the subspace $P$ is at least one dimensional.
 Since the complexification of $\rho$ preserves $H^{1,0}(M)$ and $H^{0,1}(M)$
we see that the complexification of $P$ intersects at least one of $H^{1,0}(M)$ or $H^{0,1}(M)$. Since $P$ is a real subspace, and $H^{1,0}(M)$ and $H^{0,1}(M)$ are complex conjugate, it follows that $P_\bC=P\otimes\mathbb{C}$ intersects both of $H^{1,0}(M)$ and $H^{0,1}(M)$. In particular, $P$ is two dimensional.

For all $\newh \in SL(2,\bR)$, there is the projection map $\rho: H^1(\newh M,\bR)\to H^1_\perp(\newh M,\bR)$ to the orthogonal complement of the tautological plane.

Define $\newh P\subset H^1(\newh M,\bR)$ as in \ref{SS:lims}. Since $P_0$ is a \good plane, $\newh P_0$ intersects both  $H^{1,0}(\newh M)$ and $H^{0,1}(\newh M)$. Since $\rho$ preserves $H^{1,0}(\newh M)$ and $H^{0,1}(\newh M)$, as above we see that $\newh P = \rho(\newh P_0)$ also intersects both $H^{1,0}(\newh M)$ and $H^{0,1}(\newh M)$. Hence $P$ is a \good plane. 

By  Proposition \ref{P:goodinv}, it follows that  $\gamma(P)$ is a \good plane for all $\gamma$ in the full symplectic group $Sp(H^1_\perp(X,\bR))$ on $H^1_\perp(X,\bR)$.

However, the full symplectic group acts transitively on the set of symplectic planes, and not all symplectic planes in $H^1_\perp(X,\bR)$ are \good. This is a contradiction.

To see that not all symplectic planes in $H^1_\perp(X,\bR)$ are \good, simply note that the complexification of most symplectic planes do not intersect $H^{1,0}$ or $H^{0,1}$. The set of symplectic planes is open in the set of planes, whereas the set of planes whose complexification intersects $H^{1,0}$ has positive codimension.
\end{proof}


\subsection{The variety of \good planes.} Here we prove Proposition \ref{P:goodinv}. The proof also shows that the set of \good planes at any translation surface is a projective variety.

\begin{proof}[\eb{Proof of Proposition \ref{P:goodinv}.}]
Fix a marking on $M$.
For each {$\newh \in SL(2,\bR)$}, define $H^{1,0}_{\newh }\subset H^1(M, \bC)$ by $H^{1,0}_{\newh } = \newh^{-1} (H^{1,0}(\newh M))$. Here we consider $H^{1,0}(\newh M)$ to be a subset of $H^1(\newh M, \bC)=H^1(\newh M, \bR)\otimes \bC$, and $\newh^{-1}$ acts on this subset via the usual action discussed in Section 2.  

A plane $P\subset H^1(M,\bR)$ is \good if and only if $P_\bC=P\otimes\mathbb{C}$ intersects $H^{1,0}_{\newh }$ non-trivially for all {$\newh \in SL(2,\bR)$}.

For each $\newh $, the condition that $P_\bC$ intersects $H^{1,0}_{\newh }$ non-trivially is a polynomial condition on the Grassmanian of planes $P$ in $H^1(X,\bR)$. (List a basis for $H^{1,0}_{\newh }$ next to a basis of $P$. This matrix should have rank at most $g+1$, which is equivalent to the determinant of every $g+2$ by $g+2$ minor being 0.)

Thus the set $\cP$ of \good planes at $M$ is defined by polynomial equations, finitely many for each $\newh $. (Thus in total, there are uncountably many polynomial equations, but this is not relevant.) In other words, $\cP$ is a subvariety of the Grassmanian of planes in $H^1(M,\bR)$. It's stabilizer contains $\Gamma(M)$, and hence the Zariski closure $\overline{\Gamma}(M)$. By definition, anything in the stabilizer of $\cP$ in $\textrm{Sp}(H^1(X,\bR))$ preserves the set $\cP$ of \good planes.
\end{proof}


\subsection{Square-tiled surfaces.} A translation surface $M=(X,\omega)$ is called a \emph{square-tiled surface} if there is a ramified finite cover $\pi:X\to\mathbb{R}^2/\mathbb{Z}^2$ unramified outside $0\in \mathbb{R}^2/\mathbb{Z}^2$ such that $\omega=\pi^*(dz)$. Equivalently, $(X,\omega)$ is a square-tiled surface if the relative periods of $\omega$ (i.e., the integrals of $\omega$ along paths joining two of its zeroes) belong to $\mathbb{Z}\oplus i\mathbb{Z}$.

Combinatorially, a square-tiled surface $(X,\omega)$ associated to a covering $\pi:X\to\mathbb{R}^2/\mathbb{Z}^2$ of degree $N$ is determined by a pair of permutations $h,v\in S_N$ acting transitively on $\{1,\dots, N\}$. Given a square-tiled surface, we may fix a numbering of the $N$ squares, i.e., a bijection from the set of squares to $\{1,\dots, N\}$. We may then define a pair of permutations $h,v\in S_N$ by declaring that $h(i)$, resp. $v(i)$ is the number of the square to the right, resp. on the top, of the $i$-th square.

Given $M=(X,\omega)$ a square-tiled surface associated to $\pi:M\to\mathbb{R}^2/\mathbb{Z}^2$, it is a simple task to identify the subspace $H_1^{\perp}(M,\mathbb{R})$. Indeed, as it is shown in \cite{MY}, $H_1^{\perp}(M,\mathbb{R})$ consists of all absolute homology classes on $M$ projecting to $0$ under the covering map $\pi:M\to\mathbb{R}^2/\mathbb{Z}^2$. We will use this elementary fact to compute the action of the affine group $\textrm{Aff}(M)$ on $H_1^\perp(M,\mathbb{R})$ for certain square-tiled surfaces of genus $3$.


\subsection{Connected components of $\cH(4)$.} Using a combinatorial object (extended Rauzy classes), Veech \cite{V90} showed that the minimal stratum $\cH(4)$ of genus $3$ translation surfaces has two connected components. More recently, Kontsevich and Zorich \cite{KZ} classified all connected components of all strata of genus $g\geq 2$ translation surfaces. For our purposes, it suffices to know that the two (\emph{hyperelliptic} and \emph{odd}) connected components $\cH(4)^{\textrm{hyp}}$ and $\cH(4)^{\textrm{odd}}$ can be distinguished via the parity of the spin structure.

Given a translation surface $M=(X,\omega)$ of genus $g\geq 1$, we select a canonical symplectic basis $\{\alpha_i, \beta_i: i=1,\dots, g\}$ and we compute the Arf invariant
\begin{eqnarray*}
&&\Phi(M, \{\alpha_i,\beta_i:i=1,\dots,g\})\\&=&\sum\limits_{i=1}^g (\textrm{ind}_{\omega}(\alpha_i)+1)(\textrm{ind}_{\omega}(\beta_i)+1) \, (\textrm{mod }2) ,
\end{eqnarray*}
where $\textrm{ind}_{\omega}(\gamma)$ is the degree of the Gauss map associated to the tangents of a curve $\gamma$ not passing through the zeroes of $\omega$. It can be shown that the Arf invariant is independent of the choice of canonical basis of homology and thus it defines a quantity $\Phi(M)\in\bZ/(2\bZ)$ that is called the parity of the spin structure of $M$.

In our setting, $M\in\cH(4)^{\textrm{hyp}}$, resp. $M\in\cH(4)^{\textrm{odd}}$, if and only if $\Phi(M)=0$, resp. $\Phi(M)=1$.

\begin{rem}Alternatively, one can show that $M\in\cH(4)^{\textrm{hyp}}$ if and only if $M$ admits an involution (i.e., an element of its affine group $\textrm{Aff}(M)$ with derivative $-Id$) having exactly $8$ fixed points.
\end{rem}




\subsection{Monodromy of a square-tiled surface in $\cH(4)^{\textrm{odd}}$.} Let $M_{\ast}\in \cH(4)^{\textrm{odd}}$ be the square-tiled surface associated to the pair of permutations $h_{\ast}=(1)(2,3)(4,5,6)$ and $v_{\ast}=(1,4,2)(3,5)(6)$ (see Figure \ref{F:H4odd}).

\begin{figure}[h]
\def\svgwidth{1.09\columnwidth}
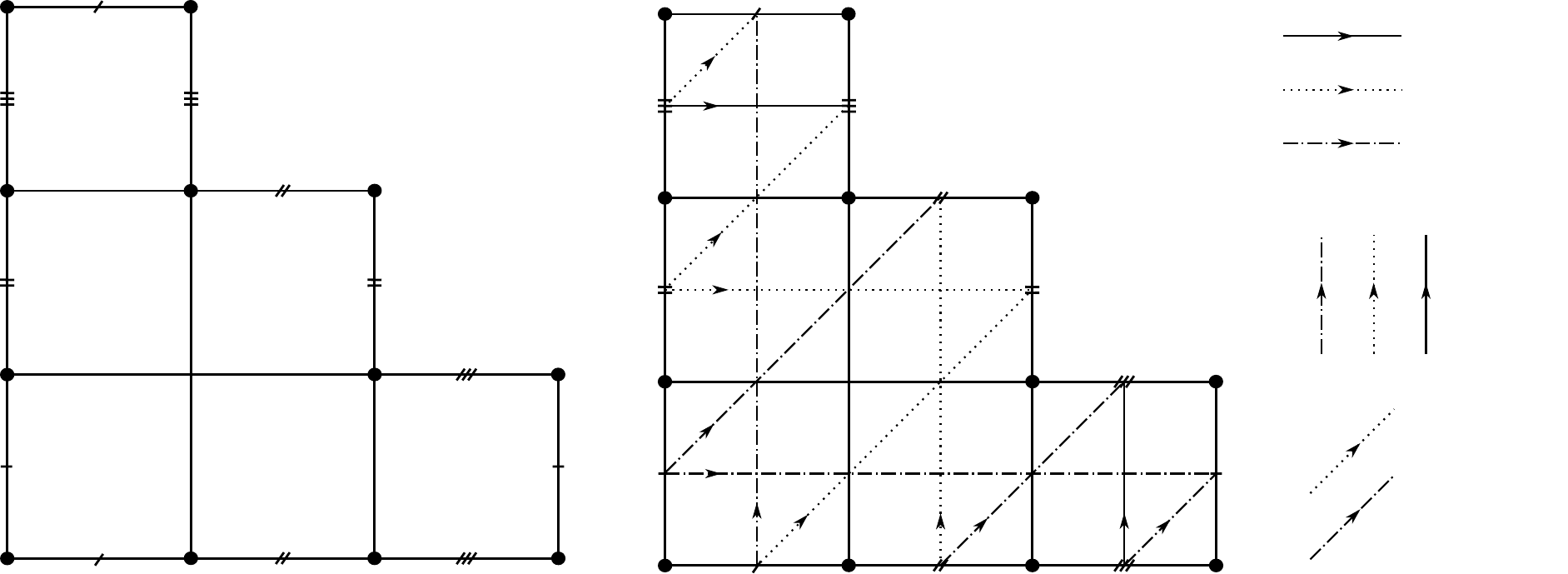
\caption{The square-tiled surface $M_\ast$ (left), redrawn (center) with the curves referred to below, and index (right).}
\label{F:H4odd}
\end{figure}

\begin{lem}\label{L:ABC}
Working in a suitable symplectic basis, $\Gp(M_\ast)\subset Sp(4,\bZ)$ contains the transposes of the inverses of the matrices
\setlength\arraycolsep{2pt}
\begin{eqnarray*}
&& \tiny{A_*=\left(\begin{array}{cccc} 1 & 0 & 3 & 3 \\ 0 & 1 & -2 & -4 \\ 0 & 0 & 1 & 0 \\ 0 & 0 & 0 & 1 \end{array}\right), \quad
B_*=\left(\begin{array}{cccc} 1 & 0 & 0 & 0 \\ 0 & 1 & 0 & 0 \\ 3 & 3 & 1 & 0 \\ -2 & -4 & 0 & 1 \end{array}\right),\quad
C_*=\left(\begin{array}{cccc} 2 & 2 & 1 & 2 \\ -1 & -1 & -1 & -2 \\ -1 & -2 & 0 & -2 \\ 1 & 2 & 1 & 3\end{array}\right).}
\end{eqnarray*}
\setlength\arraycolsep{6pt}
\end{lem}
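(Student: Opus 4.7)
The strategy is to exhibit three specific elements of $\mathrm{Aff}(M_*)$, compute their action on the $4$-dimensional subspace $H_1^\perp(M_*, \bR)$ in a carefully chosen symplectic basis, and then appeal to Remark \ref{R:monodromy-duality} to transfer this to $H^1_\perp(M_*, \bR)$ via the inverse-transpose. In the chosen basis the homological action will be recorded as $A_*, B_*, C_*$, so that the cohomological action appearing in $\Gp(M_*)$ is $A_*^{-T}, B_*^{-T}, C_*^{-T}$.

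First, I would examine the cylinder decompositions of $M_*$. Reading off the cycle structure of $h_* = (1)(2,3)(4,5,6)$, the horizontal decomposition consists of three cylinders of widths $1, 2, 3$ (and common height $1$) with core curves $\sigma_0, \sigma_1, \sigma_2$. Since $\mathrm{lcm}(1,2,3) = 6$, there is an affine diffeomorphism $\phi_h \in \mathrm{Aff}(M_*)$ with derivative $\bigl(\begin{smallmatrix} 1 & 6 \\ 0 & 1 \end{smallmatrix}\bigr)$, acting as six Dehn twists around $\sigma_0$, three around $\sigma_1$, and two around $\sigma_2$. Analogously, $v_* = (1,4,2)(3,5)(6)$ produces vertical cylinders of heights $3, 2, 1$ with cores $\zeta_0, \zeta_1, \zeta_2$, and a multi-twist $\phi_v$ with derivative $\bigl(\begin{smallmatrix} 1 & 0 \\ 6 & 1 \end{smallmatrix}\bigr)$.

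Next, I would fix a symplectic basis of $H_1^\perp(M_*, \bR)$. Using the description in Section 4.4, $H_1^\perp$ is the kernel of $\pi_*: H_1(M_*) \to H_1(\bT^2)$; in particular the differences $\sigma_0-\sigma_1, \sigma_1 - \sigma_2$ and $\zeta_0-\zeta_1, \zeta_1-\zeta_2$ lie there. Together with the curves $\delta_1, \delta_2$ from Figure \ref{F:H4odd} (after correction terms to ensure they lie in $\ker \pi_*$ and satisfy the correct intersection relations), one produces four classes with the standard Darboux pattern. The Picard--Lefschetz formula applied to $\phi_h$ then fixes the first two basis vectors and shears the other two by combinations of them, with coefficients determined by the intersection numbers of the $\delta_j$ with the $\sigma_i$, weighted by the Dehn twist exponents $6, 3, 2$. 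A direct calculation produces $A_*$; the analogous computation using $\phi_v$ and the vertical data produces $B_*$.

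The matrix $C_*$ is not unipotent, so it cannot arise from a single multi-twist in a single direction. The plan is to identify a third affine diffeomorphism of $M_*$ — the natural candidates being a parabolic in another rational direction (for instance the $(1,1)$ direction, which by inspection decomposes $M_*$ into cylinders with commensurable moduli), or a composition of elements of $\mathrm{Aff}(M_*)$ coming from this direction with $\phi_h$ and $\phi_v$. The main obstacle is twofold: locating this third affine diffeomorphism and then bookkeeping the action so that it is expressed in the \emph{same} symplectic basis $(e_1, e_2, f_1, f_2)$ used for $A_*$ and $B_*$. Once this is done, the inverse-transpose relation from Remark \ref{R:monodromy-duality} yields the displayed matrices in $\Gp(M_*)$, and the symplecticity of each $A_*, B_*, C_*$ provides a consistency check on the computation.
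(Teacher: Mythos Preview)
Your overall strategy matches the paper's proof exactly: the paper uses the horizontal and vertical multi-twists with derivatives $\bigl(\begin{smallmatrix}1&6\\0&1\end{smallmatrix}\bigr)$ and $\bigl(\begin{smallmatrix}1&0\\6&1\end{smallmatrix}\bigr)$, plus a third multi-twist, computes their action on $H_1^\perp(M_*,\bR)$ in an explicit basis, and then invokes Remark~\ref{R:monodromy-duality}.

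There is one genuine misconception in your proposal. You assert that $C_*$ is not unipotent and therefore cannot come from a single multi-twist. In fact $C_*$ \emph{is} unipotent: one checks that $C_*-I$ has rank one and $(C_*-I)^2=0$. Correspondingly, the paper's third affine diffeomorphism $C$ is precisely the multi-twist in the slope-$1$ direction that you yourself propose as a candidate. Its derivative is $dC=\bigl(\begin{smallmatrix}-2&3\\-3&4\end{smallmatrix}\bigr)$, which is parabolic (trace $2$). In that direction $M_*$ decomposes into two cylinders with waist curves $\delta_1=\sigma_1+\sigma_0+\zeta_2$ and $\delta_2=\sigma_2+\zeta_1+\zeta_0$; no composition with $\phi_h,\phi_v$ is needed.

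On the basis: your suggested classes $\sigma_0-\sigma_1$, $\sigma_1-\sigma_2$, etc., do lie in $H_1^\perp$, but they will not reproduce the displayed matrices $A_*,B_*,C_*$ on the nose. The paper's basis is
\[
\overline{\sigma}_1=\sigma_1-2\sigma_0,\quad \overline{\sigma}_2=\sigma_2-3\sigma_0,\quad \overline{\zeta}_1=\zeta_1-2\zeta_0,\quad \overline{\zeta}_2=\zeta_2-3\zeta_0,
\]
i.e.\ each core curve corrected by the appropriate multiple of the short core so that it projects to zero on the torus. With this choice, writing out the Dehn-twist action (e.g.\ $A(\zeta_1)=\zeta_1+3\sigma_1+2\sigma_2$, etc.) and restricting to the span of the $\overline{\sigma}_i,\overline{\zeta}_j$ gives exactly $A_*,B_*,C_*$. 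The $\delta_j$ are not used as basis elements; they only enter in expressing the action of $C$.
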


\begin{proof}
Note that $M_{\ast}$ decomposes into three horizontal, resp. vertical, cylinders whose waist curves $\sigma_0$, $\sigma_1$ and $\sigma_2$, resp. $\zeta_0$, $\zeta_1$ and $\zeta_2$, have lengths $1$, $2$ and $3$. Also, $M_{\ast}$ decomposes into two cylinders in the slope $1$ direction whose waist curves $\delta_1$ and $\delta_2$ are defined by the property that $\delta_1$ intersects $\sigma_0$ and $\delta_2$ intersects $\zeta_2$. By direct inspection, it is not hard to check that $\delta_1=\sigma_1+\sigma_0+\zeta_2$ and $\delta_2=\sigma_2+\zeta_1+\zeta_0$.

Denote by $A, B, C\in\textrm{Aff}(M_{\ast})$ be the affine diffeomorphisms of $M_{\ast}$ with linear parts $$dA=\left(\begin{array}{cc} 1 & 6 \\ 0 & 1\end{array}\right),\quad dB=\left(\begin{array}{cc} 1 & 0 \\ 6 & 1\end{array}\right),\quad dC=\left(\begin{array}{cc} -2 & 3 \\ -3 & 4\end{array}\right).$$ Geometrically, $A, B, C$ correspond to certain Dehn multitwists in the horizontal, vertical and slope $1$ directions.

The actions of $A, B, C$ on homology $H_1(M_{\ast},\mathbb{R})$ are:
\small{\begin{align*}
&A(\sigma_i)=\sigma_i&
&\textrm{for } i=1, 2, 3,& &&\\
&A(\zeta_0)=\zeta_0+2\sigma_2,&
&A(\zeta_1)=\zeta_1+3\sigma_1+2\sigma_2,&
&A(\zeta_2)=\zeta_2+3\sigma_1+2\sigma_2+6\sigma_0,&\\
&B(\sigma_0)=\sigma_0+2\zeta_2,&
&B(\sigma_1)=\sigma_1+3\zeta_1+2\zeta_2, &
&B(\sigma_2)=\sigma_2+3\zeta_1+2\zeta_2+6\zeta_0,&\\
&B(\zeta_i)=\zeta_i &
&\textrm{for } i=1, 2, 3,& &&\\
&C(\sigma_0)=\sigma_0-\delta_1,&
&C(\sigma_1)=\sigma_1-\delta_1-\delta_2, &
&C(\sigma_2)=\sigma_2-\delta_1-2\delta_2,&\\
&C(\zeta_0)=\zeta_0+\delta_2,&
&C(\zeta_1)=\zeta_1+\delta_1+\delta_2, &
&C(\zeta_2)=\zeta_2+2\delta_1+\delta_2.&
\end{align*}}
Note that $\overline{\sigma}_1:=\sigma_1-2\sigma_0$, $\overline{\sigma}_2:=\sigma_2-3\sigma_0$, $\overline{\zeta}_1:=\zeta_1-2\zeta_0$ and $\overline{\zeta}_2:=\zeta_2-3\zeta_0$ form a basis of $H_1^{\perp}(M_{\ast},\mathbb{R})$, the symplectic orthogonal of the tautological subbundle $H_1^{st}(M_{\ast},\mathbb{R})$ spanned by $\sigma=\sigma_0+\sigma_1+\sigma_2$ and $\zeta=\zeta_0+\zeta_1+\zeta_2$. In this basis, the actions of $A, B, C$ on $H_1^{\perp}(M_{\ast},\mathbb{R})$ are given by the symplectic matrices above. By Poincar\'e duality (see Remark \ref{R:monodromy-duality} above), it follows that $(A_*^{-1})^t, (B_*^{-1})^t, (C_*^{-1})^t\in \Gamma_{\perp}(M_*)$.
\end{proof}


\subsection{Monodromy of a square-tiled surface in $\cH(4)^{\textrm{hyp}}$} Let $M_{\ast\ast}\in \cH(4)^{\textrm{hyp}}$ be the square-tiled surface associated to the pair of permutations $h_{\ast\ast}=(1)(2,3)(4,5,6)$ and $v_{\ast\ast}=(1,2)(3,4)(5)(6)$ (see Figure \ref{F:H4hyp}).

\begin{figure}[h]
\def\svgwidth{1.06\columnwidth}
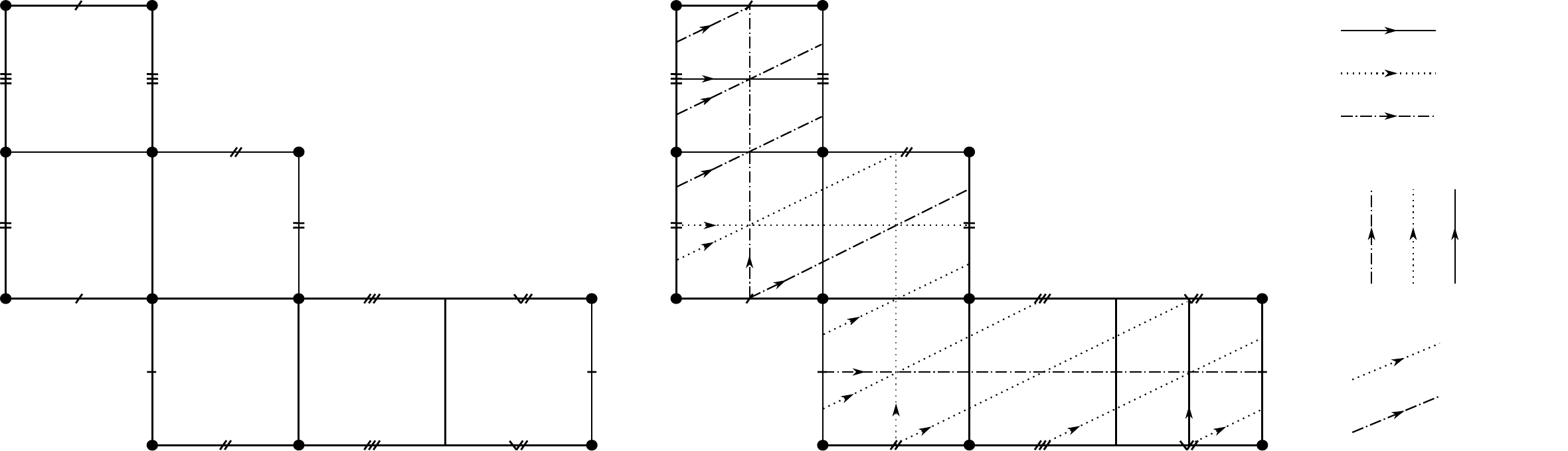
\caption{The square-tiled surface $M_{\ast\ast}$ (left), redrawn (center) with the curves referred to below, and index (right).}
\label{F:H4hyp}
\end{figure}

\begin{lem}\label{L:DEF}
Working in a suitable symplectic basis, $\Gp(M_{\ast\ast})\subset Sp(4,\bZ)$ contains the transposes of the inverses of the matrices
\setlength\arraycolsep{2pt}
\begin{eqnarray*}
&& \tiny{D_*=\left(\begin{array}{cccc} 1 & 0 & 3 & 3 \\ 0 & 1 & -2 & -4 \\ 0 & 0 & 1 & 0 \\ 0 & 0 & 0 & 1 \end{array}\right), \quad
E_*=\left(\begin{array}{cccc} 1 & 0 & 0 & 0 \\ 0 & 1 & 0 & 0 \\ 1 & 1 & 1 & 0 \\ -1 & -3 & 0 & 1 \end{array}\right),\quad
F_*=\left(\begin{array}{cccc} 2 & 3 & 1 & 3 \\ -2 & -5 & -2 & -6 \\ -1 & -3 & 0 & -3 \\ 2 & 6 & 2 & 7\end{array}\right).}
\end{eqnarray*}
\setlength\arraycolsep{6pt}
\end{lem}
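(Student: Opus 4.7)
The proof will proceed in direct parallel with that of Lemma \ref{L:ABC}: we produce three affine diffeomorphisms of $M_{\ast\ast}$ corresponding to multitwists in suitable directions, compute their action on a basis of $H_1(M_{\ast\ast},\bR)$ adapted to cylinder decompositions, restrict to $H_1^\perp(M_{\ast\ast},\bR)$, and finally invoke Poincar\'e duality via Remark \ref{R:monodromy-duality}.

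First I would identify the three cylinder decompositions of $M_{\ast\ast}$ that will be used. From the permutations $h_{\ast\ast}=(1)(2,3)(4,5,6)$ and $v_{\ast\ast}=(1,2)(3,4)(5)(6)$, $M_{\ast\ast}$ splits into three horizontal cylinders, with waist curves $\sigma_0, \sigma_1, \sigma_2$ of lengths $1, 2, 3$, and into three vertical cylinders with waist curves $\zeta_0, \zeta_1, \zeta_2$. I would then determine the slope $1$ decomposition and write the corresponding waist curves $\delta_1, \delta_2$ as explicit $\bZ$-combinations of the $\sigma_i$ and $\zeta_j$, exactly as in the $M_\ast$ case; this is a purely combinatorial inspection of Figure~\ref{F:H4hyp}.

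Next, I would define $D, E, F \in \Aff(M_{\ast\ast})$ as Dehn multitwists in the horizontal, vertical, and slope $1$ directions respectively, with linear parts chosen to be integer matrices of the form $\begin{pmatrix} 1 & a \\ 0 & 1 \end{pmatrix}$, $\begin{pmatrix} 1 & 0 \\ b & 1 \end{pmatrix}$, and an appropriate hyperbolic element of $SL(2,\bZ)$ conjugate into the slope $1$ direction. The constants $a, b$ and the slope $1$ multitwist coefficient must be the least common multiples of the moduli of the corresponding cylinder decompositions; these will typically differ from the $M_\ast$ case because the cylinder heights are different (in particular, the first column of $E_*$ involves $1$ and $-1$ where $B_*$ had $3$ and $-2$, reflecting a different vertical decomposition). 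Each $D, E, F$ acts on the waist curves of its own direction trivially and on the transverse waist curves by shearing, with shears given explicitly by the corresponding cylinder heights.

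I would then write down these actions on the spanning set $\{\sigma_0, \sigma_1, \sigma_2, \zeta_0, \zeta_1, \zeta_2\}$, using $\delta_1, \delta_2$ as intermediate expressions for the slope $1$ action. Passing to the symplectic basis $\overline{\sigma}_1 = \sigma_1 - 2\sigma_0$, $\overline{\sigma}_2 = \sigma_2 - 3\sigma_0$, $\overline{\zeta}_1 = \zeta_1 - 2\zeta_0$, $\overline{\zeta}_2 = \zeta_2 - 3\zeta_0$ of $H_1^\perp(M_{\ast\ast}, \bR)$ (the symplectic complement of the tautological subbundle spanned by $\sigma = \sigma_0 + \sigma_1 + \sigma_2$ and $\zeta = \zeta_0 + \zeta_1 + \zeta_2$), I would read off the matrices of $D, E, F$ acting on $H_1^\perp$. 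By Remark \ref{R:monodromy-duality}, the corresponding elements of $\Gamma_\perp(M_{\ast\ast})$ acting on cohomology are the inverse-transposes of these homology matrices, which is exactly the statement of the lemma with matrices $D_*, E_*, F_*$.

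The main obstacle will be the slope $1$ multitwist $F$: identifying the precise decomposition (two cylinders with waist curves $\delta_1, \delta_2$), expressing $\delta_1, \delta_2$ in terms of the $\sigma_i, \zeta_j$, and then correctly computing the action on each waist curve requires careful bookkeeping with orientations. The horizontal and vertical pieces ($D$ and $E$) are straightforward cylinder shears. One should also verify at the end that $D, E, F$ fix the tautological plane (which they must, since they come from elements of the Veech group), providing a consistency check that the computed matrices indeed act only on the symplectic complement.
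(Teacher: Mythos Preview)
Your overall strategy is exactly the one the paper follows, but two concrete details in your proposal are wrong and would prevent you from recovering the stated matrices.

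First, the vertical cylinder decomposition of $M_{\ast\ast}$ is not the same shape as that of $M_\ast$: the waist curves $\zeta_0,\zeta_1,\zeta_2$ have lengths $1,2,2$ (not $1,2,3$). You partly anticipated this when you noted that the first column of $E_*$ differs from that of $B_*$, but you then wrote $\overline{\zeta}_2=\zeta_2-3\zeta_0$. The correct basis vector is $\overline{\zeta}_2=\zeta_2-2\zeta_0$; with your choice the resulting matrix for $E$ would not be $E_*$. (Correspondingly, the linear part of $E$ is $\left(\begin{smallmatrix}1&0\\2&1\end{smallmatrix}\right)$, not a shear by $6$.)

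Second, the diagonal direction used for $F$ is slope $1/2$, not slope $1$. The two diagonal cylinders have waist curves $\delta_1=\sigma_1+2\sigma_0+\zeta_2$ and $\delta_2=\sigma_1+2\sigma_2+\zeta_1+2\zeta_0$, and the linear part of the multitwist $F$ is $\left(\begin{smallmatrix}-7&16\\-4&9\end{smallmatrix}\right)$. Using slope $1$ as you propose would either fail to give a two-cylinder decomposition or would produce a different matrix, in any case not $F_*$.

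With these two corrections your outline matches the paper's proof exactly.
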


\begin{proof}
Note that $M_{\ast\ast}$ decomposes into three horizontal, resp. vertical, cylinders whose waist curves $\sigma_0$, $\sigma_1$ and $\sigma_2$, resp. $\zeta_0$, $\zeta_1$ and $\zeta_2$, have lengths $1$, $2$ and $3$, resp. $1$, $2$ and $2$. Also, $M_{\ast\ast}$ decomposes into two cylinders in the slope $1/2$ direction whose waist curves $\delta_1$ and $\delta_2$ are defined by the property that $\delta_1$ intersects $\sigma_0$ and $\delta_2$ intersects $\zeta_0$. By direct inspection, it is not hard to check that $\delta_1=\sigma_1+2\sigma_0+\zeta_2$ and $\delta_2=\sigma_1+2\sigma_2+\zeta_1+2\zeta_0$.

Denote by $D, E, F\in\textrm{Aff}(M_{\ast\ast})$ be the affine diffeomorphisms of $M_{\ast\ast}$ with linear parts $$dD=\left(\begin{array}{cc} 1 & 6 \\ 0 & 1\end{array}\right),\quad dE=\left(\begin{array}{cc} 1 & 0 \\ 2 & 1\end{array}\right),\quad dF=\left(\begin{array}{cc} -7 & 16 \\ -4 & 9\end{array}\right).$$ Geometrically, $D, E, F$ correspond to certain Dehn multitwists in the horizontal, vertical and slope $1/2$ directions.

The actions of $D, E, F$ on homology $H_1(M_{\ast\ast},\mathbb{R})$ are:
\small{
\begin{align*}
&D(\sigma_i)=\sigma_i&
&\textrm{for } i=1, 2, 3,& &&\\
&D(\zeta_0)=\zeta_0+2\sigma_2,&
&D(\zeta_1)=\zeta_1+3\sigma_1+2\sigma_2, &
&D(\zeta_2)=\zeta_2+3\sigma_1+6\sigma_0,&\\
&E(\sigma_0)=\sigma_0+\zeta_2, &
&E(\sigma_1)=\sigma_1+\zeta_1+\zeta_2,&
&E(\sigma_2)=\sigma_2+\zeta_1+4\zeta_0,&\\
&E(\zeta_i)=\zeta_i &
&\textrm{for } i=1, 2, 3,& &&\\
&F(\sigma_0)=\sigma_0-2\delta_1,&
&F(\sigma_1)=\sigma_1-2\delta_1-\delta_2, &
&F(\sigma_2)=\sigma_2-3\delta_2,&\\
&F(\zeta_0)=\zeta_0+2\delta_2, &
&F(\zeta_1)=\zeta_1+2\delta_1+3\delta_2, &
&F(\zeta_2)=\zeta_2+6\delta_1+\delta_2&
\end{align*}}
Note that $\overline{\sigma}_1:=\sigma_1-2\sigma_0$, $\overline{\sigma}_2:=\sigma_2-3\sigma_0$, $\overline{\zeta}_1:=\zeta_1-2\zeta_0$ and $\overline{\zeta}_2:=\zeta_2-2\zeta_0$ form a basis of $H_1^{\perp}(M_{\ast\ast},\mathbb{R})$, the symplectic orthogonal of the tautological subbundle $H_1^{st}(M_{\ast\ast},\mathbb{R})$ spanned by
$\sigma=\sigma_0+\sigma_1+\sigma_2$ and $\zeta=\zeta_0+\zeta_1+\zeta_2$. In this basis, the actions of $D, E, F$ on $H_1^{\perp}(M_{\ast\ast},\mathbb{R})$ are given by the symplectic matrices above. By Poincar\'e duality (see Remark \ref{R:monodromy-duality} above), it follows that $(D_*^{-1})^t, (E_*^{-1})^t, (F_*^{-1})^t\in \Gamma_{\perp}(M_{\ast\ast})$.
\end{proof}


\subsection{Zariski closure.} Here we complete the proof of Proposition \ref{P:st}, by computing the Zariski closures of $\Gp(M_\ast)$ and $\Gp(M_{\ast\ast})$.

\begin{lem}\label{L:z}
The Zariski closures of the groups $\langle A_{\ast}, B_{\ast}, C_{\ast}\rangle$ and $\langle D_{\ast}, E_{\ast}, F_{\ast}\rangle$ are both $Sp(4, \bR)$, where the matrices $A_{\ast},B_{\ast},C_{\ast},D_{\ast},E_{\ast},F_{\ast}$ are given in Lemmas \ref{L:ABC} and \ref{L:DEF}.
\end{lem}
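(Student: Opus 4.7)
The plan is to show that the Lie algebra of each Zariski closure equals the full symplectic Lie algebra $\mathfrak{sp}(4,\bR)$, which has dimension $10$. As a preliminary step, I would verify that each of the six matrices $A_*, B_*, C_*, D_*, E_*, F_*$ is unipotent of the simplest possible type: $(X-I)^2=0$ for each generator $X$. For $A_*, B_*, D_*, E_*$ this is immediate from their block-triangular shape, since the nonzero part of $X-I$ lies in an off-diagonal $2\times 2$ block which squares to zero. For $C_*$ one checks the rank-one factorization $C_*-I = uv^T$ with $u=(1,-1,-1,1)^T$, $v=(1,2,1,2)^T$, and $v^Tu=0$, giving $(C_*-I)^2 = u(v^Tu)v^T = 0$; an analogous rank-one factorization works for $F_*$. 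Consequently each $X-I$ lies in $\mathfrak{sp}(4,\bR)$, and the Zariski closure of the cyclic group $\langle X\rangle$ is the one-parameter subgroup $\{\exp(t(X-I)):t\in\bR\}$.

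Let $G_*$, respectively $G_{**}$, denote the Zariski closure of $\langle A_*,B_*,C_*\rangle$, respectively $\langle D_*,E_*,F_*\rangle$, and let $\mathfrak g_*, \mathfrak g_{**}$ denote their Lie algebras. By the preliminary step, $\mathfrak g_*$ contains $N_A := A_*-I$, $N_B := B_*-I$, $N_C := C_*-I$, and $\mathfrak g_{**}$ contains the analogously defined $N_D, N_E, N_F$. Since a Lie algebra is closed under bracket, it suffices to produce ten linearly independent elements of $\mathfrak g_*$ (respectively $\mathfrak g_{**}$) among the iterated brackets of these nilpotents. Concretely, I would compute a candidate list such as $\{N_A, N_B, N_C, [N_A,N_B], [N_A,N_C], [N_B,N_C], [N_A,[N_A,N_B]], [N_B,[N_A,N_B]], [N_A,[N_B,N_C]], [N_B,[N_B,N_C]]\}$, stack their flattenings into a $10\times 16$ matrix, and check that its rank is $10$. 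Since $\dim\mathfrak{sp}(4,\bR)=10$, this forces $\mathfrak g_* = \mathfrak{sp}(4,\bR)$, whence $G_*$ contains the identity component of $Sp(4,\bR)$; as $Sp(4,\bR)$ is connected and $G_*\subset Sp(4,\bR)$, we conclude $G_* = Sp(4,\bR)$. Running the identical computation on $N_D, N_E, N_F$ yields $G_{**} = Sp(4,\bR)$.

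The main obstacle is the explicit bracket computation: one must choose iterated brackets that really do span $\mathfrak{sp}(4,\bR)$. The worry is that the three nilpotents could generate only a proper Lie subalgebra. If the specific brackets listed above fall short of rank $10$, a conceptual fallback is to rule out, in turn, the maximal proper subalgebras of $\mathfrak{sp}(4,\bR)$ which could contain $N_A, N_B, N_C$: the Lagrangian stabilizer $\mathfrak{gl}_2$, the stabilizer $\mathfrak{sp}_2\oplus\mathfrak{sp}_2$ of a symplectic splitting, the principally embedded $\mathfrak{sl}_2$ arising from the third symmetric power of the standard representation, and the parabolic subalgebras. Each of these imposes a concrete and easily falsifiable structural condition — preservation of a specific Lagrangian, of a symplectic decomposition, or the existence of an $\mathfrak{sl}_2$-triple with prescribed eigenvalue pattern — which can be checked directly against the explicit matrices supplied in Lemmas \ref{L:ABC} and \ref{L:DEF}. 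In practice I expect the brute-force bracket calculation to succeed, so this classification-based fallback should be needed only as a safety net.
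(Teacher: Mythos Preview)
Your proposal is correct and follows essentially the same strategy as the paper: observe that each generator $X$ is unipotent with $(X-I)^2=0$, so $X-I=\log X$ lies in the Lie algebra of the Zariski closure, then manufacture ten Lie algebra elements and verify their linear independence to conclude via $\dim\mathfrak{sp}(4,\bR)=10$. The only difference is in how the additional elements are produced: you take iterated Lie brackets of $N_A,N_B,N_C$, whereas the paper conjugates $\log A_*$ by nine explicit group elements ($B_*, B_*^2, A_*B_*, A_*^2 B_*, B_*A_*B_*, C_*, C_*^2, A_*C_*, B_*C_*$, and analogously for $D_*,E_*,F_*$), displaying the resulting matrices and confirming linear independence directly---so the paper's computation is fully carried out rather than left as a proposed check with a fallback.
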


\begin{proof}
Denote by $G=\overline{\langle A_{\ast}, B_{\ast}, C_{\ast}\rangle}^{Zariski}\subset Sp(4,\mathbb{R})$ the Zariski closure of the subgroup generated by $A_{\ast},B_{\ast}, C_{\ast}$. We claim that
$G=Sp(4,\mathbb{R})$. Indeed, denote by $\mathfrak{g}$ the Lie algebra of $G$ and note that
$$\log A_{\ast}=\left(\begin{array}{cccc} 0 & 0 & 3 & 3 \\ 0 & 0 & -2 & -4 \\ 0 & 0 & 0 & 0 \\ 0 & 0 & 0 & 0 \end{array}\right)\in\mathfrak{g}.$$
Consider the conjugates of $\log A_{\ast}$ by $$B_{\ast}, B_{\ast}^2, A_{\ast}B_{\ast}, A_{\ast}^2B_{\ast}, B_{\ast}A_{\ast}B_{\ast}, C_{\ast}, C_{\ast}^2, A_{\ast}C_{\ast}, B_{\ast}C_{\ast}.$$ These nine matrices are, respectively,
\setlength\arraycolsep{2pt}
\begin{eqnarray*}
&& \tiny{\left(\begin{array}{cccc} -3 & 3 & 3 & 3 \\ -2 & -10 & -2 & -4 \\ -15 & -21 & 3 & -3 \\ -14 & 34 & 2 & 10 \end{array}\right), \,
 \left(\begin{array}{cccc} -6 & 6 & 3 & 3 \\ -4 & -20 & -2 & -4 \\ -60 & -84 & 6 & -6 \\ 56 & 136 & 4 & 20 \end{array}\right),\,
\left(\begin{array}{cccc} -6 & 42 & 120 & 210 \\ -28 & -104 & -140 & -370 \\ -15 & -21 & 6 & -42 \\ 14 & 34 & 28 & 104 \end{array}\right),}\\
&& \tiny{\left(\begin{array}{cccc} -9 & 81 & 411 & 747 \\ -54 & -198 & -498 & -1332 \\ -15 & -21 & 9 & -81 \\ 14 & 34 & 54 & 198 \end{array}\right),\,
 \left(\begin{array}{cccc} 54 & 522 & 120 & 210 \\ -348 & -1164 & -140 & -370 \\ -999 & -2133 & -54 & -522 \\ 1422 & 3978 & 348 & 1164 \end{array}\right),\,
\left(\begin{array}{cccc} 4 & 8 & 6 & 6 \\ -2 & -4 & -3 & -3 \\ -4 & -8 & -3 & -3 \\ 4 & 8 & 3 & 3 \end{array}\right),}\\
&&\tiny{\left(\begin{array}{cccc} 16 & 32 & 17 & 25 \\ -12 & -24 & -12 & -18 \\ -16 & -32 & -14 & -22 \\ 16 & 32 & 14 & 22 \end{array}\right), \,
\left(\begin{array}{cccc} 4 & 8 & 10 & 26 \\ -10 & -20 & -19 & -59 \\ -4 & -8 & -7 & -23 \\ 4 & 8 & 7 & 23 \end{array}\right),\,
\left(\begin{array}{cccc} -2 & 14 & 6 & 6 \\ 1 & -7 & -3 & -3 \\ -4 & 10 & 6 & 6 \\ 1 & 11 & 3 & 3 \end{array}\right).}
\end{eqnarray*}
\setlength\arraycolsep{6pt}
All of these matrices must be in $\mathfrak{g}$. Furthermore, direct computation shows $\log A_{\ast}$ and these nine matrices are linearly independent. Since $Sp(4, \bR)$ has dimension 10, we conclude that $G=Sp(4,\bR)$.

Next denote by $H=\overline{\langle D_{\ast}, E_{\ast}, F_{\ast}\rangle}^{Zariski}\subset Sp(4,\mathbb{R})$ the Zariski closure of the subgroup generated by $D_{\ast},E_{\ast}, F_{\ast}$. We claim that
$H=Sp(4,\mathbb{R})$. Indeed, denote by $\mathfrak{h}$ the Lie algebra of $H$ and note that
$$\log D_{\ast}=\left(\begin{array}{cccc} 0 & 0 & 3 & 3 \\ 0 & 0 & -2 & -4 \\ 0 & 0 & 0 & 0 \\ 0 & 0 & 0 & 0 \end{array}\right)\in\mathfrak{h}.$$
Consider the conjugates of $\log D_{\ast}$ by $$E_{\ast}, E_{\ast}^2, D_{\ast}E_{\ast}, D_{\ast}E_{\ast}^2, E_{\ast}D_{\ast}E_{\ast}, F_{\ast},  F_{\ast}^2, D_{\ast}F_{\ast}, E_{\ast}F_{\ast}.$$ These nine matrices are, respectively,
\setlength\arraycolsep{2pt}
\begin{eqnarray*}
&& \tiny{\left(\begin{array}{cccc} 0 & 6 & 3 & 3 \\ -2 & -10 & -2 & -4 \\ -2 & -4 & 1 & -1 \\ 6 & 24 & 3 & 9 \end{array}\right), \,
\left(\begin{array}{cccc} 0 & 12 & 3 & 3 \\ -4 & -20 & -2 & -4 \\ -8 & -16 & 2 & -2 \\ 24 & 96 & 6 & 18 \end{array}\right),\,
\left(\begin{array}{cccc} 12 & 66 & 111 & 255 \\ -22 & -98 & -146 & -364 \\ -2 & -4 & -1 & -11 \\ 6 & 24 & 33 & 87 \end{array}\right),}\\
&& \tiny{\left(\begin{array}{cccc} 48 & 252 & 387 & 915 \\ -84 & -372 & -522 & -1308 \\ -8 & -16 & -6 & -42 \\ 24 & 96 & 126 & 330 \end{array}\right),\,
\left(\begin{array}{cccc} 156 & 720 & 111 & 255 \\ -240 & -1044 & -146 & -364 \\ -96 & -360 & -36 & -120 \\ 624 & 2664 & 360 & 924 \end{array}\right),\,
\left(\begin{array}{cccc} 12 & 36 & 12 & 30 \\ -24 & -72 & -20 & -58 \\ -15 & -45 & -12 & -36 \\ 30 & 90 & 24 & 72 \end{array}\right),}\\
&&\tiny{\left(\begin{array}{cccc} 54 & 162 & 51 & 147 \\ -108 & -324 & -98 & -292 \\ -60 & -180 & -54 & -162 \\ 120 & 360 & 108 & 324 \end{array}\right), \,
\left(\begin{array}{cccc} 57 & 171 & 219 & 651 \\ -114 & -342 & -434 & -1300 \\ -15 & -45 & -57 & -171 \\ 30 & 90 & 114 & 342 \end{array}\right),\,
\left(\begin{array}{cccc} 30 & 114 & 12 & 30 \\ -62 & -226 & -20 & -58 \\ -71 & -253 & -20 & -64 \\ 234 & 846 & 72 & 216 \end{array}\right).}
\end{eqnarray*}
\setlength\arraycolsep{6pt}
All of these matrices must be in $\mathfrak{h}$. Furthermore, direct computation shows $\log D_{\ast}$ and these nine matrices are linearly independent. Since $Sp(4, \bR)$ has dimension 10, we conclude that $H=Sp(4,\bR)$.
\end{proof}

\begin{proof}[\eb{Proof of Proposition \ref{P:st}.}]
We consider the square-tiled surfaces $M_{\ast}\in \cH(4)^{\textrm{odd}}$ and $M_{\ast\ast}\in \cH(4)^{\textrm{hyp}}$.

Lemmas \ref{L:ABC} and \ref{L:DEF} show that $\Gp(M_\ast)\supset\langle (A_{\ast}^{-1})^t,(B_{\ast}^{-1})^t,(C_{\ast}^{-1})^t\rangle$ and $\Gp(M_{\ast\ast})\supset\langle (D_{\ast}^{-1})^t,(E_{\ast}^{-1})^t,(F_{\ast}^{-1})^t\rangle$, and Lemma \ref{L:z} proves that both $\langle A_{\ast},B_{\ast},C_{\ast}\rangle$ and $\langle D_{\ast},E_{\ast},F_{\ast}\rangle$ have Zariski closure equal to $Sp(4,\bR)$. Since both $\oGp(M_\ast)$ and $\oGp(M_{\ast\ast})$ are subgroups of $Sp(4,\bR)$, it follows that they are both equal to $Sp(4,\bR)$.
\end{proof}

\begin{rem}
The computations in this section were first performed by the first author by hand,  using Mathematica only for matrix arithmetic and to verify linear independence of sets of matrices. The computations were then redone independently by the second author  by computer, using a short Maple program which computes the action of Dehn multitwists on homology.
\end{rem}


\section{Inductive step}\label{s.induct}

In this section, we prove Theorem \ref{T:induct}. We begin by proving the first statement, since the proof is technically easier in this case.

\begin{prop}\label{P:induct1}
Let $\cH$ be a connected component of a stratum $\cH(k_1, \ldots, k_s)$ of genus $g$ translation surfaces, where $s>1$. Suppose that every translation surface in $\cH$ has $k$ \good planes. Then there is a connected component $\cH'$ of $\cH(2g-2)$ in which every translation surface has $k$ orthogonal \good planes.
\end{prop}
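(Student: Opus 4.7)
The plan is to combine the opening-a-zero degeneration with Proposition \ref{P:limit_is_good} and continuity of the Hodge inner product, so that in the limit we still find $k$ orthogonal \good planes.

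First I would construct the component $\cH'$. Fix any $M \in \cH$. Since $s > 1$, I can move two zeros of $M$ toward each other within $\cH$, producing a sequence $M_n \in \cH$ converging in the moduli space of abelian differentials to a surface in a stratum with one fewer zero. Iterating this procedure, I obtain a degeneration $M_n \to M^{\ast}$ with $M_n \in \cH$ and $M^{\ast} \in \cH(2g-2)$. I then take $\cH'$ to be the connected component of $\cH(2g-2)$ containing $M^{\ast}$.

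Next I claim every $M' \in \cH'$ has $k$ orthogonal \good planes. The key tool is the inverse construction of opening a zero, which (as formalized in Section \ref{S:conv}) provides, for each $M'' \in \cH(2g-2)$, a neighborhood $U$ of $M''$ in $\cH(2g-2)$ and a continuous family $\psi(M''',t) \in \cH(k_1,\ldots,k_s)$, defined for $M''' \in U$ and $t$ in a small punctured disk, with $\psi(M''',t) \to M'''$ in the moduli space of abelian differentials as $t \to 0$. Since the connected component of $\cH(k_1,\ldots,k_s)$ containing $\psi(M''',t)$ (for small $t$) depends locally constantly on $M'''$, the subset of $\cH'$ consisting of those $M'$ for which some such opening lands in the fixed component $\cH$ is both open and closed in $\cH'$; it contains $M^{\ast}$, hence equals $\cH'$ by connectedness.

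Given any $M' \in \cH'$, I choose a sequence $M'_n = \psi(M',t_n) \in \cH$ with $t_n \to 0$, so that $M'_n \to M'$ in the moduli space of abelian differentials. By hypothesis each $M'_n$ carries $k$ pairwise Hodge-orthogonal \good planes $P_n^{(1)},\ldots,P_n^{(k)}$. Compactness of the Grassmannian of $2$-planes in $H^1(M',\bR)$ lets me pass to a subsequence in which each $P_n^{(i)}$ converges to a $2$-plane $P^{(i)}$ at $M'$; Proposition \ref{P:limit_is_good} ensures each $P^{(i)}$ is \good, and continuity of the Hodge inner product (stemming from continuous variation of the complex structure along the degeneration) ensures the $P^{(i)}$ remain pairwise Hodge-orthogonal. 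The main obstacle I anticipate is rigorously establishing the opening-a-zero construction with convergence in the moduli space of abelian differentials (as required by Proposition \ref{P:limit_is_good}) together with local constancy of the landing component; these are continuity statements about the Kontsevich-Zorich-style surgeries that I would defer to Section \ref{S:conv}. The remaining ingredients — Grassmannian limits being genuine $2$-planes and continuity of Hodge orthogonality — are routine.
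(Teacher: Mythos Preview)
Your argument is correct and follows the same overall strategy as the paper: find a component $\cH'$ of $\cH(2g-2)$ in the closure of $\cH$, approximate an arbitrary $M'\in\cH'$ by surfaces in $\cH$, and then pass orthogonal \good planes to the limit via Proposition~\ref{P:limit_is_good} and continuity of the Hodge inner product.

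The only substantive difference is in how the adjacency is obtained. The paper invokes it as a black box (Proposition~\ref{P:min}, due to Kontsevich--Zorich): there exists a component $\cH'$ of $\cH(2g-2)$ contained in the boundary of $\cH$. You instead sketch a proof of this fact, first colliding zeros to locate $\cH'$, then using the inverse splitting-a-zero surgery plus a local-constancy/connectedness argument to show \emph{every} point of $\cH'$ lies in $\overline{\cH}$. This is essentially how Kontsevich--Zorich prove such adjacency statements, so your route is a bit longer but not genuinely different. One small correction: you cite Section~\ref{S:conv} as formalizing the opening-a-zero construction, but that section in the paper treats the bubbling-a-handle degeneration (genus drops); the splitting-a-zero surgery and its continuity are instead taken from \cite{EMZ, KZ} (see the discussion preceding Proposition~\ref{P:minadj}). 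Once you redirect that citation, your deferred ``main obstacle'' is exactly what Proposition~\ref{P:min} packages, and the rest of your argument matches the paper's.
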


\begin{prop}[Kontsevich-Zorich]\label{P:min}
Let $\cH$ be a connected component of a stratum  of genus $g$ translation surfaces with more than one zero. Then there is a connected component $\cH'$ of $\cH(2g-2)$ which is contained in the boundary of $\cH$.
\end{prop}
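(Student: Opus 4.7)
The plan is to use the ``collision of zeros'' degeneration together with its local inverse, the ``opening up a zero'' surgery, both introduced and systematically exploited by Kontsevich--Zorich in their classification of components of strata. By iterating the collision surgery one moves from $\cH(k_1,\ldots,k_s)$ to the minimal stratum $\cH(2g-2)$; the inverse surgery then shows the image is open, which forces a full connected component of $\cH(2g-2)$ to lie in the boundary of $\cH$.

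More concretely, I would first describe the collision surgery. Given $M=(X,\omega)\in\cH$ with $s>1$ zeros, pick any two zeros $p_i,p_j$ of orders $k_i,k_j$ and a short saddle connection joining them. In period coordinates one may continuously shrink the corresponding relative period $\int_{p_i}^{p_j}\omega$ to zero while keeping all absolute periods fixed. The limiting surface $M^\flat$ lies in the stratum $\cH(k_1,\ldots,k_i+k_j,\ldots,k_s)$ obtained by merging the two zeros. Iterating the surgery produces a surface in $\cH(2g-2)$ which, by construction, is a limit of surfaces in $\cH$ and therefore lies in $\overline{\cH}\cap \cH(2g-2)$. In particular this intersection is nonempty.

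Next I would establish that $\overline{\cH}\cap \cH(2g-2)$ is open in $\cH(2g-2)$. The key input is the Kontsevich--Zorich ``opening up a zero'' construction, which is a local inverse to collision: given any surface in $\cH(2g-2)$ and any admissible combinatorial choice for how to split the single zero of order $2g-2$ into zeros of orders $k_1,\ldots,k_s$, one obtains a family of surfaces in $\cH(k_1,\ldots,k_s)$ parameterized by a small disk in the space of the new relative periods, such that collapsing those periods recovers the original surface. Taken together, collision and opening-up identify a neighborhood of $M^\flat$ in $\cH(2g-2)$ with the base of a local fibration whose total space is an open set in $\cH(k_1,\ldots,k_s)$. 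This identifies $\overline{\cH}\cap \cH(2g-2)$ locally with the image of such a fibration, which is open.

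Finally, let $\cH'$ be the connected component of $\cH(2g-2)$ containing any point of $\overline{\cH}\cap \cH(2g-2)$. The set $\cH'\cap\overline{\cH}$ is nonempty, closed in $\cH'$ (since $\overline{\cH}$ is closed), and open in $\cH'$ by the previous paragraph. By connectedness of $\cH'$, we get $\cH'\subset\overline{\cH}$, and since $\cH'$ lies in a different stratum from $\cH$ it is disjoint from $\cH$, so $\cH'\subset\overline{\cH}\setminus\cH$, which is the boundary of $\cH$. I expect the main obstacle to be the careful verification that the opening-up surgery does land in the prescribed component $\cH$ (and not merely in the same stratum), which requires tracking the Kontsevich--Zorich spin/hyperelliptic invariants through the surgery; however, we only need the statement that \emph{some} component of $\cH(2g-2)$ appears, so it suffices to pick $M\in\cH$ and run collision on $M$, using the component of $\cH(2g-2)$ that happens to contain the resulting $M^\flat$, after which the open-closed argument above applies without any further combinatorial bookkeeping.
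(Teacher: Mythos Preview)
Your sketch is essentially the Kontsevich--Zorich argument that the paper cites (the paper does not give its own proof; it simply attributes the proposition to \cite{KZ}, specifically to the proof of \cite[Prop.~4]{KZ}). So at the level of approach there is nothing to compare.

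One small point worth tightening: your open--closed argument for $\cH'\cap\overline{\cH}$ requires openness at \emph{every} point $N$ of this set, not just at the specific $M^\flat$ you produced. As written, you only verify openness near $M^\flat$ via the local fibration. The missing step is that the opening-up surgery (with the fixed combinatorial data inverse to your collision) depends continuously on the base surface in $\cH(2g-2)$; since $\cH$ is clopen in its stratum and opening up $M^\flat$ lands in $\cH$, continuity along a path in $\cH'$ from $M^\flat$ to $N$ shows that opening up $N$ also lands in $\cH$. But once you have this, you actually get $\cH'\subset\overline{\cH}$ directly, and the open--closed argument becomes superfluous. This continuity is exactly the content of \cite[Lemma~10, Cor.~4]{KZ}, which the paper invokes elsewhere for the analogous ``bubbling a handle'' situation.
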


Proposition \ref{P:min} is proven in \cite{KZ} (in particular, this statement is included in the proof of \cite[Prop. 4]{KZ}).

\begin{proof}[\eb{Proof of Proposition \ref{P:induct1}.}]
Let $\cH'$ be given by Proposition \ref{P:min}, and let $M\in \cH'$ be arbitrary. By Proposition \ref{P:min}, we see there is a sequence $M_n\in \cH$ with $M_n\to M$. We have assumed that each $M_n$ has $k$ orthogonal \good planes. By Lemma \ref{P:limit_is_good}, we see that $M$ has $k$ \good planes also.
\end{proof}

We now proceed to the second and final statement of Theorem \ref{T:induct}, whose proof is similar but more involved.

\begin{prop}\label{P:induct2}
Let $\cH$ be a connected component of the stratum $\cH(2g-2)$. Suppose that every translation surface in $\cH$ has $k$ \good planes. Then there is a connected component $\cH'$ of $\cH(2g-4)$ in which every translation surface has at least $k-1$ \good planes.
\end{prop}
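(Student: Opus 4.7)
The plan is to run a variant of the proof of Proposition~\ref{P:induct1}, replacing ``colliding zeros'' with the Kontsevich--Zorich ``bubbling off a handle'' construction, which degenerates a sequence $M_n\in\cH$ (genus $g$) to a limit $M$ of genus $g-1$ lying in some connected component $\cH'$ of $\cH(2g-4)$. The first step is to invoke \cite{KZ} to locate such a connected component $\cH'$ in the handle-bubbling boundary of $\cH$, so that every $M\in\cH'$ arises as a limit of some sequence $M_n\in\cH$ obtained by bubbling off a handle on $M$.

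The new feature, absent in Proposition~\ref{P:induct1}, is the dimensional jump $\dim H^1(M_n,\bR)=2g \to \dim H^1(M,\bR)=2g-2$. The two vanishing dimensions form a symplectic $2$-plane $V_n\subset H^1(M_n,\bR)$ generated by the vanishing cycle (the pinching curve) and its symplectic dual, and its symplectic orthogonal $V_n^{\dagger}$ is naturally identified with $H^1(M,\bR)$ in the limit. Making this identification precise and tracking the Hodge decomposition across the degeneration is the role of Section~\ref{S:conv}, which I would expect to furnish a handle-bubbling analogue of Proposition~\ref{P:limit_is_good}: any limit, inside $V_n^{\dagger}$, of \good planes is a \good plane on $M$.

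Given this technical input, the combinatorial heart of the argument is the observation that at most one of $k$ pairwise orthogonal \good planes on $M_n$ can meet $V_n$ nontrivially. Indeed, if two indices $i\neq j$ admitted nonzero vectors $v_i\in P^{(i)}_n\cap V_n$ and $v_j\in P^{(j)}_n\cap V_n$, the symplectic orthogonality $P^{(i)}_n\perp P^{(j)}_n$ would force $\langle v_i,v_j\rangle=0$, contradicting the nondegeneracy of the symplectic form restricted to $V_n$ (which would force $v_i$ and $v_j$ to be proportional, hence both zero, since the $P^{(i)}_n$ are in direct sum). Discarding at most one offending index, the remaining $k-1$ planes lie in $V_n^{\dagger}$ and, after passing to a subsequence, converge to $2$-planes in $H^1(M,\bR)$; the expected analogue of Proposition~\ref{P:limit_is_good} then shows each limit is \good, while continuity of the Hodge inner product preserves orthogonality, producing the required $k-1$ \good planes on $M$.

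The main obstacle is the technical convergence result in Section~\ref{S:conv}. Unlike the colliding-zeros degeneration, the topological type of the surface changes at the limit, so one cannot simply transport cohomology classes along a fixed family of diffeomorphisms as in the proof of Proposition~\ref{P:limit_is_good}. One must instead work with a plumbing or Deligne--Mumford description of the degeneration, correctly identify the symplectic vanishing plane $V_n$ and its symplectic orthogonal, and verify that the condition of intersecting $H^{1,0}$ nontrivially passes to the quotient $V_n^{\dagger}\cong H^1(M,\bR)$ compatibly along the entire $SL(2,\bR)$--orbit.
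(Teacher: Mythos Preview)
Your overall plan---invoke Kontsevich--Zorich to find $\cH'$, degenerate by shrinking a bubbled handle, and argue that at most one of the $k$ orthogonal \good planes is lost---is exactly the paper's strategy. But the ``combinatorial heart'' you isolate contains a genuine gap, and your picture of the degeneration is slightly off.

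First, the picture. The curve that gets pinched when the handle shrinks is \emph{separating}: in the Deligne--Mumford limit $X_\infty$ it becomes the node joining $Y$ to a torus $T$. A separating curve is homologically trivial, so it is not a ``vanishing cycle'' in the Lefschetz sense, and there is no ``symplectic dual'' to it. The $2$--plane you want is the cohomology of the handle (equivalently, $H^1(T,\bR)$ sitting inside $H^1(X_\infty,\bR)$), not the span of a vanishing cycle and a dual.

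Second, and more seriously: from $P_n^{(i)}\cap V_n=\{0\}$ you conclude that $P_n^{(i)}\subset V_n^{\dagger}$. This is false in general; a $2$--plane can miss a $2$--plane without lying in its symplectic orthogonal. So you cannot simply restrict to $V_n^{\dagger}$ and take limits there. One might try to \emph{project} onto $V_n^{\dagger}$ instead, but for finite $n$ this splitting is purely topological and has no reason to respect the Hodge decomposition, so you would not know the projections are \good.

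The paper sidesteps both issues by exploiting the fact that the pinched curve is separating: the bundle $H^1_\bR$ (still of rank $2g$) extends continuously to $X_\infty$, so one first takes limits $P^{(i)}\subset H^1(X_\infty,\bR)$ of the \good planes, and only then applies the restriction $\iota^*:H^1(X_\infty,\bR)\to H^1(Y,\bR)$. At the nodal limit the Hodge structure splits as $H^{1,0}(X_\infty)=H^{1,0}(Y)\oplus H^{1,0}(T)$, so $\iota^*$ is Hodge--compatible; this is what makes Proposition~\ref{P:limit_is_good2} go through and guarantees each nonzero $\iota^*(P^{(i)})$ is a genuine \good plane. The ``at most one lost'' step is then just that the $P^{(i)}$ are pairwise orthogonal in $H^1(X_\infty,\bR)$ and $\ker\iota^*=H^1(T,\bR)$ is $2$--dimensional, so at most one $P^{(i)}$ can be contained in it.
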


Propositions \ref{P:induct1} and \ref{P:induct2} together prove Theorem \ref{T:induct}. To prove Proposition \ref{P:induct2}, we will need to study certain degenerations of translation surfaces to lower genus.

\subsection{Bubbling a handle.} In their classification of connected components of strata, Kontsevich-Zorich introduced a local surgery of Abelian differentials that increases genus by 1. They call their surgery, which involves splitting a zero and then gluing in a torus, ``bubbling a handle."

\bold{Splitting a zero.} Eskin-Masur-Zorich \cite{EMZ} introduced a local surgery of a translation surface known as splitting (or opening up) a zero. A cut and paste operation is performed in a sufficiently small neighborhood of a zero of order $m$ on a translation surface, producing a new translation surface which is flat isometric  to the old one outside of this neighborhood, but inside the neighborhood has a pair of zeros of orders $m'$ and $m''$ with sum $m'+m''=m$, joined by a short saddle connection with holonomy $v\in \bR^2$. We will say that the direction of $v$ in $\bR^2$ is the direction of the splitting. (The degenerate case $m''=0$ is usually allowed but we will not need it.) For more details, we highly recommend consulting \cite[Section 4.2]{KZ} or \cite[Section 8.1]{EMZ}.

\begin{figure}[h]
\includegraphics[scale=0.3]{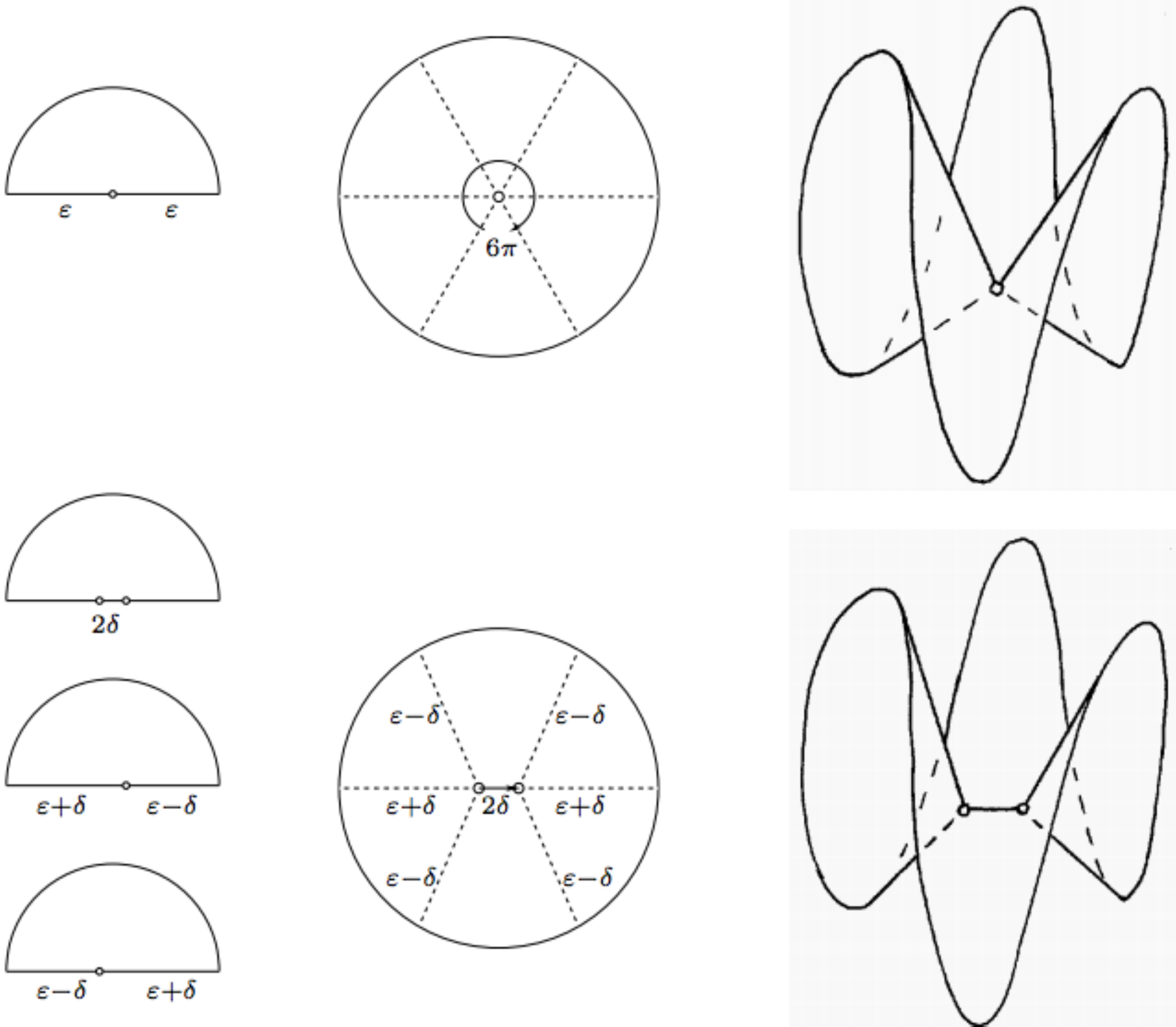}
\caption{Splitting a zero is a local surgery which replaces a single zero with a pair of zeros joined by a short saddle connection. In this picture $m=2, m'=m''=1$, and the holonomy $v$ of the newly created short saddle connection is horizontal and has length $2\delta$. Figure from \cite{EMZ, KZ}.}
\label{F:split}
\end{figure}

\bold{Gluing in the torus.} The process of bubbling a handle begins by splitting a zero. The new short saddle connection is then cut, producing a slit. A cylinder is glued into this slit, one end to each half of the slit. This identifies the two endpoints of the slit.
\begin{figure}[h]
\includegraphics[scale=0.3]{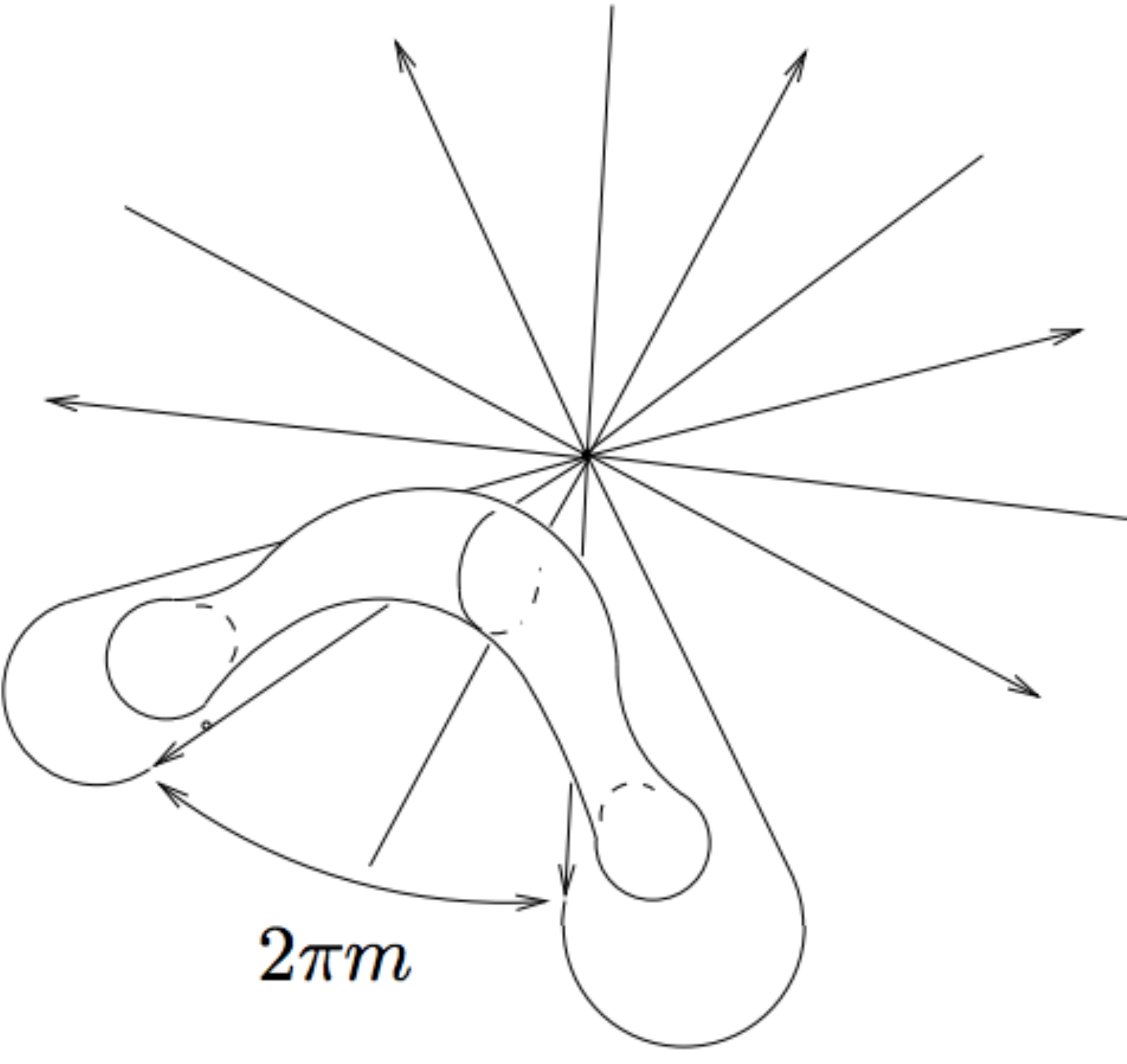}
\caption{After opening up a zero, a cylinder or handle is glued in. The process of splitting the zero and then gluing in a cylinder is called bubbling a handle. Figure from \cite{KZ}.}
\label{F:handle}
\end{figure}
The torus is called a handle, which has been ``bubbled off" from the original translation surface. The result is a translation surface with the same number of zeros, but the order of one of the zeros (the one which was split) goes up by one, and the genus increases by one. Even after the zero has been split open, there are a number of ways of gluing in a handle, parameterized by the width of the cylinder, and the twist applied when gluing. We will usually glue in a \emph{square handle}, that is, we will take a square with a side parallel to the slit, glue the two sides parallel to the slit to the two sides of the slit, and glue the two remaining sides to each other.  See figure \ref{F:square}.

\begin{figure}[h]
\includegraphics[scale=0.3]{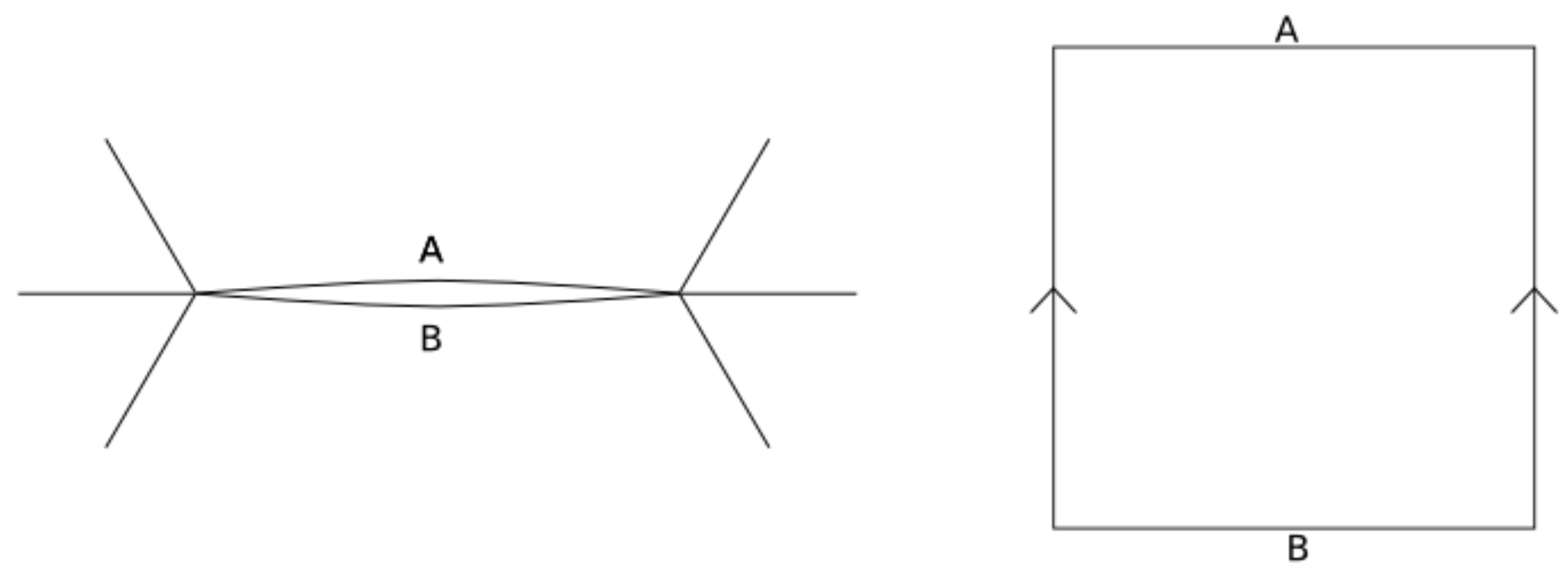}
\caption{Another picture of bubbling a square torus (in a horizontal direction). After the two zeros have been separated, the short saddle connection joining them is cut and a square cylinder is glued in. As a result of gluing in the cylinder the two endpoints of the slit get identified. (The order of zero in this picture is less than in the previous picture.)}
\label{F:square}
\end{figure}

\bold{Adjacency of strata.} The following is a key step in the Kontsevich-Zorich classification of strata.

\begin{prop}\label{P:minadj}
For each connected component $\cH$ of a minimal stratum, there is a connected component $\cH'$ of the minimal stratum in genus one less, so that from every translation surface in $\cH'$ it is possible to bubble a square handle so that the resulting translation surface is in $\cH$.
\end{prop}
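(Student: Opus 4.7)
The plan is to deduce the proposition from the Kontsevich-Zorich classification of connected components of strata \cite{KZ} by a case-by-case analysis of how the two invariants distinguishing components --- hyperellipticity and parity of the spin structure --- transform under bubbling a square handle. Recall that for $g\geq 4$ the minimal stratum $\cH(2g-2)$ has three components $\cH(2g-2)^{hyp}$, $\cH(2g-2)^{even}$, $\cH(2g-2)^{odd}$; for $g=3$ there are only $\cH(4)^{hyp}$ and $\cH(4)^{odd}$; and for $g=2$ there is only $\cH(2)$, which coincides with $\cH(2)^{hyp}$. The overall strategy is, for each component $\cH\subset\cH(2g-2)$, to single out an appropriate component $\cH'\subset\cH(2g-4)$ and then verify that every $M'\in\cH'$ admits a square-handle bubbling landing in $\cH$.

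For the hyperelliptic target $\cH=\cH(2g-2)^{hyp}$ I would take $\cH'=\cH(2g-4)^{hyp}$. Given $M'\in\cH'$ with hyperelliptic involution $\tau$, the unique zero is a Weierstrass point of $\tau$, and after an arbitrarily small deformation within $\cH'$ (which does not change the component of the output by connectedness) one can arrange for a short $\tau$-invariant saddle connection through the zero. Split the zero along this saddle connection and glue in a square torus $\tau$-equivariantly, which is possible because a square torus admits an involution extending any prescribed identification on a slit. The resulting surface lies in $\cH(2g-2)$ and carries an extension of $\tau$ to a hyperelliptic involution with $2g+2$ fixed points, placing it in $\cH(2g-2)^{hyp}$.

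For the non-hyperelliptic targets the key input is the computation of the Arf invariant under square-handle bubbling carried out in \cite{KZ}. They show that the change in spin parity is a well-defined element of $\bZ/2\bZ$ depending only on the combinatorics of the splitting direction and the handle's twist, and that both possible values are realized by appropriate choices of these parameters. Consequently, starting from a surface in a non-hyperelliptic component of $\cH(2g-4)$ of the correct parity (or from $\cH(2g-4)^{hyp}$ in the low-genus cases where the required non-hyperelliptic component does not exist, with the twist chosen to achieve the needed change in parity), a suitable square-handle bubbling produces a surface in any prescribed non-hyperelliptic component of $\cH(2g-2)$. The main obstacle is precisely this change-of-parity computation: one must extend a chosen symplectic basis of $H_1(M',\bZ)$ across the new handle by two new cycles and evaluate the $\omega$-indices of these cycles modulo $2$ in terms of the slit direction and the twist of the handle. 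This is exactly what is done in \cite{KZ}, so the proof reduces to invoking their analysis and verifying that the unit-square handles used in our formulation of bubbling fit into their framework.
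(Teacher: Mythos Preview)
Your approach differs substantially from the paper's. You attempt a case-by-case verification tracking how hyperellipticity and spin parity transform under bubbling. The paper instead gives a two-line argument: Lemma~14 of \cite{KZ} furnishes, for each component $\cH$, a component $\cH'$ and \emph{one} surface in $\cH'$ from which some bubbling lands in $\cH$; then continuity of the bubbling operation (Lemma~10 of \cite{KZ}), together with connectedness of $\cH'$, extends this to \emph{every} surface in $\cH'$ and simultaneously shows the handle can be deformed to a square. The continuity argument bypasses the case analysis and the equivariant constructions you sketch, and it is precisely the mechanism by which one passes from ``some surface in $\cH'$'' to ``every surface in $\cH'$'' --- a step you never quite isolate.

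Your hyperelliptic case has a genuine gap. Splitting a zero does not proceed ``along an existing saddle connection''; it \emph{creates} a new one, so the preliminary deformation to arrange a $\tau$-invariant saddle connection is not the right setup. More seriously, the claim that a square torus can be glued in $\tau$-equivariantly so that the resulting involution has exactly $2g+2$ fixed points needs justification: you must check that the splitting itself can be made $\tau$-symmetric (the two new zeros are exchanged or fixed appropriately) and count the fixed points contributed by the handle. This is doable and is part of what \cite{KZ} establish, but it does not follow from your sketch. For the non-hyperelliptic targets you correctly identify the parity computation from \cite{KZ} as the key input, but you also need to verify that the output is not accidentally hyperelliptic, which is a separate step in \cite{KZ}. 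In short, your route can be made to work, but it amounts to reproving a chunk of \cite{KZ}, whereas the paper simply invokes the two lemmas there.
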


\begin{proof}
Lemma 14 in \cite{KZ} asserts that given such an $\cH$, there is a connected component $\cH'$ of the minimal stratum in genus one less, and a translation surface in $\cH'$ from which it is possible to bubble a handle and obtain a translation surface in $\cH$. The continuity of the bubbling a handle operation (Lemma 10 in \cite{KZ}) then gives that the translation surface in $\cH'$ can be changed arbitrarily along paths, and the cylinder which is bubbled off can be made square.
\end{proof}

\subsection{The limit of a \good plane as the handle shrinks.} Before we discuss convergence of \good planes, we need to discuss convergence of  translation surfaces.

\begin{prop}\label{P:DM_limit}
Let $(X_1, \omega_1)$ be the result of bubbling a square handle from $(Y,\omega)$, and let $(X_n, \omega_n)$ be a sequence of translation surfaces obtained from the same bubbling construction but letting the size of the square handle go to zero. Then $X_n$ converges in the Deligne-Mumford compactification to a stable curve $X_{\infty}$ given by $Y$ joined at a node to a torus, and $\omega_n$ converges to the Abelian differential which is equal to $\omega$ on $Y$ and is zero on the torus. The node on $Y$ is at the zero of $\omega$.

Fix $h\in SL(2,\bR)$, and set $(X_n^h, \omega_n^h)=h(X_n,\omega_n)$, and let $(Y^h, \omega^h)=h(Y,\omega)$. Then $X_n^h$ converges in the Deligne-Mumford compactification to  $Y^h$ joined at a node to a torus, and $\omega_n^h$ converges to the Abelian differential which is equal to $\omega^h$ on $Y^h$ and is zero on the torus.
\end{prop}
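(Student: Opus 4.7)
The plan is to analyze the bubbling surgery directly in the flat picture and match it with a Deligne--Mumford degeneration in which the waist of the bubbled handle gets pinched. Recall that $(X_n, \omega_n)$ is obtained from $(Y, \omega)$ by splitting the zero $p$ along a short saddle connection of length $\delta_n$ and gluing in a square handle of side $\delta_n$. First I would observe that outside a disk $B_n \subset X_n$ of flat diameter $O(\delta_n)$, the canonical identification $X_n \setminus B_n \to Y \setminus (\text{small disk around } p)$ is a flat isometry; so as $\delta_n \to 0$, $X_n \setminus B_n$ converges to $Y \setminus \{p\}$ as a punctured Riemann surface, and the restriction of $\omega_n$ converges to $\omega$.

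Next, inside $B_n$ there is a simple closed curve $\gamma_n$ (the waist of the handle) of flat length $\delta_n$ whose complement in $X_n$ has a genus-one component $T_n$. The crucial observation is that $T_n$, viewed as a flat torus with a small slit, is obtained from a fixed square torus with a slit of definite size by uniform rescaling by $\delta_n$. Rescaling does not change the complex structure, so $T_n$ is biholomorphic to a fixed square torus with a vanishingly small slit, which in the limit becomes a smooth torus with a distinguished point where the slit collapsed. Combined with the previous step this shows that $X_n$ converges in $\overline{\cM}_g$ to the stable nodal curve obtained by joining $Y$ to this torus at a node at $p$. For the differential: on the $Y$-piece we already have $\omega_n \to \omega$; on the rescaled handle $T_n$ the form becomes $\delta_n\, dz$, which vanishes in the limit.

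For the second ($h$-twisted) statement, the point is that the $SL(2,\bR)$-action commutes with the bubbling construction and extends continuously to a neighborhood of the Deligne--Mumford boundary. Applying $h$ replaces the square handle by the parallelogram $h \cdot [0,\delta_n]^2$, which is still a fixed-shape parallelogram uniformly scaled by $\delta_n$. The same analysis therefore gives the analogous limit $Y^h$ joined to the parallelogram torus of shape $h \cdot [0,1]^2$, with the Abelian differential equal to $\omega^h$ on $Y^h$ and zero on the torus. The main obstacle in making all of this rigorous is translating the metric collapse into convergence of the plumbing parameter on $\overline{\cM}_g$ associated with the pinching curve $\gamma_n$, so that the complex structure on each limit component is correctly identified; this kind of estimate is standard in the degeneration theory of Riemann surfaces but will need to be pinned down carefully (for instance via an explicit plumbing coordinate in a neighborhood of $X_\infty$).
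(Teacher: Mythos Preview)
Your overall strategy---exploit that the surgery is local, use the flat isometry away from the zero, and rescale near the handle---is the same as the paper's. But the execution on the torus side has a genuine gap.

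First, the curve you call ``the waist of the handle'' (the core curve of the glued-in cylinder, of flat length $\delta_n$) is \emph{non-separating}, so its complement cannot have a genus-one component. The separating curve that actually gets pinched is the boundary of a neighborhood of the zero, which encloses the whole handle region; its flat length is of a different order than $\delta_n$.

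Second, and more seriously, your rescaling argument for $T_n$ is self-contradictory: you assert that $T_n$ is obtained by rescaling a \emph{fixed} square torus with a slit of \emph{definite} size, and then immediately conclude that $T_n$ is biholomorphic to a square torus with a \emph{vanishingly small} slit. It cannot be both. If the genus-one piece were conformally a fixed torus-with-boundary for all $n$, that alone would not tell you which closed torus it compactifies to in the Deligne--Mumford limit, nor why it is smooth rather than further degenerate. In particular, there is no reason the limit torus should be the standard square torus; its complex structure depends on the order $m$ of the zero and the splitting $m'+m''=m$.

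The paper resolves this by constructing the limit torus $T$ explicitly as an \emph{infinite-volume} flat surface: one bubbles a unit square handle onto the flat cone of angle $(m+1)2\pi$ (a meromorphic differential on $\bP^1$), obtaining a meromorphic differential on a torus with a pole at infinity. Then one takes $H_n\subset X_n$ to be the set of points at flat distance at most $\sqrt{\epsilon_n}$ from the handle, and rescales by $\epsilon_n^{-1}$; this gives a flat-isometric (hence conformal) injection of $H_n$ onto the ball of radius $\epsilon_n^{-1/2}$ in $T$, which \emph{exhausts} $T$ as $n\to\infty$. Combined with the obvious flat isometry from $C_n=X_n\setminus\overline{H_n}$ onto a region exhausting $Y$, this produces conformal injections $X_\infty\setminus\overline{U_n}\hookrightarrow X_n$ with $U_n$ shrinking to the node, which is precisely the neighborhood-basis description of the Deligne--Mumford topology. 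This sidesteps the plumbing-coordinate estimate you flag as the main obstacle. The $h$-equivariant statement follows by the same argument, since applying $h$ only changes the shape of the handle and not the structure of the construction.
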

\begin{figure}[h]
\includegraphics[scale=0.5]{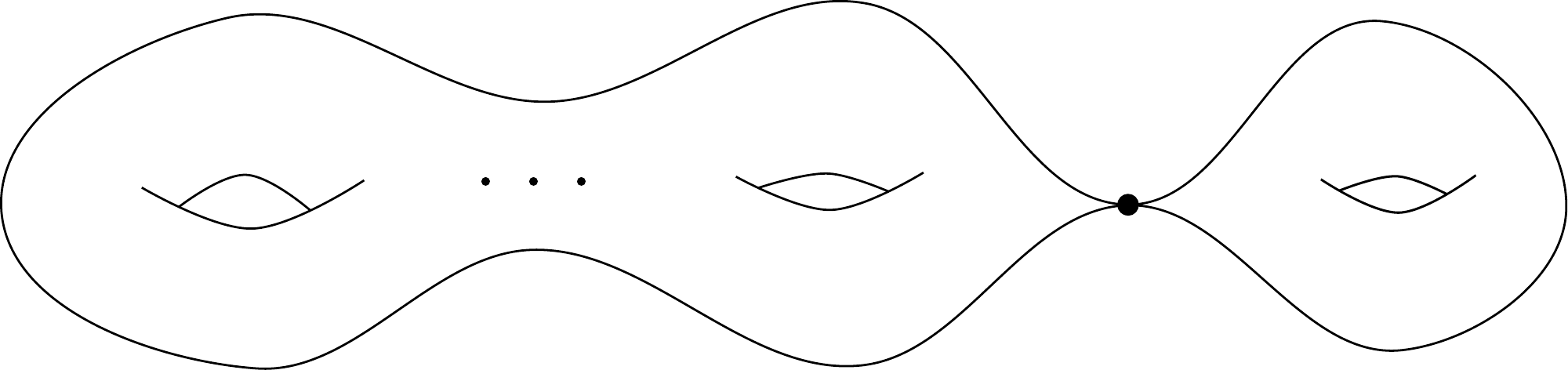}
\caption{The limit $X_\infty$ of $X_n$ is a noded Riemann surface, equal to the genus $g-1$ surface $Y$ (left) glued to a torus (right) at a node. The limit of the $\omega_n$ is supported on $Y$, where it is equal to $\omega.$}
\label{F:noded}
\end{figure}

One of the key technical points is that the torus is  smooth, i.e., no additional curve on the torus gets pinched. The proof is deferred to section \ref{S:conv}. We now proceed to limits of \good planes, proving a version of Proposition \ref{P:limit_is_good}.

\begin{prop}\label{P:limit_is_good2}
We use the notation and set up of the previous proposition. Let $\iota:Y\to X_\infty$ denote the natural inclusion. Suppose $P_n$ is a \good plane at $(X_n,\omega_n)$, and $P_n$ converges to a plane $P\subset H^1(X_\infty,\bR)$. Then $\iota^*(P)$ is a \good plane at $(Y,\omega)$, unless $\iota^*(P)$ is the zero subspace.
\end{prop}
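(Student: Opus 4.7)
The plan is to exploit two structural facts about the Deligne--Mumford limit $X_\infty = Y\cup T$. First, because the node is separating, both the cohomology and the Hodge filtration split along the two components:
\[H^1(X_\infty,\bR) = H^1(Y,\bR)\oplus H^1(T,\bR), \qquad H^{1,0}(X_\infty) = H^{1,0}(Y)\oplus H^{1,0}(T).\]
The first identity follows from Mayer--Vietoris for the normalization $Y\sqcup T\to X_\infty$ (the only identification is of a pair of points, contributing nothing in degree one), and the second holds because a section of the dualizing sheaf, restricted to a smooth component with a single allowed simple pole, must have vanishing residue there and hence be holomorphic. Under this splitting $\iota^*$ is projection onto $H^1(Y,\bR)$, and the $SL(2,\bR)$-action on $H^1(X_\infty,\bR)$ arising as a limit of the actions on $H^1(X_n,\bR)$ preserves the decomposition, so $\iota^*\circ h = h\circ\iota^*$.

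The second ingredient is continuity of the Hodge bundle across the DM boundary created by bubbling. For each $h\in SL(2,\bR)$, Proposition~\ref{P:DM_limit} applied to $h(X_n,\omega_n)$ gives $hX_n\to hY\cup hT$, and the DM analogue of Lemma~\ref{L:glimit} gives $hP_n\to hP$. Since $(hP_n)_\bC\cap H^{1,0}(hX_n)$ is a line for every $n$, extracting a subsequence of unit (e.g.\ unit-area) vectors produces a nonzero
\[\alpha_h\in (hP)_\bC\cap H^{1,0}(hX_\infty) = (hP)_\bC\cap\bigl(H^{1,0}(hY)\oplus H^{1,0}(hT)\bigr),\]
which I decompose as $\alpha_h = \beta_h + \gamma_h$ with $\beta_h\in H^{1,0}(hY)$ and $\gamma_h\in H^{1,0}(hT)$.

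Assume $\iota^*(P)\neq 0$; I claim $\beta_h\neq 0$ for every $h$. Otherwise $\alpha_h=\gamma_h\in H^{1,0}(hT)$, and since $hP$ is a real plane, $\bar{\alpha}_h\in (hP)_\bC\cap H^{0,1}(hT)$. Because $H^{1,0}(hT)\cap H^{0,1}(hT)=0$, the pair $\alpha_h,\bar{\alpha}_h$ is $\bC$-linearly independent and hence spans $(hP)_\bC$, forcing $hP\subset H^1(hT,\bR)$ and thus $\iota^*(P)=0$, a contradiction. So $\beta_h=\iota^*(\alpha_h)$ is a nonzero element of $(h\iota^*(P))_\bC\cap H^{1,0}(hY)$.

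Finally, I would rule out $\dim_\bR\iota^*(P)=1$. In that case write $P=\spn_\bR(v_1,v_2)$ with $v_1\in H^1(T,\bR)$ and $\iota^*(v_2)\in H^1(Y,\bR)$ nonzero. Any $\alpha=av_1+bv_2\in P_\bC\cap H^{1,0}(X_\infty)$ has $Y$-part $b\cdot\iota^*(v_2)$; since $H^{1,0}(Y)\cap H^1(Y,\bR)=0$, membership in $H^{1,0}(Y)$ forces $b=0$, so $\alpha\in H^1(T,\bC)$ and $\beta_I=0$, contradicting the previous step. Hence $\iota^*(P)$ is two-dimensional, and the previous paragraph shows $(h\iota^*(P))^{1,0}\neq 0$ for every $h$; by complex conjugation $(h\iota^*(P))^{0,1}\neq 0$ also, and both are one-dimensional for dimension reasons, so $\iota^*(P)$ is \good at $(Y,\omega)$. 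The main obstacle is justifying the extension of the Hodge bundle and of the $SL(2,\bR)$-action across the DM boundary produced by a shrinking square handle, which is the technical content deferred to Section~\ref{S:conv}.
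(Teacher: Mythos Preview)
Your argument is correct and follows the same approach as the paper: use continuity of the Hodge filtration across the node (Lemma~\ref{L:planelimit2}) together with the commutation $\iota^*\circ h = h\circ\iota^*$ (Lemma~\ref{L:glimit2}) to produce, for each $h$, a nonzero holomorphic class in $(hP)_\bC$ and then project it to $H^{1,0}(hY)$. The paper's proof is a single sentence invoking those two lemmas, whereas you supply the case analysis that the paper leaves implicit: namely, the verification that the holomorphic class $\alpha_h$ cannot lie entirely in the torus summand $H^{1,0}(hT)$ (else $\alpha_h,\bar\alpha_h$ would span $(hP)_\bC$ inside $H^1(hT,\bC)$, forcing $\iota^*(P)=0$), and the exclusion of $\dim_\bR\iota^*(P)=1$ via the observation that a nonzero real class has no nonzero complex multiple in $H^{1,0}(Y)$. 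These steps are genuinely needed to pass from $(hP)_\bC\cap H^{1,0}(X_\infty^h)\neq 0$ to $(h\iota^*(P))_\bC\cap H^{1,0}(hY)\neq 0$, so your write-up is a legitimate fleshing-out of the paper's terse proof rather than a different argument.
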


A point of the {augmented Teichm\"uller space, resp.} Deligne-Mumford compactification is a {marked noded Riemann surface, resp.} noded Riemann surface. Topologically, a noded Riemann surface can be thought of as the result of pinching a set of disjoint curves on a surface of genus $g$. Roughly speaking, a marking on noded Riemann surface a map from the reference surface $S$ which collapses some disjoint curves. See \cite{Ba} Section 5.2 for the precise definition.  {The Deligne-Mumford compactification is obtained from augmented Teichm\"uller space by quotient by the mapping class group (or, equivalently, by forgetting markings)}. If, as is the case for $X_\infty$, only separating curves are pinched, the noded Riemann surface is said to have \emph{geometric genus} $g$.

Pinching a set of homologically trivial curves does not effect (co)homology, and so the bundle $H^1$ over {Teichm\"uller} space (whose fiber over a Riemann surface $X$ is $H^1(X,\bR)$) extends continuously to points in the {augmented Teichm\"uller space} where only separating curves have been pinched.

The bundle $H^{1,0}$ also extends continuously to all {(marked)} noded Riemann surfaces, but for us it will be easier to think only about the Riemann surfaces of geometric genus $g$. The fiber of $H^{1,0}$ over a noded Riemann surface $X$ of geometric genus $g$ is isomorphic to the space of unit area holomorphic one forms on the disconnected smooth Riemann surface obtained by separating all the nodes of $X$. In particular, $H^{1,0}(X_\infty)$ is isomorphic to the direct sum of $H^{1,0}(Y)$ and $H^{1,0}$ of the torus which is glued to $Y$.

The bundle $H^{1,0}$ is a continuous subbundle of $H^1$ over the locus of possibly noded {(marked)} Riemann surfaces of geometric genus $g$.

We will show that there are shrinking neighborhoods $U_n$ of the node on $X_\infty$, and conformal maps $f_n:X_\infty\setminus U_n\to X_n$ such that $(f_n)^*(\omega_n)\to\omega$. (This is the definition of convergence of Abelian differentials, as we will recall in the next section.) Again we say that planes $P_n\subset H^1(X_n,\bR)$ converge to a plane $P\subset H^1(X_\infty, \bR)$ if the maps $f_n$ can be chosen so that $(f_n)^*(P_n)\to P$. Again, if the surfaces are marked, these maps will be required to commute with the markings. 

We need the following version of Lemma \ref{L:planelimit}.

\begin{lem}\label{L:planelimit2}
Suppose $P_n\subset H^1(X_n,\bR)$ is a sequence of planes converging to $P\subset H^1(X_\infty, \bR)$. If $(P_n)_\bC\cap H^{1,0}(X_n)\neq \{0\}$ for all $n$, then $P_\bC\cap H^{1,0}(X_{\infty})\neq \{0\}$.
\end{lem}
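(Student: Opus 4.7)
The strategy is the exact analogue of the proof of Lemma \ref{L:planelimit}, but carried out in the partial compactification of moduli space rather than in moduli space itself. The crucial ingredient, already recorded just above the statement of the lemma, is that $H^{1,0}$ extends to a continuous complex subbundle of $H^1$ over the locus of (possibly noded) Riemann surfaces of geometric genus $g$; in particular, holomorphicity of $(1,0)$-classes survives under passage to the Deligne-Mumford limit when no non-separating curves are pinched.

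First I would fix, once and for all, a norm $\|\cdot\|$ on $H^1(X_\infty,\bC)$, and pick conformal identifications $f_n : X_\infty \setminus U_n \to X_n$ of the type already discussed, shrinking the neighborhood $U_n$ of the node. Using the $f_n^*$, one transports the convergence $P_n \to P$ to a convergence in the Grassmannian of $H^1(X_\infty,\bR)$, and likewise transports vectors in $(P_n)_\bC$ into the fixed space $H^1(X_\infty,\bC)$. For each $n$ choose a nonzero $\alpha_n \in (P_n)_\bC \cap H^{1,0}(X_n)$; scale so that $\|f_n^*\alpha_n\| = 1$. By compactness of the unit sphere in $H^1(X_\infty,\bC)$, pass to a subsequence along which $f_n^*\alpha_n$ converges to some $\alpha \in H^1(X_\infty,\bC)$ with $\|\alpha\| = 1$. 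In particular $\alpha \neq 0$.

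Two things remain to check, each of which is essentially immediate. First, $\alpha \in P_\bC$: this follows from $P_n \to P$ in the Grassmannian, together with the fact that $\alpha_n \in (P_n)_\bC$ and that linear subspace membership is a closed condition. Second, $\alpha \in H^{1,0}(X_\infty)$: this follows from $\alpha_n \in H^{1,0}(X_n)$ combined with the continuity of the subbundle $H^{1,0} \subset H^1$ over the geometric genus $g$ locus of the Deligne-Mumford compactification. Together these two statements give $\alpha \in P_\bC \cap H^{1,0}(X_\infty) \setminus \{0\}$, which is the conclusion.

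The only potential obstacle is making sure the normalization really produces a nonzero limit, i.e.\ that we cannot lose all the ``mass'' of $\alpha_n$ into the vanishing cycle near the node. This is prevented precisely by the fact, already set up, that only a separating curve is pinched in the degeneration $X_n \to X_\infty$: the cohomology of $X_n$ is canonically identified (via $f_n^*$) with $H^1(X_\infty,\bR)$ in the limit, so no cohomology classes are collapsed, and the unit-norm normalization is compatible with the continuous extension of both $H^1$ and $H^{1,0}$. With this normalization in hand, the argument is a line-by-line translation of the proof of Lemma \ref{L:planelimit}.
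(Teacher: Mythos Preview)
Your proof is correct and follows the same approach as the paper, which simply says the result ``follows as in Lemma \ref{L:planelimit}, because $H^{1,0}$ is a continuous subbundle of $H^1\otimes \bC$ over the locus of possibly noded (marked) Riemann surfaces of geometric genus $g$.'' You have essentially unpacked this one-line argument into the explicit compactness-and-continuity steps it implicitly relies on.
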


\begin{proof}
This follows as in Lemma \ref{L:planelimit}, because $H^{1,0}$ is a continuous subbundle of $H^1\otimes \bC$ over the locus of possibly noded {(marked)} Riemann surfaces of geometric genus $g$.
\end{proof}

We also need the following version of Lemma \ref{L:glimit}, whose proof is also deferred to the end of the next section.

\begin{lem}\label{L:glimit2}
Suppose $P_n\subset H^1(X_n,\bR)$ is a sequence of planes converging to $P\subset H^1(X_\infty, \bR)$. Fix any {$\newh \in SL(2,\bR)$}. Then $\newh  (P_n)$ converges to $P'$, where $P'$ satisfies $\newh (\iota^*(P))=\iota^*(P')$.
\end{lem}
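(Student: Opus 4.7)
The plan is to mimic the proof of Lemma \ref{L:glimit} (the smooth analogue) by transporting the convergence-witnessing maps via the affine diffeomorphism induced by $\newh$, and then extract the claim about $\iota^*$ by working on the principal component. Fix conformal maps $f_n : X_\infty \setminus U_n \to X_n$ with $U_n$ a shrinking neighborhood of the node such that $(f_n)^*\omega_n \to \omega$ and $(f_n)^*P_n \to P$. Applying Proposition \ref{P:DM_limit} to $\newh(X_n,\omega_n)$, the rotated surfaces converge in the Deligne-Mumford topology to $\newh X_\infty = \newh Y \cup_{\mathrm{node}} T'$. Let $\phi_\newh^{X_n} : X_n \to \newh X_n$ and $\phi_\newh^Y : Y \to \newh Y$ denote the affine diffeomorphisms with derivative $\newh$ in translation charts, and define maps on the $\newh Y$ component by
\[
g_n \;:=\; \phi_\newh^{X_n} \circ f_n \circ (\phi_\newh^Y)^{-1} : \newh Y \setminus \phi_\newh^Y(U_n \cap Y) \longrightarrow \newh X_n.
\]

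Each $g_n$ is conformal, and the convergence $(f_n)^*\omega_n \to \omega$ on the $Y$-side transports under the affine identifications to $(g_n)^*(\newh\omega_n) \to \newh\omega$ on $\newh Y$. I would then extend the $g_n$ to maps $G_n : \newh X_\infty \setminus V_n \to \newh X_n$ witnessing the full Deligne-Mumford convergence; such $G_n$ exist by Proposition \ref{P:DM_limit}, and after modification by an isotopy supported near the node we may take them to agree with $g_n$ on $\newh Y$. By compactness of the Grassmannian, after passing to a subsequence, $\newh(P_n) \to P'$ for some plane $P' \subset H^1(\newh X_\infty, \bR)$.

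The key computation is that, with $\newh$ acting on cohomology through $(\phi_\newh)^{-1,*}$ (as in Section \ref{S:HTdefn}),
\[
(g_n)^*(\newh P_n) \;=\; \bigl((\phi_\newh^{X_n})^{-1} \circ g_n\bigr)^* P_n \;=\; \bigl(f_n \circ (\phi_\newh^Y)^{-1}\bigr)^* P_n \;=\; (\phi_\newh^Y)^{-1,*} \bigl((f_n)^* P_n\bigr).
\]
As $n \to \infty$ the right-hand side converges to $(\phi_\newh^Y)^{-1,*}(\iota^* P) = \newh(\iota^* P)$. Since $g_n$ and $G_n$ agree on $\newh Y$, their induced maps on cohomology agree in the limit on the $\newh Y$ component, so $\iota^*(P') = \newh(\iota^* P)$. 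Full (not merely subsequential) convergence then follows because any two subsequential limits share the same $\newh Y$-projection, and on the torus factor $T'$ the limit abelian differential is zero so the torus side of the limit is pinned down by the combinatorial degeneration data.

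The main obstacle I expect is the bookkeeping on the torus side: one must justify that the $g_n$ can be coherently extended across the node to produce $G_n$ witnessing the Deligne-Mumford convergence on all of $\newh X_\infty$, and that the choice of torus $T'$ does not affect the argument. This is essentially a technical matter since the limiting differential on $T'$ is $0$ and $\iota^*$ kills the $T'$-component anyway; the substantive content of the lemma is the equivariance calculation on $\newh Y$ displayed above, which is a direct transcription of the smooth argument in Lemma \ref{L:glimit}.
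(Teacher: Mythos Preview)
Your approach is essentially correct and closely parallels the paper's: both reduce the lemma to the equivariance relation $\iota^* \circ \newh = \newh \circ \iota^*$, which you establish via the displayed computation with the affine maps $\phi_\newh$. The paper's version is somewhat slicker in that it avoids building and extending explicit conformal maps. Instead it invokes directly the triviality of $H^1_\bR$ over augmented Teichm\"uller space: since $\newh$ acts trivially on fibers of this trivial bundle (Section~\ref{S:HTdefn}), the plane $\newh P_n$ is identified with the \emph{same} element of $H^1(S,\bR)$ as $P_n$ under the trivialization, and so $\newh P_n$ automatically converges to the parallel transport $P'$ of $P$. The equivariance then follows from the splitting $H_1(X_n^\newh,\bR)\cong H^1(Y^\newh,\bR)\oplus \bR^2$ valid for large $n$, together with the triviality of the $\newh$-action on fibers.

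There is one weak spot in your write-up, namely the argument for \emph{full} (rather than subsequential) convergence. You correctly observe that any two subsequential limits agree after applying $\iota^*$, but your claim that the torus component of $P'$ is ``pinned down by the combinatorial degeneration data'' because the limiting differential vanishes on $T'$ is not a valid argument: the vanishing of the Abelian differential on $T'$ says nothing about which two-plane in $H^1(\newh X_\infty,\bR)$ arises as a limit of the $\newh P_n$. This gap is harmless for the downstream application (only $\iota^*(P')$ is ever used in Proposition~\ref{P:limit_is_good2}), and it disappears entirely if you argue as the paper does: once markings are fixed and the bundle is trivialized, $\newh P_n$ and $P_n$ are literally the same sequence in $H^1(S,\bR)$, so convergence of one is convergence of the other.
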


%

\begin{proof}[\eb{Proof of Proposition \ref{P:limit_is_good2}.}]
This follows directly from Lemmas \ref{L:planelimit2} and \ref{L:glimit2}.
\end{proof}

\begin{proof}[\eb{Proof of Proposition \ref{P:induct2}.}]
Let $\cH$ be a connected component of the stratum $\cH(2g-2)$, and let $\cH'$ be the connected component of $\cH(2g-4)$ given by Proposition \ref{P:minadj}. We are assuming that every translation surface in $\cH$ has $k$ \good planes.

Let $(Y,\omega)\in \cH'$. Let $(X_1, \omega_1)$ be the result of bubbling a square handle from $(Y,\omega)$, and let $(X_n, \omega_n)$ be a sequence of translation surfaces obtained from the same bubbling construction but letting the size of the square handle go to zero. By Proposition \ref{P:DM_limit}, given $h\in SL(2,\bR)$, one has that $X_n^{h}$ converges in the Deligne-Mumford compactification to  $Y^{h}$ joined at a node to a torus, and $\omega_n^{h}$ converges to the Abelian differential which is equal to $\omega^{h}$ on $Y^{h}$ and is zero on the torus.

Let $P_n^{(1)}, \ldots, P_n^{(k)}$ be orthogonal \good planes at $(X_n,\omega_n)$. After passing to a subsequence, we may assume that $P_n^{(i)}$ converges to a plane $P^{(i)}$ at $X_\infty$.

The Hodge inner product extends continuously to nodal surfaces of genus $g$. (The definition given in section \ref{SS:hodge} is equally valid for nodal surfaces of genus $g$. One could also use the symplectic pairing here.) The planes $P^{(i)}$ are Hodge orthogonal at $X_\infty$, and hence at most one can be contained in $\ker(\iota^*)$. Without loss of generality, assume that $P^{(i)}$ is not contained in $\ker(\iota^*)$ for $i<k$.

Then Proposition \ref{P:limit_is_good2} gives that the set of $\iota^*(P^{(i)})$ for $1\leq i\leq k-1$ are orthogonal \good planes at $(Y,\omega)$.
\end{proof}

\begin{rem}
In this proof we see the theoretical possibility of loosing a \good plane to $\ker(\iota^*)$ in the inductive step. It is precisely this which has stopped us from resolving Problem \ref{p1} below.
\end{rem}

\section{Proof of Proposition \ref{P:DM_limit}}\label{S:conv}

We will proceed in two steps. First we will build the candidate $X_\infty$ using flat geometry, and then we will prove that $X_n$ converges to this $X_\infty$. Once the proof of Proposition \ref{P:DM_limit} is complete, we will sketch the proof of Lemma \ref{L:glimit2}.

The techniques and results in this section are not new: compare to \cite[Section 4]{EKZbig}, which uses work of Rafi \cite{R}.

\subsection{Construction of $X_\infty$.} The proposition states that $X_\infty$ is $Y$ joined at a node to a torus. We will specify the torus using flat geometry. More precisely, we will construct an infinite volume flat structure which is given by a meromorphic Abelian differential on a torus, and it is this torus that we will join to $Y$ at a node to construct $X_\infty$. The flat infinite volume torus will be obtained by bubbling a handle from an infinite volume flat surface on the sphere.

We begin by constructing the infinite volume flat surface on the sphere, with a zero of some order $m>0$. Take $m+1$ copies of the upper half plane, and $m+1$ copies of the lower half plane. Glue them to each other along the segments $(-\infty,0]$ and $[0,\infty)$, to construct an infinite volume flat surface with one singularity of cone angle $(m+1)2\pi$. This is in fact a meromorphic Abelian differential on the sphere; there is a pole at infinity.

\begin{figure}[h]
\includegraphics[scale=0.29]{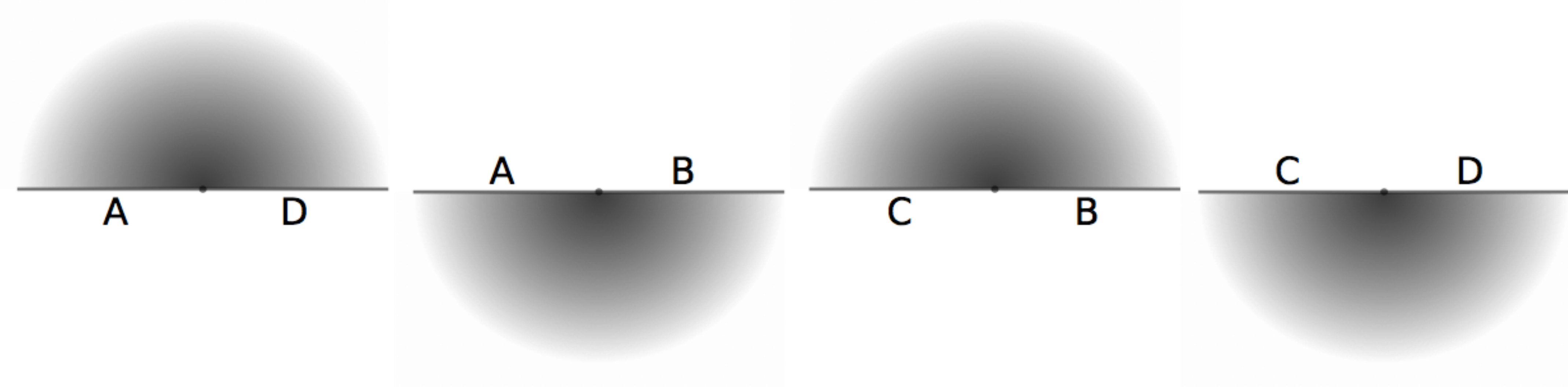}
\caption{When $m=1$, we take two copies each of the upper and lower half planes, and glue them together along the lines $(-\infty,0]$ and $[0,\infty)$ to create a single cone point with angle $4\pi$. (The shading indicates either the upper or lower half plane, and each labelled line segment is either $(-\infty,0]$ or $[0,\infty)$.) }
\label{F:zero1}
\end{figure}

From this flat structure we bubble a square handle of size 1 to obtain an infinite volume flat structure on a torus $T$. The handle should be bubbled in the same way as on $Y$: when splitting the zero there is a finite choice of how to split the zero into two zeros of order $m'$ and $m''$ with $m'+m''=m$. This choice should be made the same way for the bubbling of the infinite volume sphere as for $Y$.

\begin{figure}[h]
\includegraphics[scale=0.45]{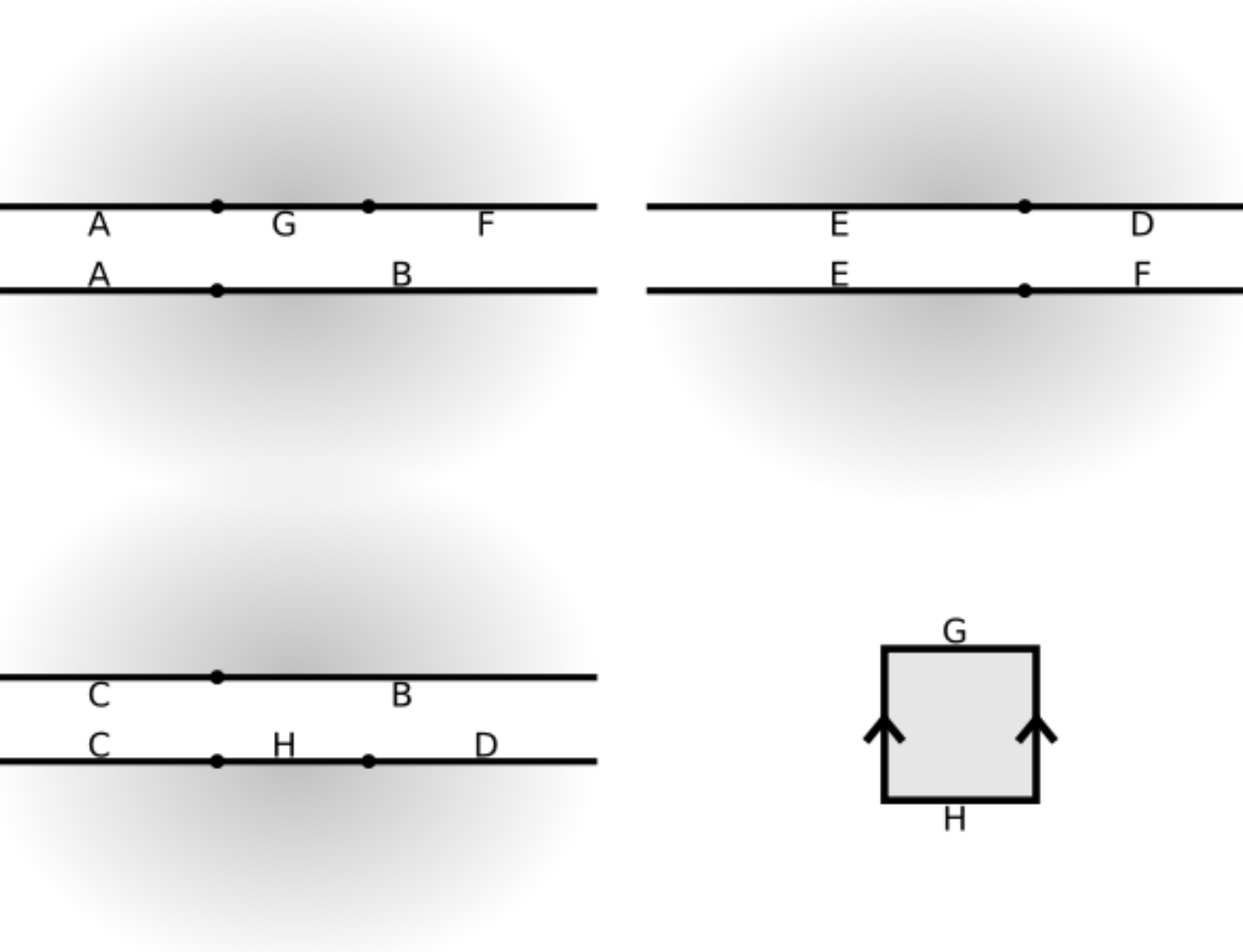}
\caption{
An infinite volume flat torus with zero of order $m=2$. (There is a pole at infinity which is not visible but which gives infinite volume.) The segments H and G have length one, and the remaining segments are bi-infinite.}
\label{F:InfiniteTorus}
\end{figure}

Now define $X_\infty$ to be $Y$ glued to the torus $T$ at a node placed at the zero of $Y$ and the pole of $T$.

\subsection{Convergence to $X_\infty$.} For this section our primary reference is \cite{Ba}, Sections 5.2-5.3, where Bainbridge discusses the topology on the {augmented Teichm\"uller space}, the Deligne-Mumford compactification and the bundle of stable Abelian differentials, recalling results of Abikoff \cite{Abi}, Bers \cite{Bers}, and others as well as results which are well known but not written in the literature.

The topology near $X_\infty$ may be described as follows. Let $U\subset X_\infty$ be any small neighborhood of the node in $X_\infty$. Consider the set $V_U$ of all nodal Riemann surfaces into which there is a holomorphic injection from $X_\infty-\overline{U}$. The $V_U$ form a neighborhood basis at $X_\infty$.

\begin{proof}[\eb{Proof of Proposition \ref{P:DM_limit}.}]
We will show that $X_n$ converges to $X_\infty$ by showing it is in the neighborhood $V_U$ above, where $U$ gets smaller as $n$ gets larger.

Indeed, suppose that on $X_n$ the size of the square handle is $\e_n$, where $\e_n\to 0$ as $n\to\infty$. Let $H_n\subset X_n$ be the set of points whose flat distance to the square handle (with respect to $\omega_n$) is less than $\sqrt{\e_n}$, and let $C_n=X_n\setminus \overline{H_n}$.

First we claim that there is a conformal map $\phi_n^H:H_n\to T$ for $n$ large enough. Indeed, this map simply rescales the flat metric by $\e_n^{-1}$ so that the handle now has size $1$, and then maps as a flat isometry into $T$ with the infinite volume flat metric given above. The image of $\phi_n$ is all points whose flat distance from the handle on $T$ is at most $\sqrt{\e_n^{-1}}$.

Next we claim that there is a conformal map $\phi_n^C:C_n\to Y$. Indeed, $C_n$ with the $\omega_n$ metric is flat isometric to the complement of a neighborhood of the zero in $(Y,\omega)$. This is because bubbling a handle into $(Y, \omega)$ is a \emph{local} surgery, and does not change the flat metric outside of a neighborhood of the zero of $\omega$.

Let $U_n=X_\infty\setminus \overline{\phi_n^C(C_n)\cup \phi_n^H(H_n)}.$ Then $U_n$ is a neighborhood of the node in $X_\infty$, and the intersection of all the $U_n$ is equal to the node. Define a map $\psi_n:X_\infty\setminus \overline{U_n} \to X_n$ by setting $\psi_n(z)=(\phi_n^C)^{-1}(z)$ for $z\in \phi_n^C(C_n)$, and $\psi_n(z)=(\phi_n^H)^{-1}(z)$ for $z\in \phi_n^H(H_n)$. This map is by definition a conformal injection $X_\infty\setminus \overline{U_n} \to X_n$, showing that $X_n\to X_\infty$.

The claim that $\omega_n$ converges to the Abelian differential $\omega$ supported on $Y\subset X_\infty$ is immediate from the description of the topology on the bundle of stable Abelian differentials given in \cite{Ba} (see p.60, under ``Long cylinders").

This proof applies equally well after applying a fixed $\newh \in SL(2,\bR)$ (the only difference is that the handles are no longer squares).
\end{proof}

\begin{proof}[\eb{Proof of Lemma \ref{L:glimit2}.}]
Let {$\newh \in SL(2,\bR)$}. In fact, the arguments in the previous three paragraphs above shows that there is a well defined nodal Riemann surface $X_\infty^{\newh }$. Here $X_\infty^{\newh }$ is the result of acting linearly by $\newh $ on the infinite volume flat structure on $X_\infty$ and taking the underlying Riemann surface structure. 
In particular, $X_n^{\newh }$ converges to $X_\infty^{\newh }$ in the Deligne-Mumford compactification. Here we are using the notation $\newh (X_n, \omega_n)=(X_n^{\newh }, \omega_n^{\newh })$ and $\newh (Y, \omega)=(Y^{\newh }, \omega^{\newh })$.

The bundle $H^1_\bR$ is trivial over augmented Teichm\"uller space. Set $P'$ to be the parallel transport of $P$ to $X_\infty$. (We are implicitly fixing markings here.)
It is now straightforward to see that $\newh (P_n)$ converges to  $P'$.  

Indeed, for $n$ sufficiently large depending on $h$, there is a natural identification of $H_1(X_n^\newh, \bR)$ with $H^1(Y^\newh, \bR)\oplus \bR^2$

Note that $H^1(X_\infty^{\newh }, \bR)=H_1(X_\infty^{\newh }, \bR)^*$, and the map $\iota^*$ is restriction to $H_1(Y^{\newh }, \bR)$. Using the above identification, we also have restriction maps $\iota^*:H^1(X_n^{\newh },\bR) \to H^1(Y^{\newh }, \bR)$, for $n$ sufficiently large.

Since the action of $\newh $ on the homology group $H_1(X_n,\bR)$ is trivial
 we see that $\iota^*\circ\newh =\newh \circ\iota^*.$ 

Thus $\iota^*(\newh  P_n)= \newh \iota^*(P_n)$ for all $n$ sufficiently large. Taking a limit we get that $\iota^*(P')=\newh \iota^*(P)$.
\end{proof}

\section{Open problems}\label{S:open}

Here we list three problems arising directly from our work.

\begin{prob}\label{p1}
Show that for all connected components of strata in genus at least 3 there is at least one surface whose only \good plane is the tautological plane.
\end{prob}

A solution to Problem \ref{p1} would help in the study of primitive but not algebraically primitive Teichm\"uller curves, as would a solution to the following.

\begin{prob}\label{p2}
Let $\cQ$ be a connected component of a stratum of quadratic differentials, and let $\cM$ be the corresponding affine invariant submanifold of translation surfaces that are holonomy double covers of half-translation surfaces in $\cQ$. Show that except in certain small genus cases (such as Prym loci) there is a translation surface in $\cM$ whose only \good plane is the tautological plane.
\end{prob}

Problems \ref{p1} and  \ref{p2} are closely related to the ongoing study of the Kontsevich-Zorich cocycle.

\begin{prob}\label{p3}
In each connected component of each stratum in genus $g$ at least 3, give an explicit open set of translation surfaces without $g$ orthogonal \good planes.
\end{prob}

The simplest case is probably the hyperelliptic connected component of $\cH(4)$. Even in that case, we do not know how to do Problem \ref{p3}, however we do not feel that it is completely hopeless. For example, a study of the second derivative of the period matrix might be helpful.

\bibliography{mybib}{}
\bibliographystyle{amsalpha}

\end{document}